\newcommand{\E}{\mathbb{E}}
\newcommand{\N}{\mathbb{N}}
\renewcommand{\P}{\mathbb{P}}
\newcommand{\R}{\mathbb{R}}
\newcommand{\sign}{\operatorname{sign}}
\newcommand{\1}{\mathbbm{1}}
\newcommand{\diag}{\operatorname{diag}}
\newcommand{\half}{\mbox{$\frac 1 2 $}}
\newcommand{\quarter}{\mbox{$\frac 1 4$}}
\newtheorem{corollary}{Corollary}
\newtheorem{lemma}{Lemma}
\newtheorem{proposition}{Proposition}
\newtheorem{observation}{Observation}
\newtheorem{assumption}{Assumption}
\newtheorem{theorem}{Theorem}
\theoremstyle{definition}
\theoremstyle{remark}
\newtheorem{remark}{Remark}
\begin{document}
 
\begin{frontmatter}
 
\title{A piecewise deterministic scaling limit of Lifted Metropolis-Hastings in the Curie-Weiss model}
\runtitle{Piecewise deterministic scaling limit of Lifted Metropolis-Hastings}

\begin{aug}
  \author{\fnms{Joris}  \snm{Bierkens}\corref{}\thanksref{t2} \ead[label=e1]{j.bierkens@warwick.ac.uk}}
  \and	
  \author{\fnms{Gareth} \snm{Roberts}\thanksref{t2} \ead[label=e2]{gareth.o.roberts@warwick.ac.uk}}
  
  \thankstext{t2}{Both authors would like to thank the EPSRC for support under grants EP/D002060/1 (CRiSM) and EP/K014463/1 (iLike)}
  
  \runauthor{J. Bierkens and G. Roberts}
  
  \affiliation{University of Warwick, Department of Statistics}
  
  \address{Coventry, CV4 7AL, United Kingdom \\ \printead{e1,e2}}
\end{aug}

\begin{abstract}
In \cite{TuritsynChertkovVucelja2011} a non-reversible Markov Chain Monte Carlo (MCMC) method on an augmented state space was introduced, here referred to as Lifted Metropolis-Hastings (LMH). A scaling limit of the magnetization process in the Curie-Weiss model is derived for LMH, as well as for Metropolis-Hastings (MH). The required jump rate in the high (supercritical) temperature regime equals $n^{1/2}$ for LMH, which should be compared to $n$ for MH. At the critical temperature the required jump rate equals $n^{3/4}$ for LMH and $n^{3/2}$ for MH, in agreement with experimental results of \cite{TuritsynChertkovVucelja2011}.
The scaling limit of LMH turns out to be a non-reversible piecewise deterministic exponentially ergodic `zig-zag' Markov process.
\end{abstract}

\begin{keyword}[class=MSC]
\kwd[Primary ]{60F05}
\kwd[; secondary ]{65C05}
\end{keyword}

\begin{keyword}
\kwd{weak convergence}
\kwd{Markov chain Monte Carlo}
\kwd{piecewise deterministic Markov process}
\kwd{phase transition}
\kwd{exponential ergodicity}
\end{keyword}


\end{frontmatter}

\section{Introduction}

Markov chain Monte Carlo (MCMC, \cite{Metropolis1953}) has been extremely successful in providing a generic simulation framework with wide-ranging applications. It works by composing collections of move types, each which leave the target distribution of interest invariant. Invariance is assured through detailed balance making the building blocks of MCMC reversible, giving advantages in terms of accessibility to mathematical investigation and practicality of implementation. Yet there is a growing interest in the phenomenon that, where comparative mathematical studies are possible, non-reversible Markov chains often outperform their reversible counterparts. 

A fundamental approach for obtaining non-reversible Markov processes is by `lifting' or `augmenting' the state space. In this case the states are augmented by one or more additional variables, which can often be interpreted as e.g. momentum or direction.
Let us provide a (non-exhaustive) overview of the literature concerning this approach.
In \cite{Chen1999}  it is shown that `lifting' a Markov chain may result in a reduced mixing time that is at best of order square root of the original mixing time. In order to achieve this improvement, a non-reversible lifting is required.
In \cite{DiaconisHolmesNeal2000} a simple reversible Markov chain on a finite state space of size $n$, is lifted to a non-reversible Markov chain on a space of size $2n$. It is shown that this construction reduces the mixing time of the chain from $O(n^2)$ to $O(n)$.  In \cite{TuritsynChertkovVucelja2011} a non-reversible lifting of Metropolis-Hastings is introduced, which we will refer to as \emph{Lifted Metropolis-Hastings (LMH)}, and applied to the Ising model on a fully connected graph (i.e. the \emph{Curie-Weiss model}). In a numerical experiment it appears that at the critical temperature, the `relaxation time' is reduced from $O(n^{1.43})$ to $O(n^{0.85})$, where $n$ denotes the number of spins. 
The `lifting approach' is not the only way of obtaining non-reversible Markov processes. For interesting approaches to constructing and analyzing the benefits of non-reversible Markov processes, see e.g. \cite{Hwang1993}, \cite{Sun2010}, \cite{Chen2013}, \cite{ReyBelletSpiliopoulos2015}, \cite{Bierkens2015}, \cite{Lelievre2013}, \cite{DuncanLelievrePavliotis2015}.

It is the goal of this paper to shed light on the general theory of lifted non-reversible Markov chains, and in particular on the recent experimental result of \cite{TuritsynChertkovVucelja2011} mentioned above. This is achieved by obtaining a scaling limit of Lifted Metropolis-Hastings, in its application to the Curie-Weiss model. This scaling limit may be compared to a similar scaling limit for (classical) Metropolis-Hastings.

Initiated by \cite{RobertsGelmanGilks1997}, a large amount of understanding of particular Markov Chain Monte Carlo (MCMC) algorithms has been obtained by identifying a suitable diffusion limit: Given a sequence of Markov chains of increasing size or dimensionality $n$, a suitable scaling of the state space and of the amount of steps per unit time interval is determined. As $n$ tends towards infinity, the scaled Markov process converges (in the sense of weak convergence on Skorohod path space) to a diffusion process, which is often of an elementary nature. In particular the required number of Markov chain transitions per unit time interval as a function of $n$ provides a fundamental measure of the speed of the Markov chain.

The Curie-Weiss model is an exchangeable probability distribution on $\{-1, 1\}^n$ which depends on two parameters, the `external field' $h$, and the `inverse temperature' $\beta$ (which describes interactions between components). At inverse temperature $\beta = 1$ the model undergoes a phase transition. This results in differences in behaviour for $\beta < 1$, $\beta = 1$ and $\beta > 1$, and we shall analyse the behaviour of both standard Metropolis-Hastings and Lifted Metropolis-Hastings in the first two of these cases.  
We will determine a scaling limit of Metropolis-Hastings (\cite{Metropolis1953, Hastings1970}) as well as Lifted Metropolis-Hastings (\cite{TuritsynChertkovVucelja2011}) for the magnetization in the Curie-Weiss model, for the supercritical temperature regime ($\beta < 1$) with external field $h \in \R$ and at the critical temperature ($\beta = 1$), without external field ($h = 0$). To obtain these results we depend on non-asymptotic concentration results of \cite{Chatterjee2007},  \cite{ChatterjeeDey2010}. The case of subcritical temperature ($\beta > 1$) is typically more difficult to analyse. In this paper we do not obtain results for this case because, as far as we know, no non-asymptotic concentration results are available.

In both the supercritical and critical cases our results demonstrate that the lifted chain convergence is an order of magnitude faster (as a function of dimension $n$) than the regular Metropolis-Hastings algorithm as is summarised below.

\begin{table}[ht]
\begin{tabular}{ | l || c | c | } \hline  & $\beta <1$ & $\beta =1$ \\ \hline \hline Metropolis-Hastings & $O(n)$ (Theorem~\ref{thm:diffusion-limit-mh-high-temperature}) & $O(n^{3/2})$ (Theorem~\ref{thm:diffusion-limit-mh-critical-temperature})\\
 Lifted Metropolis-Hastings & $O(n^{1/2})$ (Theorem~\ref{thm:LMH-supercritical-temperature}) &  $O(n^{3/4})$ (Theorem~\ref{thm:LMH-critical-temperature}) \\ \hline\end{tabular}
 \label{tab:results}
\end{table}

The results for Lifted Metropolis-Hastings are surprising since it would require at least $O(n)$ iterations to update each component. Therefore magnetization is converging significantly more rapidly than should be expected. This is explained by the strong concentration of the magnetization around its  mean, so that only relatively few spin updates suffice to update the magnetization at the appropriate scale.

As is common for weak limits of light-tailed Metropolis-Hasting algorithms, the limits of Metropolis-Hastings for Curie-Weiss are simple univariate diffusion processes. Interestingly, in determining the scaling limit of Lifted Metropolis-Hastings we obtain an elementary Markov process which has so far received only very limited attention in the literature. The limit process is a one-dimensional piecewise deterministic Markov process which we will refer to as a \emph{zig-zag process}: the process moves at a deterministic and constant speed, until it switches direction and moves at the same speed but in the opposite direction. The switching occurs at a time-inhomogeneous rate which is directly related to the derivative of the density function of its stationary distribution. We analyse this zig-zag process in some detail, establishing in particular exponential ergodicity under mild conditions.

Piecewise deterministic Markov processes were first introduced in \cite{Davis1984} and discussed extensively in  \cite{Davis1993}. A zig-zag process with a constant switching rate appears in \cite{Goldstein1951} and is discussed further in \cite{Kac1974}. 
A similar process on the torus is discussed in \cite{MicloMonmarche2013}. In \cite{PetersDeWith2012} a multi-dimensional version of the zig-zag process with space inhomogeneous switching rates is introduced and used for MCMC. This MCMC method is analysed in detail in \cite{Bouchard-Cote2015}. \cite{Monmarche2014} discusses the use of the one-dimensional zig-zag process for simulated annealing, and in \cite{Monmarche2014b} the exponential ergodicity of the zig-zag process is discussed in case of bounded switching rates. In \cite{Fontbona2012} and \cite{Fontbona2015} the exponential ergodicity of the one-dimensional zig-zag process is discussed under seemingly stronger conditions than in the current paper.

\subsection{Outline}
This article is structured as follows. In Section~\ref{sec:preliminaries} we briefly provide the necessary background on Metropolis-Hastings (MH), Lifted Metropolis-Hastings (LMH, based on \cite{TuritsynChertkovVucelja2011}), and the Curie-Weiss model, including the relatively recent non-asymptotic concentration results of \cite{Chatterjee2007}, \cite{ChatterjeeDey2010}. Also we briefly describe the basic random walk Markov chain, used as proposal chain in MH and LMH, in terms of magnetization.

In Section~\ref{sec:diffusion-limit-mh} we consider the time evolution of the magnetization as $n \rightarrow \infty$ for MH applied to the Curie-Weiss model. By a suitable rescaling of both space (i.e. the magnetization variable) and time (i.e. the jump rate within a unit time interval) we obtain a diffusion limit of this stochastic process, at supercritical temperature $\beta < 1$ (Theorem~\ref{thm:diffusion-limit-mh-high-temperature}) as well as at critical temperature, $\beta = 1$, $h = 0$ (Theorem~\ref{thm:diffusion-limit-mh-critical-temperature}). It is perhaps not very surprising that this diffusion limit corresponds to the Langevin diffusion of the known limiting distribution, i.e. a Gaussian distribution in case $\beta < 1$ and non-Gaussian in case $\beta = 1$. Also not surprisingly, the required jump rate to obtain this diffusion limit is in line with known results on mixing time for Curie-Weiss (\cite{Levin2009b}, \cite{Ding2009}): if $\beta < 1$, the required speed up is equal to a factor $n$, while for $\beta = 1$ and $h = 0$ the required speed up is equal to a factor $n^{3/2}$.

The main result of this paper may be found in Section~\ref{sec:LMH-CW-scaling-limit}. In this section  we obtain the scaling limit of the magnetization for LMH applied to Curie-Weiss, again for the cases $\beta < 1$ (Theorem~\ref{thm:LMH-supercritical-temperature}) and $\beta = 1$, $h = 0$ (Theorem~\ref{thm:LMH-critical-temperature}).
The limiting process is a piecewise deterministic Markov process which has received only a small amount of attention in the mathematics and physics literature. Naturally it has the same limiting invariant distribution as for Metropolis-Hastings. Interestingly, the required time scaling for LMH corresponds exactly to the square root of the time scaling for MH: this time scaling is $n^{1/2}$ for $\beta < 1$ and $n^{3/4}$ for $\beta = 1$, $h = 0$. This `square root' improvement is in agreement with the theory developed in \cite{Chen1999} and in line with the numerical result of \cite{TuritsynChertkovVucelja2011}.

In Section~\ref{sec:limiting-process} the limiting `zig-zag' process is analysed in detail. First the process is generalized to allow for general one-dimensional invariant distributions satisfying mild conditions on the derivative of the density function. In particular it is established that this process is a non-explosive process satisfying the strong Markov property (Proposition~\ref{prop:construction}) which is weak Feller (Proposition~\ref{prop:feller}) but not strong Feller (Observation~\ref{obs:not-strong-Feller}). A straightforward relation between the switching  rate of the process and its invariant distribution is obtained in Proposition~\ref{prop:invariant-measure}. Furthermore under a reasonable strengthening of the assumptions exponential ergodicity is obtained (Theorem~\ref{thm:exponential-ergodicity}).

Section~\ref{sec:proofs} is devoted to the proofs of the mentioned results, including necessary technical lemmas. In particular let us mention the following intermediate results: The Feller property is obtained by a coupling argument (Proof of Proposition~\ref{prop:feller}), it is shown that all compact sets are `petite sets' for the zig-zag process (Lemma~\ref{lem:compact-sets-are-petite}), and a Foster-Lyapunov function is constructed to establish exponential ergodicity (Lemma~\ref{lem:lyapunov}).

\section{Preliminaries}
\label{sec:preliminaries}

\subsection{Metropolis-Hastings (MH)}
For a given proposal transition probability matrix $Q$ and target distribution $\pi$ on a discrete state space $S$, the Metropolis-Hastings transition probabilities are given for $x \neq y$ by
\begin{equation} \label{eq:MH} P(x,y) = \left\{ \begin{array}{ll} Q(x,y) \left ( 1 \wedge \frac{\pi(y) Q(y,x)}{\pi(x) Q(x,y)} \right) \quad & \mbox{if $Q(x,y) > 0$,} \\
                       0 \quad & \mbox{otherwise}.
                      \end{array} \right. \end{equation} 
As is well established, the transition probabilities $P$ are reversible with respect to $\pi$, i.e. $\pi(x) P(x,y) = \pi(y) P(y,x)$ for all $x, y \in S$. This implies that $\pi$ is invariant for $P$.

\subsection{Lifted Metropolis-Hastings (LMH)}
\label{sec:LMH}
In \cite{TuritsynChertkovVucelja2011} a non-reversible chain $T$ is constructed with invariant distribution $\half (\pi, \pi)$ on an augmented state space $S^{\sharp} := S \times \{-1, +1 \}$. The set $S \times \{+1 \}$ is called the \emph{forward replica} and $S \times \{-1 \}$ is called the \emph{backward replica}.
The construction is as follows. Let $T^{+}(x,y)$ and $T^{-}(x,y)$, $x \neq y$, denote probabilities satisfying the following conditions:
\begin{itemize}
 \item[(i)] $T^{\pm}(x,y) \geq 0$ for all $x,y \in S$, $x \neq y$;
 \item[(ii)] $\sum_{y \in S, y \neq x} T^{\pm}(x,y) \leq 1$ for all $x \in S$;
 \item[(iii)] skew detailed balance: 
 \begin{equation} \label{eq:skew-detailed-balance} \pi(x) T^{+}(x,y) = \pi(y) T^{-}(y,x) \quad \mbox{for all $x \neq y$.}\end{equation}
\end{itemize}

The values $T^{+}$ and $T^{-}$ will represent transition probabilities within the respective replicas. 
Define transition probabilities between the forward and backward replicas by
\begin{equation} \label{eq:transitions-between-replicas}
\begin{aligned}  T^{-+}(x) & = \max \left(0, \sum_{\substack{y \in S \\ y \neq x}} \left( T^{+}(x,y) - T^{-}(x,y) \right) \right), \\
T^{+-}(x) & = \max \left(0, \sum_{\substack{y \in S \\ y \neq x}} \left(T^{-}(x,y) - T^{+}(x,y)\right) \right). 
\end{aligned}
\end{equation}
Finally for $x \in S$, define $T^{+}(x,x)$ and $T^{-}(x,x)$ by 
\[ T^{+}(x,x) = 1 - T^{+-}(x) - \sum_{\substack{y \in S \\ y \neq x}} T^{+}(x,y) \quad \mbox{and} \quad T^{-}(x,x) = 1  - T^{-+}(x) - \sum_{\substack{y \in S \\ y \neq x}} T^{-}(x,y),\]
so that the rows sums equal 1.
Define the full matrix of transition probabilities $T$ with state space $S \times \{ -1, +1 \}$ by
\begin{align*} T((x,-1), (y,-1)) & = T^{-}(x,y), \quad & T((x,+1), (y,+1)) & = T^{+}(x,y), \\
T((x,+1),(x,-1)) & = T^{+-}(x), \quad & T((x,-1),(x,+1)) & =T^{-+}(x), \\
T((x,-1),(y,+1)) & = 0 \quad \mbox{for $x \neq y$}, \quad & T((x,+1),(y,-1)) & = 0 \quad \mbox{for $x \neq y$},
\end{align*}
or in matrix notation,
\[ T = \begin{pmatrix} T^{+} & \diag(T^{+-}) \\
                 \diag(T^{-+}) & T^{-}  \end{pmatrix}.\]

A few important properties of $T$ are stated in the following proposition. Most importantly, the marginal invariant distribution of $T$ over $S$ is equal to $\pi$.

\begin{proposition}
\label{prop:turitsyn}
Let $T$ be as constructed above. Then
\begin{itemize}
 \item[(i)] $T$ is a Markov transition matrix,
 \item[(ii)] $T$ has invariant probability distribution on $S \times \{-1,+1 \}$ equal to $\half (\pi,\pi)$, and
 \item[(iii)] if, for some $x,y \in S$, $T^{+}(x,y) \neq T^{-}(x,y)$, then $T$ is not reversible with respect to its invariant distribution.
\end{itemize}
\end{proposition}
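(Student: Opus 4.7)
The proof has three parts, and the main work is in (i) and (ii); part (iii) will then follow quickly.

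For \textbf{(i)}, the plan is to verify nonnegativity of every entry and that the rows sum to $1$. The off-diagonal entries and the between-replica transition probabilities $T^{\pm}(x,y)$ and $T^{+-}(x), T^{-+}(x)$ are manifestly nonnegative by assumption~(i) and by the $\max$ with $0$ in \eqref{eq:transitions-between-replicas}. The only entries that require argument are the diagonals $T^{+}(x,x)$ and $T^{-}(x,x)$. I would split into cases depending on which branch of the $\max$ is active: e.g.\ for $T^{+}(x,x)$, if $T^{+-}(x)=0$ then nonnegativity reduces to assumption~(ii) on $T^{+}$, and if $T^{+-}(x)=\sum_{y\neq x}(T^{-}(x,y)-T^{+}(x,y))$ then the $T^{+}$ terms cancel and one is left with $1-\sum_{y\neq x}T^{-}(x,y)\geq 0$ by assumption~(ii) applied to $T^{-}$. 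Row sums equal $1$ by the very definition of the diagonal entries.

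For \textbf{(ii)}, I would verify column-wise that $\tfrac12(\pi,\pi)T = \tfrac12(\pi,\pi)$. Fix $y \in S$ and consider the column $(y,+1)$: since $T((x,-1),(y,+1))=0$ for $x\neq y$, the contribution reduces to
\[
\sum_x \tfrac12 \pi(x) T^{+}(x,y) + \tfrac12\pi(y) T^{-+}(y).
\]
Split off the $x=y$ term in the first sum, and on the $x\neq y$ terms apply the skew detailed balance \eqref{eq:skew-detailed-balance} to replace $\pi(x)T^{+}(x,y)$ by $\pi(y)T^{-}(y,x)$; factor out $\pi(y)$. Substituting the definition of $T^{+}(y,y)$ leaves
\[
\tfrac12\pi(y)\Bigl[1 + \sum_{x\neq y}\bigl(T^{-}(y,x)-T^{+}(y,x)\bigr) + T^{-+}(y) - T^{+-}(y)\Bigr].
\]
The crucial algebraic observation is that by \eqref{eq:transitions-between-replicas}, $T^{+-}(y)-T^{-+}(y)$ equals $\sum_{x\neq y}(T^{-}(y,x)-T^{+}(y,x))$ (using $\max(0,a)-\max(0,-a)=a$ for any real $a$), so the bracket collapses to $1$ and we obtain $\tfrac12\pi(y)$ as required. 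The column $(y,-1)$ is symmetric, swapping the roles of $+$ and $-$.

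For \textbf{(iii)}, I would argue by contradiction. Reversibility of $T$ with respect to $\tfrac12(\pi,\pi)$ would in particular force $\pi(x)T((x,+1),(y,+1))=\pi(y)T((y,+1),(x,+1))$, i.e.\ $\pi(x)T^{+}(x,y)=\pi(y)T^{+}(y,x)$ for all $x\neq y$. Combining this with the skew detailed balance \eqref{eq:skew-detailed-balance} gives $\pi(y)T^{+}(y,x)=\pi(y)T^{-}(y,x)$, hence $T^{+}(y,x)=T^{-}(y,x)$ for all $x\neq y$, contradicting the hypothesis.

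The only mild subtlety is the case analysis in part (i) and the cancellation identity $T^{+-}-T^{-+}=\sum(T^{-}-T^{+})$ that drives part (ii); everything else is bookkeeping. I would expect the clean handling of the $\max$'s in \eqref{eq:transitions-between-replicas} to be the main, and really only, place where care is needed.
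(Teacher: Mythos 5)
Your proof is correct and complete; the paper simply declares these facts ``immediate'' and omits all details, so your verification supplies exactly the standard computations one would write out, and your key cancellation $T^{+-}(x)-T^{-+}(x)=\sum_{y\neq x}\bigl(T^{-}(x,y)-T^{+}(x,y)\bigr)$ is precisely the identity recorded in Remark~\ref{rem:freedom-in-replica-switches}. The only point worth flagging is that part (iii) divides by $\pi(y)$ and hence implicitly assumes $\pi(y)>0$, which is the standard convention for Metropolis--Hastings targets and holds in the Curie--Weiss application.
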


\begin{proof}
The proofs of these results are immediate.
\end{proof}

\begin{remark} \label{rem:TCV-construction}
Once $T^{+}$ and $T^{-}$ are picked, $T$ is fixed according to the definitions above. However, there is still freedom in choosing $T^+$ and $T^-$ satisfying~\eqref{eq:skew-detailed-balance}. In \cite{TuritsynChertkovVucelja2011} and here, $T$ is fixed as follows. Suppose that $P$ is a transition matrix on $S$ that is reversible with respect to $\pi$, and let $\eta : S \rightarrow \R$. Now define the off-diagonal components of $T^{\pm}$ by
\begin{align*} T^{+}(x,y) & = \left\{ \begin{array}{ll} P(x,y) \quad & \mbox{if $\eta(y) \geq \eta(x)$,} \\
 0 \quad & \mbox{if $\eta(y) < \eta(x)$}, \end{array} \right. \quad \mbox{and} \\
 T^{-}(x,y) & = \left\{ \begin{array}{ll}  0 \quad & \mbox{if $\eta(y) > \eta(x)$,} \\ P(x,y) \quad & \mbox{if $\eta(y) \leq \eta(x)$.} \\
\end{array} \right.
\end{align*}
Then $T^{\pm}$ satisfies the skew detailed balance condition~\eqref{eq:skew-detailed-balance}. This way, Lifted Metropolis-Hastings creates a non-reversible lifted chain $T$ out of a given reversible chain $P$, which has (marginally) the same invariant distribution as $P$.
In particular, this construction may be applied to the Metroplis-Hastings transition probabilities $P$ given by~\eqref{eq:MH}.
\end{remark}

\begin{remark}
\label{rem:freedom-in-replica-switches}
There is some freedom in the choice of transition probabilities between replicas, i.e. $T^{+-}$, $T^{-+}$. In general, transition probabilities between replicas need to satisfy the conditions
\begin{equation} \label{eq:condition-replica-transitions}T^{+-}(x) - T^{-+}(x) = \sum_{\substack{y \in S \\ y \neq x}} \left[ T^{-}(x,y) - T^{+}(x,y) \right],\end{equation}
in order for $\half(\pi, \pi)$ to be invariant.
Here, as in \cite{TuritsynChertkovVucelja2011}, we choose~\eqref{eq:transitions-between-replicas}. See \cite{SakaiHukushima2013} for other variants.
\end{remark}

\subsection{The Curie-Weiss model}

Let $S^n := \{-1, 1\}^n$ and let target invariant distributions $\pi^n$ on $S^n$ be given by \begin{equation}
\label{eq:inv-distribution-hamiltonian} \pi^n(x) = Z_n \exp(-\beta H^n(x)),\end{equation}
with 
\begin{equation} \label{eq:hamiltonian} H^n(x) = -\frac 1 {2n} \sum_{i,j=1}^n  x_i x_j - h \sum_{i=1}^n x_i,\end{equation}
where $(Z_n)$ are normalization constants, $\beta$ is a parameter usually referred to as inverse temperature, and $h \in \R$ a parameter known as the external magnetization. 
As remarked in the introduction, we will later specialize to the case $0 \leq \beta \leq 1$, but for now we allow general $\beta \geq 0$. 
Define the \emph{magnetization} $m^n : \{-1,1\}^n \rightarrow \R$ by $m^n(x) = \frac 1 n \sum_{i=1}^n x_i$. The crucial observation for the Curie-Weiss model is that the Hamiltonian may be expressed in terms of $m$, as
\begin{equation} \label{eq:hamiltonian-magnetization} H^n(x) = - n \left( \half (m^n(x))^2 + h m^n(x) \right).\end{equation}
We may consider $m^n$ and other mappings from $S^n$ into $\R$ as random variables on the probability space $(S^n, \pi^n)$; in particular we will suppress the dependence on $x \in S^n$ where this does not cause confusion.

For $0 \leq \beta \leq 1$, as well as for $\beta > 1$ and $h \neq 0$, there exists a unique $m_0 = m_0(h, \beta)$ around which the magnetization will concentrate. The value of $m_0$ can be obtained as the unique minimizer of
\begin{equation} \label{eq:curie-weiss-functional} i(m) = -\left(\half \beta m^2 + \beta h m \right) + \frac{1-m}{2} \log(1- m) + \frac{1 + m}{2} \log(1 +m), \quad m \in (-1,1).\end{equation}
This value $m_0$ satisfies 
\begin{equation} \label{eq:curie-weiss}\beta m_0 + \beta h = \half \log \frac{1+ m_0}{1 - m_0},\end{equation}
or equivalently $m_0 = \tanh \left( \beta (m_0 + h) \right)$. In case $\beta > 1$ and for $h$ sufficiently small there exist two other solutions to~\eqref{eq:curie-weiss} but these are not global minima of~\eqref{eq:curie-weiss-functional}. As $h \rightarrow 0$, $m_0(h,\beta) \rightarrow 0$. For $h = 0$ or $\beta = 0$, $m_0 = 0$. For $h \neq 0$, the sign of $m_0$ is equal to the sign of $h$. These results are well known, see e.g. \cite[Section IV.4]{Ellis2006}. 

As $n \rightarrow \infty$, the random variables $m^n$ will be increasingly concentrated around $m_0$:
\begin{proposition}[Concentration for Curie-Weiss]
\label{prop:concentration-Curie-Weiss}
\begin{itemize}
\item[(i)] For all $\beta \geq 0$, $h \in \R$ and $t \geq 0$,
\[ \pi^n \left(|m^n - \tanh(\beta (m^n + h))| \geq \frac{\beta}{n} + \frac{t}{\sqrt{n}} \right) \leq 2 \exp\left( -\frac{t^2}{4(1+\beta)} \right).\]
\item[(ii)] If $h = 0$ and $\beta = 1$, then there exists a constant $c > 0$ such that for any $n \in \N$  and $t \geq 0$,
\[ \pi^n(|m^n| \geq t^{1/4}) \leq 2 e^{-c n t}.\]
\end{itemize}
\end{proposition}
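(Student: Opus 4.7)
Part (i) is a classical application of Chatterjee's exchangeable-pair concentration technique, so the natural plan is to reduce to Theorem~1.5 of \cite{Chatterjee2007}. Construct the exchangeable pair $(X,X')$ by drawing $X \sim \pi^n$, choosing an index $I$ uniformly from $\{1,\dots,n\}$, and replacing $X_I$ by an independent draw from the conditional law $\pi^n(\,\cdot\, | X_{-I})$. A direct calculation shows $\P(X_I' = 1 | X_{-I}) - \P(X_I' = -1 | X_{-I}) = \tanh\bigl(\beta h + \tfrac{\beta}{n} \sum_{j \ne I} X_j\bigr)$. Setting the antisymmetric statistic $F(X,X') := \tfrac{n}{2}(m^n(X) - m^n(X'))$, one computes
\[
\E[F(X,X') \mid X] = \tfrac{1}{2}\Bigl( m^n(X) - \tfrac{1}{n}\sum_{i=1}^n \tanh\bigl(\beta h + \tfrac{\beta}{n}\sum_{j \ne i} X_j\bigr)\Bigr),
\]
and replacing the inner average by $\tanh(\beta(m^n(X)+h))$ introduces a deterministic error bounded by $\beta/n$ (by Lipschitzness of $\tanh$). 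Since $|F|\le 1$, Chatterjee's inequality produces a sub-Gaussian tail with variance proxy $O(1+\beta)$; the displayed bound in (i) then follows by absorbing the $\beta/n$ bias into the threshold.

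For part (ii) I would argue by direct Laplace-type estimates on the magnetization density, following the critical analysis of \cite{ChatterjeeDey2010}. At $\beta=1, h=0$, the law of $m^n$ is
\[
\pi^n(m^n = m) = Z_n^{-1} \binom{n}{n(1+m)/2} \exp\bigl(\tfrac{n m^2}{2}\bigr), \qquad m \in \tfrac{2}{n}\Z \cap [-1,1].
\]
Stirling gives $\binom{n}{n(1+m)/2} = \exp(n\phi(m) + O(\log n))$ with $\phi(m) = -\tfrac{1+m}{2}\log\tfrac{1+m}{2} - \tfrac{1-m}{2}\log\tfrac{1-m}{2}$, and the key Taylor expansion at the critical point is
\[
\phi(m) + \tfrac{m^2}{2} \;=\; \log 2 \;-\; \tfrac{m^4}{12} \;+\; O(m^6).
\]
Hence near zero the exponent decays like $-nm^4/12$, while away from zero it is bounded above by $\log 2$ minus a positive constant. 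Combining these two regimes, summing over the $n+1$ lattice values of $m^n$, and choosing the constant $c$ small enough to absorb the polynomial Stirling factor and the sextic remainder, gives the claim $\pi^n(|m^n|\ge s) \le 2 e^{-cns^4}$ for $s \ge 0$, and substituting $s = t^{1/4}$ yields~(ii).

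The main obstacle in part (i) is tracking the discrepancy between $\tfrac{1}{n}\sum_{j\ne I} X_j$ and $m^n(X)$ inside the $\tanh$: this is essentially free by Lipschitzness but must be handled cleanly so that the resulting bias of order $1/n$ appears precisely as the additive $\beta/n$ in the threshold, not inside the Gaussian tail. In part (ii) the subtlety is obtaining the quartic decay \emph{uniformly} in $s$: away from $m=0$ the inequality is straightforward, but near zero one needs enough control of the Stirling error and the sextic remainder to guarantee $\phi(m)+m^2/2 \le \log 2 - c m^4$ on an entire neighbourhood of the origin. Both parts are, at bottom, careful instantiations of the general machinery of \cite{Chatterjee2007,ChatterjeeDey2010}, so once the above reductions are made, the arguments can largely be finished by citation.
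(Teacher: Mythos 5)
The paper's own proof of this proposition consists of two citations: part (i) is \cite[Proposition 1.3]{Chatterjee2007} verbatim, and part (ii) is deduced from \cite[Proposition 5]{ChatterjeeDey2010}. Your part (i) is a correct reconstruction of exactly the argument behind the cited result: the Gibbs-resampling exchangeable pair, the antisymmetric $F$ proportional to $m^n(X)-m^n(X')$, the identification $\E[F\mid X]=\tfrac12\bigl(m^n-\tfrac1n\sum_i\tanh(\cdot)\bigr)$, the $\beta/n$ Lipschitz bias, and the variance proxy $\Delta(X)\le(1+\beta)/(2n)$ fed into Chatterjee's abstract theorem with $B=0$; carried through, this reproduces the constant $4(1+\beta)$ exactly, so there is nothing to add beyond tightening ``$O(1+\beta)$'' to the actual constant.

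Part (ii) is where you genuinely depart from the source: Chatterjee and Dey prove their Proposition 5 by a higher-order Stein / exchangeable-pair argument, whereas you propose a direct Laplace--Stirling computation on the law of $m^n$. That route is viable and more elementary, and your worry about the sextic remainder is unfounded: since $\phi(m)+m^2/2-\log 2=-\sum_{k\ge 2}\frac{m^{2k}}{2k(2k-1)}$, every higher-order term has a favourable sign, so $\phi(m)+m^2/2\le\log 2-m^4/12$ holds exactly on all of $(-1,1)$. The genuine gap is the claim that the polynomial Stirling factor can be ``absorbed by choosing $c$ small''. A bound of the form $\pi^n(|m^n|\ge s)\le Cn^{q}e^{-ns^4/12}$ with $q>0$ does \emph{not} imply $2e^{-cns^4}$ for any fixed $c>0$ uniformly in $n$: in the regime $1\lesssim ns^4\lesssim\log n$ the target $2e^{-cns^4}$ is genuinely below $1$ (so the bound is not vacuous there), yet $n^{q}e^{-ns^4/12}$ can exceed it for every fixed $c$, and shrinking $c$ only weakens the bound you can prove for large $ns^4$ without rescuing this intermediate range. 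To close the gap you must make the prefactor $n$-independent: use the two-sided estimate $\binom{n}{n(1+m)/2}\asymp 2^n e^{n(\phi(m)-\log 2)}/\sqrt{n(1-m^2)}$ in the numerator \emph{and} a matching lower bound on the normalising sum (the lattice points with $|m|\lesssim n^{-1/4}$ already contribute $\gtrsim 2^n n^{1/4}$), so that the polynomial factors cancel and one obtains $\pi^n(|m^n|\ge s)\le Ce^{-ns^4/12}$ with an absolute constant $C$. From there the stated form with leading constant $2$ and a smaller $c$ follows by the elementary case split on whether $ns^4$ exceeds a fixed threshold. With that repair your part (ii) is correct.
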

\begin{proof}
Claim (i) is \cite[Proposition 1.3]{Chatterjee2007}. Claim (ii) is a simple consequence of \cite[Proposition 5]{ChatterjeeDey2010}.
\end{proof}
 
\begin{remark}
In case $\beta > 1$ and $h \neq 0$ there is a unique global minimum of~\eqref{eq:curie-weiss-functional}. However, to develop scaling limits for Metropolis-Hastings and Lifted Metropolis-Hastings we require non-asymptotic concentration results as given in Lemmas~\ref{lem:concentration-high-temperature} and~\ref{lem:concentration-critical-temperature}, which are based upon Proposition~\ref{prop:concentration-Curie-Weiss}. Even though Proposition~\ref{prop:concentration-Curie-Weiss} includes the case $\beta > 1$, the proof of Lemma~\ref{lem:concentration-high-temperature} seems to depend crucially on the assumption that $\beta < 1$. Therefore we have to restrict our attention to $0 \leq \beta < 1$ (along with the critical case $h =0, \beta = 1$).
\end{remark}

As quantity of interest (which is a necessary ingredient in the formulation of the lifted Markov chain, see Remark~\ref{rem:TCV-construction}), we will consider suitably shifted and renormalized magnetization,
\[ \eta^n(x) := n^{\gamma} (m^n(x) - m_0), \quad x \in S^n.\]
In view of Proposition~\ref{prop:concentration-Curie-Weiss}, for $\eta^n$ to be of $O(1)$ as $n \rightarrow \infty$, we will need to choose $\gamma = 1/2$ for $0 \leq \beta < 1$ and $\gamma = 1/4$ for $\beta = 1$. For smaller choices of $\gamma$ any limiting random variable would be trivially concentrated at a single point, whereas for larger choices of $\gamma$ a suitable limiting random variable would not exist. The precise concentration statements we will use are given in Lemmas~\ref{lem:concentration-high-temperature} (for $0 \leq \beta < 1$) and~\ref{lem:concentration-critical-temperature} (for $\beta = 1$).
For now we will only assume that $\gamma \in (0, 1)$.

Rather than using $x$ as state space variable, it will be useful to express all quantities and probabilities in terms of $\eta^n(x)$.  For example, the Hamiltonian $H^n$ can be re-expressed in terms of $\eta^n(x)$ by $H^n(x) = c_n + \Phi^n(\eta^n(x))$, where the constants $c_n$ do not depend on $\eta^n$, and
\begin{equation}
 \label{eq:Phi}
 \Phi^n(\eta) := - \half n^{1-2 \gamma} \eta^2 - n^{1-\gamma}(m_0 + h) \eta, \quad \eta \in \R.
\end{equation}

\subsection{Random walk on the discrete hypercube}
\label{sec:randomwalk}
Consider the Markov transition probabilities on $S^n = \{-1,1\}^n$ given by
\[ \mathrm{Prob}(x \rightarrow y) = \left \{ \begin{array}{ll} \frac 1 n \quad & \mbox{when $y = F_k(x)$ for some $k = 1, \dots, n$,} \\
 0 \quad & \mbox{otherwise}. \end{array} \right.
\]
Here $F_k : S^n \rightarrow S^n$ denotes the operation of flipping the sign of $x(k)$, i.e. 
\[ [F_k(x)]_i := \left\{ \begin{array}{ll} x_i \quad & \mbox{for $i \neq k$,}\\
              - x_i \quad & \mbox{for $i = k$.}
                       \end{array} \right. \]
In words, a transition consists of flipping the sign of $x_i$, where $i$ is selected uniformly among $\{ 1, \dots, n \}$. This Markov chain corresponds to a random walk on the discrete hypercube $S^n$.
We will express the above transition probabilities in terms of $\eta = \eta^n(x)$ rather than $x$. For $\eta = \eta^n(x)$, a fraction $\half(1 - m^n(x)) = \half(1 - m_0 - n^{-\gamma} \eta)$ of entries of $x$ has value $-1$, and similarly a fraction $\half(1 + m_0 + n^{-\gamma} \eta)$ has value $+1$. If one entry of $x$ flips, there is a change in $\eta^n$ by $2 n^{\gamma - 1}$. Therefore, for $\eta \in X^n := \eta^n(S^n)$, we define
\begin{equation}
 \label{eq:transition-probs-eta-rw}
 Q^n(\eta, \eta \pm 2n^{\gamma - 1}) := \half ( 1 \mp (m_0 + n^{-\gamma} \eta)),
\end{equation}
and $Q^n(\eta, \zeta) := 0$ for all $\eta, \zeta \in X^n$ for which $|\zeta - \eta| \neq 2n^{\gamma - 1}$. Defined this way, $Q^n$ is a matrix of transition probabilities on $X^n$.

\section{Diffusion limit of Metropolis-Hastings applied to Curie-Weiss}
\label{sec:diffusion-limit-mh}

In this section we consider the limit of Metropolis-Hastings for the Curie-Weiss model as $n \rightarrow \infty$ in terms of the scaled magnetization $\eta^n(x) = n^{\gamma} (m^n(x) - m_0)$. 
 In terms of $\eta^n$, the invariant distribution is given by
\begin{equation} \label{eq:inv-distribution-eta}
\mu^n(\eta) := (\pi^n \circ (\eta^n)^{-1})(\eta) \propto \exp(- \beta \Phi^n(\eta)), \quad \eta \in X^n.\end{equation}
Using the random walk transition probabilities $Q^n$ and the target distribution $\mu^n$ for the Curie-Weiss model, we obtain for the MH transition probabilities 
\begin{equation} \label{eq:MH-definition}  P^n(\eta, \eta \pm 2 n^{\gamma - 1}) = Q^n(\eta, \eta \pm 2 n^{\gamma - 1}) \left( 1 \wedge \exp (\beta \{\Phi^n(\eta) - \Phi^n(\eta \pm 2 n^{\gamma - 1})\}) \right),
\end{equation}
for $\eta \in X^n$, with $\Phi^n$ given by~\eqref{eq:Phi}.

Let $Y^n$ denote the stationary continuous time Markov chain that jumps at rate $n^{\alpha}$ according to $P^n$ with stationary distribution $\mu^n  \propto \exp(-\beta \Phi^n(\eta))$. 
Let $D([0, \infty), \R)$ denote the space of cadlag paths in $\R$, equipped with the Skorohod topology. 
We are now in a position to state our first two results concerning the high-dimensional limit of $Y^n$ in the supercritical and critical cases respectively.

\begin{theorem}[Diffusion limit of Metropolis-Hastings in the supercritical temperature regime]
\label{thm:diffusion-limit-mh-high-temperature}
Suppose $0 \leq \beta < 1$ and $h \in \R$. Suppose $Y^n$ jumps at rate $n$, i.e. we let $\alpha = 1$ in the above definition of $Y^n$. Let the spatial scaling in the transition probabilities $P^n$ be determined by $\gamma = \half$. Then $Y^n$ converges weakly in $D([0,\infty),\R)$ to $Y$, where $Y$ is the stationary Ornstein-Uhlenbeck process satisfying the stochastic differential equation
\[ d Y(t) = -2  l(h,\beta) Y(t) \ d t + \sigma(h,\beta) \ d B(t), \quad Y(0) \sim \mu,\]
and with stationary distribution $\mu$, where $(B(t))$ is a standard Brownian motion, $\mu$ is the centred normal distribution with variance 
\begin{equation} \label{eq:supercritical-variance} v(h, \beta) := \frac{1 - m_0^2(h,\beta)}{1 - \beta(1 - m_0^2(h,\beta))}\end{equation}  and with
\[ \sigma(h, \beta) := 2 \sqrt{1 - |m_0(h,\beta)|} \quad \mbox{and} \quad l(h,\beta) := \frac 1 {1 + |m_0(h,\beta)|} - \beta(1-|m_0(h,\beta)|).\]
\end{theorem}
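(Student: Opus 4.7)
The plan is to establish the weak convergence via generator convergence on a core plus convergence of the stationary initial laws, in the spirit of Ethier and Kurtz's standard results for martingale problems of continuous-time Markov chains. The generator of $Y^n$ acts on $f \in C_c^3(\R)$ by
\[
\mathcal{L}^n f(\eta) = n P^n_+(\eta)\bigl[f(\eta + 2n^{-1/2}) - f(\eta)\bigr] + n P^n_-(\eta)\bigl[f(\eta - 2n^{-1/2}) - f(\eta)\bigr],
\]
with $P^n_{\pm}(\eta) := P^n(\eta, \eta \pm 2n^{-1/2})$. A second-order Taylor expansion in the jump size $2n^{-1/2}$ gives
\[
\mathcal{L}^n f(\eta) = 2\sqrt{n}\,(P^n_+(\eta) - P^n_-(\eta))\, f'(\eta) + 2(P^n_+(\eta) + P^n_-(\eta))\, f''(\eta) + O(n^{-1/2}),
\]
so the problem reduces to identifying the asymptotics of these two coefficients. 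The target is $\mathcal{L}f(y) = -2l(h,\beta)\, y\, f'(y) + 2(1-|m_0|)\, f''(y)$, exactly the generator of the claimed Ornstein--Uhlenbeck process (since the stated $\sigma$ satisfies $\sigma^2/2 = 2(1-|m_0|)$).

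The key algebraic input is the Curie--Weiss identity $e^{2\beta(m_0+h)} = (1+m_0)/(1-m_0)$, which is a rewriting of \eqref{eq:curie-weiss}. With $\gamma = 1/2$ one computes from \eqref{eq:Phi} that
\[
\beta\bigl[\Phi^n(\eta) - \Phi^n(\eta \pm 2n^{-1/2})\bigr] = \pm 2\beta(m_0+h) \pm 2\beta n^{-1/2}\eta + O(n^{-1}),
\]
so that the acceptance factor in one of the two directions tends to $1$ and in the other to $(1-|m_0|)/(1+|m_0|)$. I would separate the cases $m_0 > 0$, $m_0 < 0$ and $m_0 = 0$ to resolve the $1 \wedge (\cdot)$ in \eqref{eq:MH-definition}, observing that on any fixed compact set of $\eta$ and for $n$ large enough the sign of the exponent is controlled by the leading term $\pm 2\beta(m_0+h)$, so that the acceptance probability reduces locally to a smooth function of $\eta$ that can be expanded to order $n^{-1/2}$. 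Combining with the explicit form \eqref{eq:transition-probs-eta-rw} of $Q^n_{\pm}$ and multiplying out the expansions yields, locally uniformly in $\eta$,
\[
P^n_+(\eta) + P^n_-(\eta) \longrightarrow 1 - |m_0|, \qquad \sqrt{n}\,(P^n_+(\eta) - P^n_-(\eta)) \longrightarrow -l(h,\beta)\, \eta,
\]
matching $\mathcal{L}$ exactly.

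Convergence of initial laws, $\mu^n \Rightarrow \mathcal{N}(0, v(h,\beta))$, follows from the classical Curie--Weiss central limit theorem for the magnetization at supercritical temperature (see e.g.\ \cite{Ellis2006}, Chapter IV); Proposition~\ref{prop:concentration-Curie-Weiss}(i) additionally provides exponential concentration of $\mu^n$, giving compact containment for the stationary chain $Y^n$. Path-space tightness is then standard: the uniform bound $\sup_\eta |\mathcal{L}^n f(\eta)| = O(1)$ for $f \in C_c^\infty(\R)$ combined with stationarity supplies Aldous's criterion. The OU martingale problem is well-posed, so generator convergence on the core $C_c^\infty(\R)$ together with the convergence of initial distributions yields the claim. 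I expect the main technical obstacle to be the non-smooth $1 \wedge (\cdot)$ in \eqref{eq:MH-definition}: the subleading $O(n^{-1/2})$ correction inside the minimum must be tracked carefully to extract the correct drift coefficient, since naive linearisation at the leading exponent is only valid once one has fixed which branch of the minimum is active. The case analysis on the sign of $m_0$ above is precisely what makes this manipulation legitimate on each compact set of $\eta$.
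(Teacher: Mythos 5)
Your outline is essentially the paper's proof: expand the generator $n^{\alpha}(P^n\varphi-\varphi)$ to second order on the core $C_c^\infty(\R)$, identify the drift and diffusion coefficients from the asymptotics of $P^n_{\pm}$ using the Curie--Weiss identity $e^{2\beta(m_0+h)}=(1+m_0)/(1-m_0)$ and a case analysis on the sign of $h+m_0$ to resolve the $1\wedge(\cdot)$, then invoke concentration of $\mu^n$ and the Ethier--Kurtz machinery (the paper uses Corollary 4.8.7 directly). Your coefficient limits $P^n_++P^n_-\to 1-|m_0|$ and $\sqrt{n}(P^n_+-P^n_-)\to -l(h,\beta)\eta$ agree with Lemmas~\ref{lem:diffusion-general-temperature} and~\ref{lem:drift-high-temperature}, and your observation that the branch of the minimum must be fixed before linearising is exactly the content of Lemma~\ref{lem:simplify}.

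The one step that would fail as written is the localisation. You claim that the exponential concentration of $\mu^n$ ``gives compact containment for the stationary chain,'' and you prove the coefficient asymptotics only locally uniformly on fixed compact sets. But the process makes $\Theta(nT)$ jumps on $[0,T]$, so a union bound over jump times against a \emph{fixed} compact set $[-K,K]$ yields $nT\cdot\mu^n(|\eta|>K)\approx nT e^{-cK^2}$, which does not vanish; concentration of the one-dimensional marginals alone does not control $\sup_{t\le T}|Y^n(t)|$. The paper's device is to work on the growing sets $F^{n,\delta}=\{|\eta|\le n^{\delta}\}$, whose complement has stationary mass $o(n^{-\alpha})$ (Lemma~\ref{lem:concentration-high-temperature}, which uses $\beta<1$ through the mean value theorem applied to $m\mapsto m-\tanh\beta(m+h)$), and correspondingly to prove the drift and second-moment asymptotics uniformly over $F^{n,\delta}$ rather than over compacts (Lemmas~\ref{lem:diffusion-general-temperature}, \ref{lem:higher-moments}, \ref{lem:drift-high-temperature}). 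That uniformity over $|\eta|\le n^{\delta}$ is not free: the error terms in the expansion are of order $n^{-1/2}\eta^{2}$, so one must take $\delta<1/4$ (the paper uses $\delta=1/8$) for them to vanish. Your argument is repairable by replacing ``compact sets'' with $F^{n,\delta}$ throughout and checking that your expansions hold with that uniformity, which is precisely what the paper's lemmas do.
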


The proof depends on the convergence of the infinitesimal generator of the Markov chain semigroup, as e.g. \cite[Theorem 1.1]{RobertsGelmanGilks1997}, and is provided in Section~\ref{sec:proofs}.

\begin{theorem}[Diffusion limit of Metropolis-Hastings at the critical temperature]
\label{thm:diffusion-limit-mh-critical-temperature}
Suppose $\beta =1$ and $h = 0$. Suppose $Y^n$ jumps at rate $n^{3/2}$, i.e. we let $\alpha = 3/2$ in the definition of $Y^n$. Let the spatial scaling in the transition probabilities $P^n$ be determined by $\gamma = \quarter$. Then $Y^n$ converges weakly in $D([0,\infty),\R)$ to $Y$, where $Y$ is the stationary Langevin process satisfying the stochastic differential equation
\[ d Y(t) = -(2/3) (Y(t))^3 \ d t + 2  \ d B(t), \quad Y(0) \sim \mu\]
with $(B(t))$ a standard Brownian motion, where $\mu$ is the probability distribution on $\R$ with Lebesgue density 
\[ \frac{d \mu}{d y} = \left( \frac{4}{3}\right)^{1/4}\frac{\exp(-y^4/12)}{\Gamma(1/4)}.\]
\end{theorem}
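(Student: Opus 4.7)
The plan is to follow the strategy used for Theorem~\ref{thm:diffusion-limit-mh-high-temperature}: establish uniform convergence of the infinitesimal generators $L^n$ of $Y^n$ to the generator $L$ of the limit Langevin diffusion on a suitable core, verify convergence of the initial distributions $\mu^n \Rightarrow \mu$, and invoke a semigroup convergence theorem as in \cite{RobertsGelmanGilks1997}. With $\gamma = \tfrac{1}{4}$ the step size is $h_n := 2 n^{-3/4}$ and
\begin{align*}
L^n f(\eta) = n^{3/2} \sum_{\epsilon = \pm 1} P^n(\eta, \eta + \epsilon h_n) \bigl(f(\eta + \epsilon h_n) - f(\eta)\bigr),
\end{align*}
while the candidate limit generator is $L f(\eta) = -\tfrac{2}{3} \eta^3 f'(\eta) + 2 f''(\eta)$, to be defined on the core $C_c^\infty(\R)$.

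The heart of the argument is the generator expansion. Specialising $\beta = 1$, $h = 0$, $m_0 = 0$, $\gamma = \tfrac{1}{4}$ in~\eqref{eq:Phi} gives $\Phi^n(\eta) = -\tfrac{1}{2} n^{1/2} \eta^2$, so that
\begin{align*}
\Phi^n(\eta) - \Phi^n(\eta \pm h_n) = \pm 2 n^{-1/4} \eta + 2 n^{-1}, \quad Q^n(\eta, \eta \pm h_n) = \tfrac{1}{2}(1 \mp n^{-1/4} \eta).
\end{align*}
By the $\eta \mapsto -\eta$ symmetry of the model it suffices to consider $\eta \geq 0$; for such $\eta$ and large $n$ the acceptance ratios read $\alpha^+ \equiv 1$ and $\alpha^- = \exp(-2 n^{-1/4} \eta + 2 n^{-1})$. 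Taylor expanding $\alpha^-$ through order $n^{-3/4}$ and multiplying by the proposal produces, after cancellations at orders $n^0$, $n^{-1/4}$ and $n^{-1/2}$,
\begin{align*}
P^n(\eta, \eta + h_n) + P^n(\eta, \eta - h_n) &= 1 + O(n^{-1/2}), \\
P^n(\eta, \eta + h_n) - P^n(\eta, \eta - h_n) &= -\tfrac{1}{3} n^{-3/4} \eta^3 + O(n^{-1}),
\end{align*}
uniformly on compact subsets of $\R$. Combined with the Taylor expansion of $f \in C_c^\infty(\R)$ and the scalings $n^{3/2} h_n = 2 n^{3/4}$ and $\tfrac{1}{2} n^{3/2} h_n^2 = 2$, this gives $L^n f(\eta) \to L f(\eta)$ uniformly on compact $\eta$-sets.

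To promote generator convergence to weak convergence on Skorohod path space I would verify $\mu^n \Rightarrow \mu$: the pushforward of $\pi^n$ under $\eta^n$ carries an extra binomial factor $\binom{n}{n(1 + n^{-1/4}\eta)/2}$, and Stirling's formula combined with the Taylor expansion $n I(n^{-1/4}\eta) = \tfrac{1}{2} n^{1/2}\eta^2 + \tfrac{1}{12}\eta^4 + O(n^{-1/2})$ of the entropy function from~\eqref{eq:curie-weiss-functional} exhibits the exact cancellation of the leading $\tfrac{1}{2} n^{1/2}\eta^2$ against $-\beta\Phi^n(\eta)$, leaving limiting density $\propto \exp(-\eta^4/12)$; evaluating $\int_\R \exp(-y^4/12)\,dy$ via $y = 12^{1/4} u$ yields the normalizing constant $(4/3)^{1/4}/\Gamma(1/4)$. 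The tail control needed for this weak convergence and for tightness of $\{Y^n\}$ is supplied by Proposition~\ref{prop:concentration-Curie-Weiss}(ii), which after substitution yields $\pi^n(|\eta^n| \geq s) \leq 2 e^{-c s^4}$ uniformly in $n$. The main technical obstacle is the drift cancellation in the generator expansion: the proposal asymmetry alone contributes $O(n^{1/2})$ to $n^{3/4}[P^n(\eta,\eta+h_n) - P^n(\eta,\eta-h_n)]$, and only two successive cancellations against terms in the expansion of $\alpha^-$ leave the finite cubic drift; verifying these uniformly in $\eta$, and handling the neighbourhood of $\eta = 0$ where the $\min$ in the acceptance probability switches, is the delicate part.
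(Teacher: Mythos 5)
Your overall strategy is the same as the paper's: Taylor-expand the acceptance probability to third order to extract the drift $-\tfrac{2}{3}\eta^3$ and the diffusion coefficient $2$ from the rescaled generator, control the tails via Proposition~\ref{prop:concentration-Curie-Weiss}(ii), and conclude with the semigroup convergence theorem (the paper uses \cite[Corollary 4.8.7]{EthierKurtz2005}); your drift computation reproduces exactly the content of Lemma~\ref{lem:drift-critical-temperature}, and your identification of the limiting density and its normalizing constant is correct.

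There is, however, one genuine gap in the localization. You claim generator convergence \emph{uniformly on compact subsets of $\R$} and then appeal to tightness supplied by the bound $\pi^n(|\eta^n|\geq s)\leq 2e^{-cs^4}$. That is not enough: the chain attempts $O(n^{3/2})$ jumps per unit time, so to guarantee that the path stays in the region where the generators have converged you need the stationary mass outside that region to be $o(n^{-3/2})$, and $2e^{-cs^4}$ for \emph{fixed} $s$ does not beat $n^{3/2}$. The paper instead proves uniform generator convergence on the growing sets $F^{n,\delta}=\{|\eta|\leq n^{\delta}\}$ (Lemma~\ref{lem:drift-critical-temperature} with $\delta<1/16$, so that the error term $\tfrac{4}{3}n^{-1/4}\eta^{4}\leq \tfrac43 n^{4\delta-1/4}\to 0$) and then uses Lemma~\ref{lem:concentration-critical-temperature} to get $n^{\alpha}\pi^n(\eta^n\notin F^{n,\delta})\to 0$, which is exactly the super-polynomial decay needed for the union bound over jump attempts. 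Your expansions do extend verbatim to these growing sets, so the gap is repairable, but as written the localization is too weak. Separately, a harmless arithmetic slip: the sum of the one-step probabilities is $p^n_+(\eta)+p^n_-(\eta)=1-n^{-1/4}\eta+O(n^{-3/4}\eta^3)$, i.e.\ $1+O(n^{-1/4})$ on compacts, not $1+O(n^{-1/2})$ --- the first-order terms cancel in the \emph{difference}, not in the sum; the limit $p^n_++p^n_-\to 1$ (hence diffusion coefficient $\tfrac12 n^{3/2}h_n^2\cdot 1=2$) still holds.
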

The expression for the limiting non-Gaussian distribution for the Curie-Weiss model is well known, see e.g. \cite[p. 4]{ChatterjeeDey2010}.

\section{Scaling limit for Lifted Metropolis-Hastings applied to Curie-Weiss}
\label{sec:LMH-CW-scaling-limit}

Carrying out the construction of Section~\ref{sec:LMH}, the Lifted Metropolis(-Hastings) scheme with random walk proposal leads to transition probabilities $T^n$ in the space $X^n \times \{-1, +1\}$ given by 
\begin{equation} \begin{aligned}
\label{eq:LMH-CW-transitions}
 T^n((\eta, +1),(\eta +  2 n^{\gamma -1}, +1)) = p^n_+(\eta), \\
 T^n((\eta, -1), (\eta - 2 n^{\gamma -1},-1)) = p^n_-(\eta), \\
 T^n((\eta, +1),(\eta, -1)) = \max\left(0,p^n_-(\eta) - p^n_+(\eta) \right), \\
 T^n((\eta, -1),(\eta, +1)) = \max\left(0,p^n_+(\eta) - p^n_-(\eta) \right),
\end{aligned} \end{equation}
and all other transition probabilities from $(\eta, \pm 1)$ to a different state are equal to zero.
Here $p^n_{\pm} = P^n(\eta, \eta \pm 2 n^{\gamma - 1})$, with $P^n$ the transition probabilities of MH for the Curie-Weiss model, as given by~\eqref{eq:MH-definition}. Recall that  $p^n_{\pm}$, and hence $T^n$, depends on the choice of the spatial scaling parameter $\gamma$.

Let $(Y^n, J^n)$ denote the stationary continuous time Markov chain which jumps at rate $n^{\alpha}$ according to $T^n$. Let  
\begin{equation}
 \label{eq:LMH-drift}
 a(h,\beta) := 1 - |m_0|
\end{equation}
and let $l(h,\beta)$ be as given in Theorem~\ref{thm:diffusion-limit-mh-high-temperature}. 
In the supercritical temperature regime, with $0 \leq \beta < 1$ and $h \in \R$, the limiting Markov process will be shown to have generator
\begin{equation}
 \label{eq:limiting-generator-LMH} L \varphi(\eta,j) = a(h, \beta) j \frac{\partial \varphi}{\partial \eta} + \max(0, j l(h,\beta) \eta) (\varphi(\eta,-j) - \varphi(\eta, j)),
\end{equation}
with domain 
\[D(L) = \left\{ \varphi : \R \times \{-1,1\} \rightarrow \R, \eta \mapsto \frac{\partial \varphi}{\partial \eta}(\eta, j) \in C_0(\R) \ \mbox{for $j = \pm 1$}\right\},\] 
where  $C_0(\R)$ is the Banach space of continuous functions on $\R$, vanishing at infinity.
This scaling limit is obtained provided we choose the right speed factor: we have to jump at rate $n^{1/2}$. This is formulated in the following theorem.

\begin{theorem}
\label{thm:LMH-supercritical-temperature}
Suppose $0 \leq \beta < 1$ and $h \in \R$. Suppose $(Y^n, J^n)$ jumps at rate $n^{1/2}$, i.e. we let $\alpha = 1/2$ in the definition of $(Y^n, J^n)$. Let the spatial scaling in the transition probabilities $T^n$ be determined by $\gamma = \half$. 
Then $(Y^n,J^n)$ converges weakly in $D([0,\infty),\R \times\{-1,1\})$ to $(Y,J)$, where $(Y,J)$ is the stationary Markov process with generator $L$ and stationary distribution $\half \mu \otimes (\delta_{-1}  + \delta_{+1})$, with $\mu = N(0, v(h,\beta))$ and $v(h,\beta)$ given by~\eqref{eq:supercritical-variance}.
\end{theorem}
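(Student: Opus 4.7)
The plan is to prove the theorem by a generator-convergence argument on Skorokhod path space, in the style of Ethier--Kurtz (Theorem 1.6.1 or 4.8.2), mirroring the proof of Theorem~\ref{thm:diffusion-limit-mh-high-temperature}. The candidate limit process is well defined and has $L$ as its infinitesimal generator on $C_0(\R \times \{-1,+1\})$ by Propositions~\ref{prop:construction} and~\ref{prop:feller}, so the real work is to verify (i) convergence of the initial distributions, (ii) pointwise convergence $L^n \varphi \to L \varphi$ on a core of $L$, uniformly on compacta, and (iii) the tightness-type condition built into the convergence theorem once $L$ generates a Feller semigroup.

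The generator $L^n$ of the time-rescaled chain $(Y^n,J^n)$ acts, for $\varphi \in D(L)$ and $j=+1$, as
\begin{align*}
L^n \varphi(\eta,+1) &= n^{1/2} p_+^n(\eta)\, \bigl[\varphi(\eta + 2n^{-1/2},+1) - \varphi(\eta,+1)\bigr] \\
&\quad + n^{1/2} \max\!\bigl(0,\, p_-^n(\eta) - p_+^n(\eta)\bigr)\, \bigl[\varphi(\eta,-1) - \varphi(\eta,+1)\bigr],
\end{align*}
with an analogous expression for $j=-1$. A first-order Taylor expansion gives $\varphi(\eta \pm 2n^{-1/2},j) - \varphi(\eta,j) = \pm 2 n^{-1/2} \partial_\eta \varphi(\eta,j) + O(n^{-1})$. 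With $\gamma = \half$ one has $\Phi^n(\eta) = -\half \eta^2 - n^{1/2}(m_0+h)\eta$, so that the Curie--Weiss self-consistency equation~\eqref{eq:curie-weiss} yields $\exp(2\beta(m_0+h)) = (1+m_0)/(1-m_0)$, whence the Metropolis acceptance ratios for $\pm$ moves converge (uniformly on compacta) to $(1+m_0)/(1-m_0)$ and $(1-m_0)/(1+m_0)$ respectively. Combining with $Q^n(\eta, \eta \pm 2n^{-1/2}) \to \half(1 \mp m_0)$, a case analysis on $\sign(m_0)$ shows that in both regimes $2 p^n_\pm(\eta) \to 1 - |m_0| = a(h,\beta)$, producing the drift term $a(h,\beta)\, j\, \partial_\eta \varphi$ of $L$.

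For the between-replica switching term one needs the next-order expansion: since $p_+^n$ and $p_-^n$ share the same $O(1)$ limit, their difference is $O(n^{-1/2})$ and must be computed to leading order. Focussing on $m_0 \geq 0$, the $+$ acceptance equals $1$ for $n$ large while the $-$ acceptance expands as $\tfrac{1-m_0}{1+m_0}\bigl(1 - 2\beta n^{-1/2} \eta + O(n^{-1})\bigr)$; combining with $Q^n$ and simplifying using the identity $\tfrac{1-\beta(1-m_0^2)}{1+m_0} = \tfrac{1}{1+m_0} - \beta(1-m_0) = l(h,\beta)$ gives $p_-^n(\eta) - p_+^n(\eta) = n^{-1/2} l(h,\beta)\, \eta + O(n^{-1})$, so that $n^{1/2} \max(0, p_-^n - p_+^n) \to \max(0, l(h,\beta)\eta)$. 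The symmetric analysis at $j=-1$ produces $\max(0, -l(h,\beta)\eta)$, unifying into $\max(0, j l(h,\beta)\eta)$ as in~\eqref{eq:limiting-generator-LMH}.

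Convergence of the initial distributions reduces to $\mu^n \Rightarrow N(0, v(h,\beta))$, which is the classical Curie--Weiss central limit theorem at supercritical temperature and follows from Proposition~\ref{prop:concentration-Curie-Weiss}(i) combined with a standard Laplace-type argument around $m_0$; the $J^n$-marginal is by construction $\half(\delta_{-1}+\delta_{+1})$. The main obstacle is the second-order expansion of $p_-^n - p_+^n$: one must track the subleading $O(n^{-1/2})$ term of the Metropolis $\min(1,\cdot)$ separately in each sign regime of $m_0$, and invoke~\eqref{eq:curie-weiss} crucially to collapse the resulting algebraic expression to the clean form $l(h,\beta)\eta$. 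A secondary technical point is that $\eta$ lives on the lattice $X^n$ which grows to fill $\R$; this is handled by extending $L^n$ to all $\eta \in \R$ via the same formula (the right-hand side is defined for every $\eta$), after which uniform-on-compacta generator convergence on $C_0(\R \times \{-1,+1\})$ is sufficient to invoke the Ethier--Kurtz theorem and conclude weak convergence in $D([0,\infty), \R \times \{-1,+1\})$.
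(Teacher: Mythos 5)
Your overall strategy (generator convergence in the Ethier--Kurtz framework, first-order Taylor expansion for the drift, second-order expansion of $p_-^n - p_+^n$ using the self-consistency equation~\eqref{eq:curie-weiss} to collapse to $l(h,\beta)\eta$) is the same as the paper's, and your algebra for the limits $2p_\pm^n \to 1-|m_0| = a(h,\beta)$ and $n^{1/2}(p_-^n(\eta) - p_+^n(\eta)) \to l(h,\beta)\eta$ is correct. However, the final step contains a genuine gap: uniform-on-compacta convergence of the generators is \emph{not} by itself sufficient to invoke the Ethier--Kurtz convergence theorems. Those theorems require either sup-norm convergence $\|L^{n}\varphi_n - L\varphi\|_\infty \to 0$ over the entire state space for $\varphi$ in a core, or (the route the paper takes, via \cite[Corollary 4.8.7]{EthierKurtz2005}) convergence uniformly over the growing sets $F^{n,\delta} = \{\eta : |\eta|\le n^{\delta}\}$ together with a proof that the process does not exit $F^{n,\delta}$ on $[0,T]$ with probability tending to one. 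The latter is where the non-asymptotic concentration inequality (Proposition~\ref{prop:concentration-Curie-Weiss}(i), via Lemma~\ref{lem:concentration-high-temperature}) and the stationarity of $(Y^n,J^n)$ enter essentially: a union bound over the $O(n^{1/2}T)$ jump times, each occurring at a stationarily distributed location, gives $\P^n(Y^n(t)\notin F^{n,\delta} \ \mbox{for some} \ 0 \le t\le T) \to 0$. Your proposal never controls the exit probability from compacta (or from $F^{n,\delta}$), and without that the generator convergence you establish does not yield weak convergence of the laws on path space.

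A secondary, smaller point: the convergence of the initial (stationary) distributions $\mu^n \Rightarrow N(0,v(h,\beta))$ does not follow from Proposition~\ref{prop:concentration-Curie-Weiss}(i) alone, which only gives tightness of $\eta^n$ at scale $n^{-1/2}$; you would need to actually carry out the Laplace-type argument you allude to. In the paper this is sidestepped because \cite[Corollary 4.8.7]{EthierKurtz2005} is a statement about stationary processes: once the localized generator convergence and the concentration estimate are in place, convergence of the stationary laws to the invariant law of the limiting zig-zag process (identified in Proposition~\ref{prop:invariant-measure}) comes out of the same theorem, rather than being a separate input.
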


It will be established in Section~\ref{sec:limiting-process} that $L$ is the generator of a Markov-Feller process. Let $(Y,J)$ denote the continuous time Markov process with generator $L$. The interpretation of $(Y,J)$ is straightforward: $Y$ moves with constant drift $a(h,\beta)$ in the direction $J$, until it changes its direction to $-J$. The changes in direction occur at events generated by a time inhomogeneous Poisson process with switching rate given by $\max(0, J(t) l(h,\beta) Y(t))$. See Section~\ref{sec:limiting-process} for a detailed discussion of this process.
At the critical temperature we have to jump at a faster rate $n^{3/4}$ to obtain a non-trivial limiting Markov process. The limiting process is slightly different (compared to the supercritical temperature regime) in the sense that it switches replicas at a modified (cubic) rate:

\begin{theorem} 
\label{thm:LMH-critical-temperature}
Suppose $\beta = 1$ and $h = 0$. Suppose $(Y^n, J^n)$ jumps at rate $n^{3/4}$, i.e. we let $\alpha = 3/4$ in the definition of $(Y^n,J^n)$. Let the spatial scaling in the transition probabilities $T^n$ be determined by $\gamma = \quarter$. 
Then $(Y^n,J^n)$ converges weakly in $D([0,\infty),\R \times\{-1,1\})$ to $(Y,J)$, where $(Y,J)$ is the stationary Markov process with generator $L$ given 
\begin{equation}
 \label{eq:limiting-generator-LMH-critical-temperature}
 L\varphi(\eta,j) = j \frac{d \varphi}{d \eta}(\eta,j) + \max(0, 1/3 j \eta^3) (\varphi(\eta,-j)- \varphi(\eta,j)),
\end{equation}
with stationary distribution $\half \mu \otimes (\delta_{-1}  + \delta_{+1})$, where $\mu$ is as in Theorem~\ref{thm:diffusion-limit-mh-critical-temperature}.
\end{theorem}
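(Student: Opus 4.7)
The plan is to follow the same generator-convergence route used for Theorem~\ref{thm:LMH-supercritical-temperature}, applying a standard semigroup/martingale-problem convergence theorem (e.g.\ Ethier--Kurtz, Corollary~4.8.7) to the infinitesimal generators $L^n$ of the speeded-up chains $(Y^n,J^n)$ and the limit generator $L$ in~\eqref{eq:limiting-generator-LMH-critical-temperature}. The three ingredients needed are: (i)~that $L$ generates a Feller semigroup on a suitable core, handled in Section~\ref{sec:limiting-process}; (ii)~convergence $L^n\varphi\to L\varphi$ uniformly on compacts in $\R\times\{-1,+1\}$ for $\varphi$ in that core; and (iii)~tightness of $(Y^n,J^n)$ in $D([0,\infty),\R\times\{-1,+1\})$. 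Convergence of the initial stationary distributions $\mu^n\otimes\tfrac12(\delta_{-1}+\delta_{+1})\Rightarrow\tfrac12\mu\otimes(\delta_{-1}+\delta_{+1})$ follows exactly as in Theorem~\ref{thm:diffusion-limit-mh-critical-temperature}.

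For (ii), I would write out $L^n$ from~\eqref{eq:LMH-CW-transitions}. With $h=0$, $m_0=0$, $\beta=1$, $\gamma=\quarter$, the potential reduces to $\Phi^n(\eta)=-\tfrac12 n^{1/2}\eta^2$, so
\[ \exp\bigl(\Phi^n(\eta)-\Phi^n(\eta\pm 2n^{-3/4})\bigr)=\exp\bigl(\pm 2n^{-1/4}\eta + 2n^{-1}\bigr), \]
while $Q^n(\eta,\eta\pm 2n^{-3/4})=\tfrac12(1\mp n^{-1/4}\eta)$. For a smooth, compactly-supported test function $\varphi(\cdot,j)$, the transport contribution in the $(+1)$-replica reads $n^{3/4}p^n_+(\eta)[\varphi(\eta+2n^{-3/4},+1)-\varphi(\eta,+1)]=2p^n_+(\eta)\partial_\eta\varphi(\eta,+1)+O(n^{-3/4})$, which converges to $\partial_\eta\varphi(\eta,+1)$ since $p^n_+(\eta)\to\tfrac12$ uniformly on compact sets; the $(-1)$-replica is analogous. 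The switching rate requires a more delicate Taylor expansion: at the critical point the leading (order-$n^{-1/4}$ and order-$n^{-1/2}$) contributions to $p^n_-(\eta)-p^n_+(\eta)$ cancel exactly --- which is precisely why the correct time scaling jumps from $n^{1/2}$ to $n^{3/4}$ at criticality --- and carrying the expansion to third order in $u=n^{-1/4}\eta$ yields, for $\eta>0$,
\[ n^{3/4}\bigl(p^n_-(\eta)-p^n_+(\eta)\bigr)=\tfrac13\eta^3 + O(n^{-1/4}), \]
with the symmetric statement for $\eta<0$ (and $+$/$-$ swapped for $j=-1$), producing the limiting rate $\max(0,\tfrac13\,j\eta^3)$.

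The main obstacle is controlling these remainders \emph{uniformly} in $\eta$ over the unbounded state space $X^n$, which extends to $O(n^{1/4})$. Here Proposition~\ref{prop:concentration-Curie-Weiss}(ii) is essential: under $\mu^n$ the mass on $\{|\eta|\geq R\}$ is exponentially small in $n$ for any fixed $R>0$, so for the stationary process $(Y^n,J^n)$ the compact containment condition holds, and we only need the Taylor estimates on compact sets, where they are valid. Tightness in $D([0,\infty),\R\times\{-1,+1\})$ then follows by Aldous' criterion from the bound $O(\delta)$ on expected displacement over an interval of length $\delta$ (increments of size $2n^{-3/4}$ occurring at rate $n^{3/4}$) together with the switching rate being uniformly bounded on compacts. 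A mild subtlety is the kink of the $\min$ in the Hastings acceptance near $\eta=0$, but it occurs on the set $|\eta|\leq n^{-3/4}$, and since the limiting rate $\max(0,\tfrac13 j\eta^3)$ is continuous across $\eta=0$ the kink is harmless in the limit. Combining tightness, generator convergence on a core, and uniqueness of the martingale problem for $L$ (from its Feller property) identifies the weak limit as the process $(Y,J)$ with generator~\eqref{eq:limiting-generator-LMH-critical-temperature} and stationary distribution $\tfrac12\mu\otimes(\delta_{-1}+\delta_{+1})$.
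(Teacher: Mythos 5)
Your proposal is correct and follows essentially the same route as the paper: the paper's proof of Theorem~\ref{thm:LMH-critical-temperature} simply transfers the generator-convergence argument of Theorem~\ref{thm:LMH-supercritical-temperature} (via \cite[Corollary 4.8.7]{EthierKurtz2005}) with $\alpha=3/4$, $\gamma=1/4$, $\delta=1/32$, using the critical-temperature concentration bound and the third-order Taylor expansion of the acceptance ratio, which yields exactly your switching rate $n^{3/4}(p^n_--p^n_+)=\tfrac13\eta^3+O(n^{-1/4}\eta^4)$ (Lemmas~\ref{lem:concentration-critical-temperature}, \ref{lem:drift-critical-temperature} and \ref{lem:LMH-switching-rate-critical-temperature}). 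Your separate treatment of tightness and uniqueness of the martingale problem is subsumed in the paper by the single application of the Ethier--Kurtz corollary on the growing sets $F^{n,\delta}$, but the substance is identical.
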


\begin{remark}
Analogous results can be obtained for the closely related Glauber dynamics and its lifted version. The only difference is that the resulting Langevin diffusion (for Glauber dynamics) and zig-zag process (for lifted Glauber dynamics) are a factor $2/(1+|m_0|) \in (1,2]$ slower than for MH and LMH.
\end{remark}

\section{The limiting zig-zag process}
\label{sec:limiting-process}

In this section we will investigate a generalization of the Markov process with generator~\eqref{eq:limiting-generator-LMH}. Let $E = \R \times \{-1,+1\}$. For  $\varphi : E \rightarrow \R$ we often write $\varphi^+(y) := \varphi(y,+1)$ and $\varphi^-(y) := \varphi(y,-1)$. If we write $\varphi^{\pm}$ we mean both $\varphi^+$ and $\varphi^-$. Equip $E$ with the product topology and let $C(E)$ denote the space of continuous functions $\varphi : E \rightarrow \R$. Note that $\varphi \in C(E)$ if and only if $\varphi^{\pm} \in C(\R)$. Let $C_0(E)$ denote the linear subspace of $\varphi \in C(E)$ which vanish at infinity, i.e. $\varphi^{\pm} \in C_0(\R)$ (where $C_0(\R)$ denotes the Banach space of continuous functions on $\R$, vanishing at infinity). Let $C^1(\R)$ denote the space of continuously differentiable functions on $\R$.

Throughout this section let $\lambda : E \rightarrow [0,\infty)$ be continuous, and $a > 0$.
Introduce a densely defined linear operator on $C_0(E)$
\begin{equation}
\label{eq:abstract-generator}
 L \varphi(y,j) = a j \frac{\partial \varphi}{\partial y}(y,j) + \lambda(y,j) (\varphi(y,-j) - \varphi(y,j)), \quad y \in \R, j = \pm 1,
\end{equation}
with domain $D(L) = \{ \varphi : E \rightarrow \R, \varphi^{\pm} \in C^1(\R), (L \varphi)^{\pm} \in C_0(\R)\}$. It is easy to verify that $L$ is closable.

\subsection{Construction of the zig-zag process}
\label{sec:construction}

\begin{assumption}
\label{ass:construction}
There exist constants $y_0 \geq 0$ and $\lambda_{\min} > 0$ such that $\lambda (y, j) \geq \lambda_{\min}$ for $j y \geq y_0$.
\end{assumption}

We will call a switch from the $(j)$-replica to the $(-j)$-replica a `good switch' when $jy \geq y_0$, and a `bad switch' when $jy \leq -y_0$. For example, a switch from $+1$ to $-1$ is good for $y \geq y_0$, but bad for $y \leq - y_0$. Good switches make the process direct itself towards the origin, whereas bad switches do the opposite. If `too few'  good switches occur, the process might wander off to infinity. Assumption~\ref{ass:construction} states that for $|y| \geq y_0$ there is a lower bound for the rate at which good switches occur.

For $(y,j) \in E$, define the survival function 
\begin{equation}
\label{eq:survival} F(t;y,j) := \exp \left( - \int_0^t \lambda( y + a j s, j) \ d s \right), \quad t \geq 0.
\end{equation}
Since $\lambda$ is continuous, and hence bounded on compact sets, for every $(y,j) \in E$ and $t \geq 0$, $F(t;y,j) > 0$.
It is established in Lemma~\ref{lem:finite-switching-times} that for every $(y,j) \in E$, $1 - F(\cdot, y,j)$ is the distribution function of a strictly positive random variable that is almost surely finite. In fact, $1- F(\cdot, y, j)$ will serve as the distribution of the random time at which the value of $j$ will be switched, starting from $(y,j)$.

Given $(y,j) \in E$, define the process $(Y(t),J(t))$ along with random variables $(Z_i)_{i \in \{1, 2, \dots\}}$ and $(T_i)_{i \in \{0,1, 2, \dots\}}$ as follows. 
\begin{itemize}
 \item Let $T_0 = 0$, $J(0) = j$, $Y(0) =y$.
 \item For $i = 1, 2, \dots$
 \begin{itemize}
 \item Let $Z_i$ be distributed according to $\P_{y,j}(Z_i > t \mid Z_1, \dots, Z_{i-1}) = F(t; Y(T_{i-1}), J(T_{i-1}))$;
 \item Let $T_i := T_{i-1} + Z_i$;
 \item Define $J(t) = J(T_{i-1})$ for $T_{i-1} < t < T_i$ and $J(T_i) = -J(T_{i-1})$;
 \item Define $Y(t) = Y(T_{i-1}) + J(T_{i-1}) a (t-T_{i-1})$ for $T_{i-1} < t \leq T_i$
\end{itemize}
\end{itemize}
Then $T_k := \sum_{i =1}^k Z_i$. The process $(Y(t))$ is continuous and piecewise linear, and $(J(t))$ is piecewise constant and right-continuous. It follows that $(Y(t),J(t))_{t \geq 0}$ is cadlag. For $t \geq 0$ let 
\[ N(t) := \sup \left\{ k \in \N : T_k \leq t\right\},\]
the number of switches that have occured up to  time $t$. 
We have defined $(Y(t),J(t))$ up to $t < T_{\infty} := \lim_{k \rightarrow \infty} T_k \leq \infty$. 
By Lemma~\ref{lem:finitely-many-switches}, we can exclude the possibility that $\lim_{k \rightarrow \infty} T_k < \infty$.

Let $\P_{y,j}$ denote the probability distribution conditional over these random variables given that $Y(0) = y, J(0) = j$.
Let $\mathcal F_t := \sigma( \{ (Y(s),J(s)): s \leq t \})$.

\begin{proposition}
\label{prop:construction}
Suppose Assumption~\ref{ass:construction} holds. Then under $\P_{y,j}$, the process $(Y,J)$ is a non-explosive strong Markov process with respect to $(\mathcal F_t)$, with generator equal to the closure of~\eqref{eq:abstract-generator}.
\end{proposition}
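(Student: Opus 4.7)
The plan is to verify the three claims in turn: non-explosion, the (strong) Markov property, and identification of the generator with $\overline{L}$.

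For non-explosion, the piecewise linear trajectory satisfies $|Y(t)-y|\le at$ for all $t<T_\infty$, so on any finite interval $[0,T]$ the switching rate along the path is bounded by $M_T:=\sup\{\lambda(y',\pm 1):|y'-y|\le aT\}<\infty$ by continuity of $\lambda$. The number of switches $N(T)$ is thus stochastically dominated by a $\mathrm{Poisson}(TM_T)$ variable, which is a.s.\ finite; this is the content of Lemma~\ref{lem:finitely-many-switches}, and yields $T_\infty=\infty$ almost surely. Note that Assumption~\ref{ass:construction}, which provides only a lower bound on $\lambda$, is not actually needed for this step; it enters later, in the ergodicity arguments.

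For the Markov property, the crucial ingredient is the cocycle identity
\[ F(s_1+s_2;y,j) = F(s_1;y,j)\,F(s_2;y+ajs_1,j),\]
immediate from~\eqref{eq:survival}. Combined with the iterative construction, this shows that conditional on $\mathcal F_t$ the residual waiting time to the next switch has survival function $F(\cdot;Y(t),J(t))$ and the subsequent dynamics restart from $(Y(t),J(t))$, giving the simple Markov property. The strong Markov property follows by approximating a bounded stopping time $\tau$ from above by dyadic stopping times $\tau_n := 2^{-n}\lceil 2^n\tau\rceil$, applying simple Markov at each of the countably many levels of $\tau_n$, and passing to the limit using right-continuity of paths; alternatively one may invoke the general PDMP framework of \cite{Davis1993}.

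To identify the generator, I would condition on the first switch and write, for bounded measurable $\varphi$,
\[ P_t\varphi(y,j) = F(t;y,j)\,\varphi(y+ajt,j) + \int_0^t \lambda(y+ajs,j)\,F(s;y,j)\,P_{t-s}\varphi(y+ajs,-j)\,ds.\]
For $\varphi\in D(L)$ the right-hand side is differentiable at $t=0$, and the two contributions combine to
\[ \lim_{t\downarrow 0}\frac{P_t\varphi(y,j)-\varphi(y,j)}{t} = aj\,\partial_y\varphi(y,j) + \lambda(y,j)(\varphi(y,-j)-\varphi(y,j)) = L\varphi(y,j),\]
with convergence uniform in $(y,j)$ since $L\varphi\in C_0(E)$ by definition of $D(L)$. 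Thus $L|_{D(L)}$ is a restriction of the infinitesimal generator $A$ of $(P_t)$.

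To upgrade this to $A=\overline{L}$ I would apply the Hille-Yosida / core criterion: $L$ is dissipative (positive-maximum principle at a point where $\varphi\in D(L)$ attains its supremum), so it suffices to show that the range $(\alpha I - L)(D(L))$ is dense in $C_0(E)$ for some $\alpha>0$. The natural candidate is $\varphi := R_\alpha g:=\int_0^\infty e^{-\alpha t}P_tg\,dt$, which via the first-switch decomposition admits an explicit representation as an integral along the characteristics $s\mapsto(y+ajs,j)$ coupled through a Volterra-type kernel to the $(-j)$-component. I expect the main obstacle to be verifying that $R_\alpha g\in D(L)$ for $g$ in a dense subset of $C_0(E)$, that is $(R_\alpha g)^{\pm}\in C^1(\R)$ with both $R_\alpha g$ and $L(R_\alpha g)$ in $C_0(E)$. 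The $C^1$-regularity follows by differentiating under the integral using continuity of $\lambda$; the decay at infinity of $L(R_\alpha g)$ is where the uniform lower bound of Assumption~\ref{ass:construction} intervenes, ensuring that the resolvent kernel is integrable along characteristics escaping to infinity.
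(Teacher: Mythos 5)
Your non-explosion and simple-Markov steps are correct and essentially coincide with the paper's Lemmas~\ref{lem:finite-switching-times} and~\ref{lem:finitely-many-switches} (and you are right that the Poisson domination uses only continuity of $\lambda$; the lower bound in Assumption~\ref{ass:construction} is what makes each waiting time almost surely finite, so that the $Z_i$ are well defined). The paper itself proves the proposition by a one-line appeal to the general PDMP theory of \cite{Davis1984}, so everything beyond those two lemmas is your own construction, and that is where the gaps lie. First, the dyadic-approximation argument for the strong Markov property needs continuity of $(y,j)\mapsto P(t)\varphi(y,j)$ in order to pass to the limit along $\tau_n\downarrow\tau$; that is precisely the Feller property, which in this paper is established only \emph{after} this proposition (Proposition~\ref{prop:feller}) by a nontrivial coupling argument. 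As written your primary route is therefore circular, and you are in fact relying on the fallback citation to Davis.

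Second, and more seriously, the generator identification — the real content of the proposition — is not established. Pointwise differentiation of the first-switch decomposition at $t=0$ is fine, but for $\varphi$ to lie in the Hille--Yosida domain the difference quotient must converge in sup norm, and ``since $L\varphi\in C_0(E)$'' is not a proof of this: with the unbounded rates arising in the Curie--Weiss limits ($\lambda$ growing like $|y|$ or $|y|^3$), the error terms involve $\sup_{|z-y|\le at}\lambda(z,j)$ and are not uniformly small in $y$ for fixed small $t$. One must split $E$ into a compact part (where $\lambda$ is bounded) and a tail part (where the decay forced by $(L\varphi)^{\pm}\in C_0(\R)$ is exploited), and note that $(L\varphi)^{\pm}\in C_0(\R)$ only controls the combination $aj\varphi'+\lambda\,(\varphi(\cdot,-j)-\varphi(\cdot,j))$, not the two summands separately. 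More importantly, the range-density/core step — which is exactly where ``generator equals the \emph{closure} of $L$'' lives — is only announced as a plan, with the key verification $R_\alpha g\in D(L)$ explicitly deferred. So the proposal proves non-explosion and the Markov property but not the statement about the generator; to close it you should either carry out the resolvent computation in full, or do as the paper does and invoke the extended-generator (martingale) characterization of \cite{Davis1984}, for which these uniformity issues do not arise.
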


\begin{proof}
This follows directly from general theory for piecewise deterministic Markov processes; see \cite{Davis1984}.
\end{proof}

\subsection{Regularity}

Let $P = (P(t))_{t \geq 0}$ denote the Markov semigroup corresponding to the zig-zag process $(Y,J)$.
By a coupling argument, we can establish the Feller property for $P$. The Feller property of piecewise deterministic Markov processes is established in \cite{Davis1993} under the assumption of bounded switching rates, which is not satisfactory in our setting.
The proofs of this proposition and subsequent results are located in Section~\ref{sec:proof-zigzag}. 

\begin{proposition}
\label{prop:feller}
Suppose Assumption~\ref{ass:construction} holds. The Markov transition semigroup $P$ with infinitesimal generator $L$ is Feller, i.e. for every $\varphi \in C_0(E)$ and $t \geq 0$, we have
$P(t) \varphi \in C_0(E)$.
\end{proposition}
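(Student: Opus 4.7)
The task splits naturally into showing that $P(t)\varphi$ vanishes at infinity and that it is continuous on $E$. The first part is essentially free from the finite propagation speed of the zig-zag process: under $\P_{y,j}$ the trajectory satisfies $|Y(t) - y| \leq at$ \emph{surely}, since its velocity has magnitude $a$. Given $\varphi \in C_0(E)$ and $\varepsilon > 0$, pick $K$ with $|\varphi^{\pm}| < \varepsilon$ outside $[-K,K]$; then $|y| > K + at$ forces $|Y(t)| > K$, whence $|P(t)\varphi(y,j)| \leq \varepsilon$, proving $P(t)\varphi$ vanishes at infinity.

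For continuity, fix $(y_0, j_0) \in E$ and a sequence $(y_n, j_n) \to (y_0, j_0)$; since $\{-1,+1\}$ is discrete, we may assume $j_n = j_0$ for all $n$. The plan is to realise all trajectories on a common probability space carrying iid $\mathrm{Exp}(1)$ random variables $E_1, E_2, \ldots$, and to apply the inverse-transform construction of Section~\ref{sec:construction} to generate simultaneously $(Y^n, J^n)$ started from $(y_n, j_0)$ and $(Y, J)$ started from $(y_0, j_0)$, with the $k$th switch time of each trajectory determined as the first $s$ at which its cumulative switching rate $\Lambda_k(s)$ along the current segment reaches $E_k$. The key step is an induction on $k$ showing that a.s.\ $T_k^{(n)} \to T_k$ and $(Y^n(T_k^{(n)}), J^n(T_k^{(n)})) \to (Y(T_k), J(T_k))$: continuity of $\lambda$ combined with Assumption~\ref{ass:construction} makes each $\Lambda_k$ a continuous nondecreasing function tending to $+\infty$, whose right-inverse at $E_k$ depends continuously on the segment's initial data at every point of strict increase of $\Lambda_k$, and almost surely $E_k$ avoids the (at most countable) set of plateau-values thanks to its continuous law.

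Granted this convergence of switch times, for any $t$ not lying in the a.s.\ null set $\bigcup_{k} \{T_k = t\}$ one obtains $J^n(t) = J(t)$ and $Y^n(t) \to Y(t)$ a.s.\ for all large $n$, and bounded convergence delivers $P(t)\varphi(y_n, j_0) \to P(t)\varphi(y_0, j_0)$. The step I expect to require the most care is the inductive continuity of the hitting times: one must exclude the null set of $E_k$-values at which $\Lambda_k$ has a plateau and propagate continuity of the post-switch state from one stage to the next. Uniform continuity of $\lambda$ on the compact interval $[-\sup_n|y_n| - at,\, \sup_n|y_n| + at]$, which contains every trajectory on $[0,t]$, combined with Lemma~\ref{lem:finitely-many-switches} controlling the total number of switches in $[0,t]$, makes the per-step estimates routine once the first-switch continuity is established.
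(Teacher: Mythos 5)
Your argument is correct, but it takes a genuinely different route from the paper's. The vanishing-at-infinity step is the same (finite propagation speed). For continuity, however, the paper couples only the \emph{first} switch: for nearby starting points $y\le z$ with $j=+1$ it takes a maximal coupling of the law of $T_1$ started from $y$ with the law of $T_1+(z-y)/2$ started from $z$, so that on the coupling event the two trajectories meet exactly when the $y$-trajectory first switches and merge from then on; the failure probability is bounded by two explicit quantities $c_1(y,z)+c_2(y,z)$, shown by dominated convergence to be continuous and to vanish as $z\to y$, and uniform continuity of $\varphi$ controls the pre-merge discrepancy. You instead use a synchronous coupling driven by a common sequence of $\mathrm{Exp}(1)$ clocks and prove almost-sure convergence of all switch times and hence of the path at any fixed $t$, finishing with bounded convergence. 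Your route avoids the maximal-coupling theorem and the total-variation computation and generalizes more readily (e.g.\ to non-constant speeds or higher dimensions), at the cost of an induction over all switches and the bookkeeping of null sets: you correctly identify the plateau values of the cumulative hazard and the events $\{T_k=t\}$ as the sets to be excluded, though you should state explicitly that at stage $k$ the plateau set is determined by $E_1,\dots,E_{k-1}$ while $E_k$ is independent of them, which is what makes the exclusion legitimate, and that Lemma~\ref{lem:finitely-many-switches} lets the induction stop at an a.s.\ finite random index. The paper's argument, in exchange for being tied to the one-dimensional constant-speed geometry, yields a more quantitative control through the explicit, jointly continuous functions $c_1$ and $c_2$.
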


Let $B_b(E)$ and $C_b(E)$ denote the sets of bounded Borel measurable functions and bounded continuous functions on $E$, respectively. Recall that 
$(P(t))_{t \geq 0}$ is strong Feller if $P(t)\varphi \in C_b(E)$ for any $t > 0$ and any $\varphi \in B_b(E)$. The transition  semigroup corresponding to the zig-zag process does not satisfy this property. 

\begin{observation}
\label{obs:not-strong-Feller}
Suppose Assumption~\ref{ass:construction} holds. Then $(P(t))_{t \geq 0}$ is not strong Feller.
\end{observation}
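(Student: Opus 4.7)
The plan is to exhibit a bounded Borel measurable test function whose $P(t)$-image fails to be continuous at one point. The guiding intuition is that a piecewise deterministic dynamics always carries a strictly positive \emph{atomic} component along the no-switch trajectory: starting from $(y,+1)$, with probability $F(t;y,+1)=\exp(-\int_0^t \lambda(y+as,+1)\,ds)>0$ the process has made no switch up to time $t$ and therefore sits deterministically at $(y+at,+1)$. This atom moves with $y$, while the rest of the law at time $t$ is absolutely continuous, so singling out one point on the no-switch trajectory produces a discontinuity.

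Concretely, fix any $t>0$ and any $y^\ast\in\R$. Set $B=\{y^\ast+at\}$ and
\[
\varphi(z,k)=\1_{\{+1\}}(k)\,\1_B(z),
\]
which is bounded and Borel. I will compute $P(t)\varphi(y,+1)=\P_{y,+1}(Y(t)=y^\ast+at,\,J(t)=+1)$ by splitting on the number $N(t)$ of switches before time $t$. On $\{N(t)=0\}$ one has $Y(t)=y+at$ and $J(t)=+1$, so this event contributes $F(t;y,+1)\,\1_{\{y=y^\ast\}}$. The first step is therefore to observe that $F(t;y^\ast,+1)>0$ by continuity of $\lambda$, giving $P(t)\varphi(y^\ast,+1)\geq F(t;y^\ast,+1)>0$.

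The second and main step is to show that the contribution from $\{N(t)\geq 1\}$ vanishes pointwise, which then forces $P(t)\varphi(y,+1)=0$ for every $y\neq y^\ast$. To this end I would condition on $\{N(t)=k\}$ for each $k\geq 1$ and on the ordered switch times $(T_1,\ldots,T_k)$. From the explicit construction in Section~\ref{sec:construction}, given the initial state $(y,+1)$ and the switch times, $Y(t)$ is a fixed piecewise affine function of $(T_1,\ldots,T_k)$ with nonzero slopes $\pm a$ in each variable; in particular the map $T_1\mapsto Y(t)$ is strictly monotone on the relevant simplex. Since the joint conditional law of $(T_1,\ldots,T_k)$ given $N(t)=k$ admits a density on that simplex (its survival function involves $\int \lambda$ along continuous paths, with $\lambda$ locally bounded), the image law of $Y(t)$ is absolutely continuous and assigns mass zero to the single point $y^\ast+at$. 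Summing over $k\geq 1$ and using non-explosivity (Proposition~\ref{prop:construction}) yields $\P_{y,+1}(Y(t)=y^\ast+at,\,N(t)\geq 1)=0$.

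Combining the two steps, $P(t)\varphi(\cdot,+1)$ equals $F(t;y^\ast,+1)>0$ at $y^\ast$ and $0$ elsewhere, so it is not continuous at $(y^\ast,+1)$, contradicting the strong Feller property. The only nontrivial part of the argument is the absolute continuity statement in the second step; that is where I would spend the most care, although it reduces to a standard change-of-variables argument because the deterministic flow between switches is linear with constant speed and the switch times have a joint density.
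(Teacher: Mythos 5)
Your argument is correct, but it takes a genuinely different route from the paper's. The paper tests against the indicator of the half-line $[y+at,\infty)$ and uses only the finite speed of propagation: from any start $(z,+1)$ with $z<y$ the process satisfies $Y(t)\leq z+at<y+at$ surely, so $P(t)\varphi$ vanishes identically to the left of $y$ while being at least $\P_{y,+1}(T_1>t)>0$ at $y$; no information about the absolutely continuous part of the time-$t$ law is needed. You instead test against the indicator of the singleton $\{y^\ast+at\}$ and isolate the atom carried by the no-switch event, which forces you to prove that the law of $Y(t)$ on $\{N(t)\geq 1\}$ is atomless. That step is true and your sketch of it is essentially right — the joint law of $(T_1,\dots,T_k)$ on $\{N(t)=k\}$ has a Lebesgue density (built inductively from the densities $s\mapsto\lambda(\cdot)F(s;\cdot)$ of the successive holding times), and $Y(t)$ is an affine function of these with coefficient $\pm 2a$ in $T_1$, so its pushforward charges no point; one should phrase it via the joint density rather than ``fixing $T_2,\dots,T_k$ and varying $T_1$'', since the later switch times are not independent of $T_1$. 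The trade-off is clear: your version exhibits a two-sided discontinuity and makes the atomic structure of the semigroup explicit, at the cost of a measure-theoretic verification; the paper's half-line trick gets the one-sided discontinuity for free.
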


\begin{proof}
Let $j = +1$ and $y \in \R$. Let $t > 0$ and let $A = [y + a t, \infty)$. Let $\varphi(y,j) = \1_{A}(y)$.
Because $t < T_1$ implies $Y(t) \in A$, it follows that
$P(t) \varphi(y,j) = \P_{y,j}(Y(t) \in A) \geq \P_{y,j}(T_1 > t) > 0$. However $P(t) \varphi(z,j) = \P_{z,j}(Y(t) \in A) = 0$ for every $z < y$, 
so that $P(t) \varphi$ is not continuous.
\end{proof}

\subsection{Invariant measure}

Let us strengthen Assumption~\ref{ass:construction} into the following assumption.
\begin{assumption}
\label{ass:invariant-measure}
There exist constants $y_0 \geq 0$ and $\lambda_{\min} > 0$ such that
\begin{itemize}
 \item[(i)] $\lambda (y, j) \geq \lambda_{\min}$ for $j y \geq y_0$, and
 \item[(ii)] $\lambda(y,-j) \leq \lambda(y,j)$ for $j y \geq y_0$.
\end{itemize}
\end{assumption}

We strengthened Assumption~\ref{ass:construction} by requiring that in the tails the rate at which `good switches' (i.e. mean reverting switches) occur is higher than the rate of `bad switches'.

\begin{proposition}
\label{prop:invariant-measure}
Suppose Assumption~\ref{ass:invariant-measure} holds. Let $\Psi : \R \rightarrow \R$ be defined by
\[ \Psi(y) = \frac 1 a \int_{0}^y \{\lambda^+(\eta) - \lambda^-(\eta)\} \ d \eta, \quad y \in \R.\]
Then $\Psi$ is bounded from below, $\Psi(y) <\infty$ for all $y \in \R$, and the Markov process $(Y,J)$ has invariant measure $\mu$ with density $(y,j) \mapsto \exp(-\Psi(y))$ with respect to $\mathrm{Leb} \otimes (\delta_{-1} + \delta_{+1})$ on $E$.
\end{proposition}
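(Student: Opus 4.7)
The plan is to verify the three claims of the proposition — finiteness of $\Psi$, its lower boundedness, and invariance of $\mu$ — in that order. For the first two, continuity of $\lambda^{\pm}$ makes $\lambda^+ - \lambda^-$ bounded on every compact interval, so the integral defining $\Psi(y)$ converges for each $y \in \R$. For the lower bound, Assumption~\ref{ass:invariant-measure}(ii) applied with $j=+1$ gives $\lambda^+ \geq \lambda^-$ on $[y_0,\infty)$, so $\Psi$ is nondecreasing there; applied with $j=-1$ it gives $\lambda^- \geq \lambda^+$ on $(-\infty,-y_0]$, so $\Psi$ is nonincreasing there. Since $\Psi$ is continuous on $\R$, its global infimum is attained on the compact set $[-y_0,y_0]$ and is therefore finite.

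For invariance of $\mu$, my plan is to verify the infinitesimal identity $\int_E L\varphi\,d\mu = 0$ on the natural core $\{\varphi : \varphi^{\pm} \in C_c^1(\R)\}$ and then invoke general PDMP theory (\cite{Davis1993}) to promote it to invariance under the semigroup. Writing $\rho(y) := e^{-\Psi(y)}$, split
\begin{equation*}
\int_E L\varphi\,d\mu = a\sum_{j=\pm 1} j\int_\R \partial_y \varphi^j(y)\,\rho(y)\,dy + \sum_{j=\pm 1}\int_\R \lambda^j(y)\{\varphi^{-j}(y) - \varphi^j(y)\}\rho(y)\,dy.
\end{equation*}
Since $a\Psi'(y) = \lambda^+(y) - \lambda^-(y)$ is immediate from the definition of $\Psi$, integration by parts on the drift term — with vanishing boundary contributions because $\varphi^{\pm}$ have compact support — converts it into $\int_\R(\varphi^+ - \varphi^-)(\lambda^+ - \lambda^-)\rho\,dy$. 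Reindexing $j \leftrightarrow -j$ in one of the summands of the jump term rewrites the jump contribution as $-\int_\R(\varphi^+ - \varphi^-)(\lambda^+ - \lambda^-)\rho\,dy$, and the two pieces cancel.

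The main obstacle I anticipate is the promotion of this infinitesimal identity to full semigroup invariance $\mu P(t) = \mu$, because $\mu$ need not be a probability measure — only a $\sigma$-finite Radon measure of locally bounded density. I would handle this via Dynkin's martingale formula, available from Proposition~\ref{prop:construction}: for $\varphi$ in the core, $\varphi(Y(t),J(t)) - \int_0^t L\varphi(Y(s),J(s))\,ds$ is a $\P_{y,j}$-martingale. Integrating against $\mu$ and swapping integrals by Fubini — licit because $\varphi$ has compact support, $\rho$ is locally bounded, and $\lambda$ is continuous — yields $\int_E P(t)\varphi\,d\mu - \int_E \varphi\,d\mu = \int_0^t \!\int_E P(s)L\varphi\,d\mu\,ds$. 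Combined with the Feller property (Proposition~\ref{prop:feller}), which keeps $P(s)L\varphi$ in $C_0(E)$ and permits approximation by core elements, the right-hand integrand vanishes for each $s$, yielding invariance on the core; density in $C_0(E)$ then extends this to the whole semigroup, establishing $\mu P(t) = \mu$.
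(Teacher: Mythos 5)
Your proof is correct and follows essentially the same route as the paper: the identity $a\Psi'(y)=\lambda^+(y)-\lambda^-(y)$, integration by parts on the transport term, and the reindexing $j\leftrightarrow -j$ in the jump term are exactly the paper's computation, the only cosmetic difference being that you work on the compactly supported core while the paper integrates by parts directly over $D(L)$ (using $\Psi$ bounded below so that $\varphi e^{-\Psi}$ vanishes at infinity). Your passage from the generator identity to semigroup invariance via Dynkin's formula is at least as careful as the paper's, which simply invokes ``a standard approximation argument'' at the corresponding point.
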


Under the stated assumption we can not yet make any claims as to whether $\mu$ is a finite measure. As an example consider the case in which $\lambda(y, \pm j) = \lambda_0 > 0$ for all $(y,j)$, which satisfies Assumption~\ref{ass:invariant-measure}. By Proposition~\ref{prop:invariant-measure} this corresponds to a uniform invariant density.

The proof of Proposition~\ref{prop:invariant-measure} is a simple computation that we will include here.

\begin{proof}[Proof of Proposition~\ref{prop:invariant-measure}]
Note that $\Psi$ and $\lambda$ are related by
\begin{equation}
\label{eq:inv_measure}
 a  \frac{d\Psi(y)}{d y} + \lambda^-(y)  - \lambda^+(y) = 0, \quad y \in \R.
\end{equation}
It follows from Assumption~\ref{ass:invariant-measure} that $\Psi$ is bounded from below and $\Psi(y) < \infty$ for all $y \in \R$. 
Suppose $\varphi \in D(L)$ and suppose $\mu$ is as specified. 
Then, using that $\Psi$ is bounded from below and $\varphi \in C_0(E)$ in the partial integration below,
\begin{align*}
 & \sum_{j = -1, +1} \int_{-\infty}^{\infty} L \varphi(y,j) \ d \mu(y,j) \\
 & = \sum_{j = -1, +1}\int_{-\infty}^{\infty} \left\{ a j \frac{\partial \varphi(y,j)}{\partial y} + \lambda(y,j) (\varphi(y,-j) - \varphi(y,j))\right\}  \exp(-\Psi(y)) \ d y \\
 & = \sum_{j=-1,+1}\int_{-\infty}^{\infty} \left\{  a j \frac{d \Psi(y)}{d y} -  \lambda(y,j) \right\} \varphi(y,j)  \exp(-\Psi(y)) \ d y \\
 & \quad + \sum_{k = -1,+1} \int_{-\infty}^{\infty} \lambda\left(y, - k \right) \varphi\left(y, k \right) \exp\left(- \Psi(y)\right) \ d y \\
 & = \sum_{j=-1,+1} \int_{-\infty}^{\infty} \left\{ a j \frac{d \Psi(y)}{d y} - \lambda(y,j) + \lambda(y,-j)  \right\} \varphi(y,j) \exp(-\Psi(y))  \ d y = 0.
\end{align*}
Note that we first let $k = -j$ and in the next step replaced $k$ by $j$.
It follows that 
\[ \sum_{j=-1,+1} \int_{-\infty}^{\infty} P(t) \varphi(y,j) \ d \mu(y,j) = \mu(\varphi), \quad \varphi \in D(L), t \geq 0.\]
By a standard approximation argument, this holds for any $\varphi \in B_b(E)$, and it follows that $\mu$ is invariant for $P$.
\end{proof}

\subsection{Exponential ergodicity}
We will further strengthen Assumption~\ref{ass:invariant-measure} into the following assumption, which therefore also implies Assumption~\ref{ass:construction}.

\begin{assumption}
\label{ass:lyapunov-improved}
There is a $y_0 > 0$ such that
\begin{itemize}
 \item[(i)] $\inf_{y \geq y_0} \lambda^+(y) >  \sup_{y \geq y_0} \lambda^-(y)$, and
 \item[(ii)]  $\inf_{y \leq -y_0} \lambda^-(y) > \sup_{y \leq -y_0} \lambda^+(y)$.
\end{itemize}
\end{assumption}

\begin{lemma}[Invariant measure is finite]
\label{lem:invariant_probability_measure}
Suppose Assumption~\ref{ass:lyapunov-improved} holds and $\Psi$ satisfies~\eqref{eq:inv_measure}. Then $\mu$ defined in Proposition~\ref{prop:invariant-measure} is finite, i.e. $\mu(E) < \infty$.
\end{lemma}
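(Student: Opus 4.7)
The plan is to show that $e^{-\Psi(y)}$ decays exponentially as $|y| \to \infty$, so that $\mu(E) = 2\int_{-\infty}^{\infty} e^{-\Psi(y)} \, dy < \infty$. The key input is that Assumption~\ref{ass:lyapunov-improved} gives a strictly positive gap between the two switching rates in each tail, which translates, via the defining formula for $\Psi$, into a strictly positive lower bound on $|\Psi'|$ outside a compact set.

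First I would set
\[
 \delta_+ := \inf_{y \geq y_0} \lambda^+(y) - \sup_{y \geq y_0} \lambda^-(y) > 0, \qquad \delta_- := \inf_{y \leq -y_0} \lambda^-(y) - \sup_{y \leq -y_0} \lambda^+(y) > 0,
\]
which are both strictly positive by Assumption~\ref{ass:lyapunov-improved}. By the definition of $\Psi$, this gives $\Psi'(y) = \tfrac{1}{a}(\lambda^+(y) - \lambda^-(y)) \geq \delta_+/a$ for $y \geq y_0$ and $\Psi'(y) \leq -\delta_-/a$ for $y \leq -y_0$.

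Next I would integrate these one-sided derivative bounds to obtain, for $y \geq y_0$,
\[
 \Psi(y) \geq \Psi(y_0) + \frac{\delta_+}{a}(y - y_0),
\]
and for $y \leq -y_0$,
\[
 \Psi(y) \geq \Psi(-y_0) + \frac{\delta_-}{a}(|y| - y_0).
\]
Hence $e^{-\Psi(y)} \leq C_{\pm} e^{-(\delta_{\pm}/a)|y|}$ on the two tails, for suitable constants $C_+, C_- > 0$.

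Finally I would split the integral as
\[
 \int_{-\infty}^{\infty} e^{-\Psi(y)} \, dy = \int_{-\infty}^{-y_0} e^{-\Psi(y)} \, dy + \int_{-y_0}^{y_0} e^{-\Psi(y)} \, dy + \int_{y_0}^{\infty} e^{-\Psi(y)} \, dy.
\]
The two tail integrals are finite by the exponential bounds above, and the middle integral is finite because $\Psi$ is continuous (as the antiderivative of the continuous function $\lambda^+ - \lambda^-$) and therefore bounded on the compact interval $[-y_0, y_0]$, so $e^{-\Psi}$ is bounded there. Multiplying by $2$ for the sum over $j \in \{-1,+1\}$ gives $\mu(E) < \infty$.

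There is no serious obstacle here: once the strict-inequality form of Assumption~\ref{ass:lyapunov-improved} is unpacked into the constants $\delta_{\pm}$, the result is a straightforward exponential-decay estimate on $e^{-\Psi}$. The only point requiring a moment of care is getting the sign right in the left tail, where one integrates $\Psi'$ in the negative direction from $-y_0$.
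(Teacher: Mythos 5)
Your proof is correct and follows essentially the same route as the paper's: both extract a strictly positive gap $c>0$ between $\lambda^+$ and $\lambda^-$ on each tail from Assumption~\ref{ass:lyapunov-improved}, integrate to get a linear lower bound on $\Psi$ and hence exponential decay of $e^{-\Psi}$ outside $[-y_0,y_0]$. Your treatment of the middle interval and the factor of $a$ is slightly more explicit than the paper's, but the argument is the same.
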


\begin{proof}
Using Assumption~\ref{ass:lyapunov-improved}, we have $\lambda^+(y) - \lambda^-(y) \geq c$ on $[y_0, \infty)$ for some $c > 0$. Therefore
\[ \int_{y_0}^{\infty} \exp(-\Psi(y)) \ dy \leq \int_{y_0}^{\infty} \exp\left(-\Psi(y_0) - c  (y-y_0) \right) \ d y < \infty,\]
and similarly for the integral over $(-\infty,-y_0]$.
\end{proof}

Without loss of generality, we will assume below that $\mu$ is a probability measure, i.e. $\mu(E) = 1$.
For $f : E \rightarrow [1, \infty)$ define the $f$-norm by
\[ \| \mu \|_{f} = \sup_{|g| \leq f} |\mu(g)|, \quad \mu \ \mbox{signed measure on} \ \mathcal B(E),\]
which is a stronger norm than the total variation norm. By characterizing the `petite sets'  and using a Foster-Lyapunov function (Lemmas~\ref{lem:compact-sets-are-petite} and Lemma~\ref{lem:lyapunov}, respectively, located in Section~\ref{sec:proofs}), we can establish exponential ergodicity. We acknowledge the recommendation of a referee to use the Lyapunov function of \cite{Fontbona2015} instead of our earlier construction, which allowed us to further weaken the conditions under which we obtain exponential ergodicity.  A function $V \in C(E)$ is norm-like if $\lim_{|x| \rightarrow \infty} V(x) = \infty$.

\begin{theorem}
\label{thm:exponential-ergodicity}
Suppose Assumption~\ref{ass:lyapunov-improved} holds. Then $(Y(t),J(t))_{t \geq 0}$ is exponentially ergodic, i.e. there exist constants $0 < \rho < 1$ and $0 < \kappa < \infty$ and a norm-like function $V$ such that
\[ \| \P_{y,j}((Y(t),J(t)) \in \cdot) - \mu \|_{f} \leq \kappa f(y,j) \rho^t, \quad t \geq 0,\]
where $f(y,j) = 1 + V(y,j)$.
\end{theorem}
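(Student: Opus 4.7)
The plan is to reduce the statement to the classical Foster--Lyapunov framework for $f$-exponential ergodicity of a continuous-time Markov process. The criterion (in its Down--Meyn--Tweedie formulation for continuous time) delivers the desired conclusion as soon as one has: (i) the process is non-explosive (Proposition~\ref{prop:construction}) with invariant probability $\mu$ (Proposition~\ref{prop:invariant-measure} together with Lemma~\ref{lem:invariant_probability_measure}), and all compact subsets of $E$ are petite for some skeleton chain; and (ii) there exists a norm-like $V : E \to [1,\infty)$, a compact set $C$, and constants $c > 0, d < \infty$ with $LV \leq -cV + d\,\1_C$ pointwise. Taking $f = 1 + V$ then yields
\[
\| \P_{y,j}((Y(t),J(t)) \in \cdot) - \mu \|_{f} \leq \kappa f(y,j) \rho^t, \qquad t \geq 0,
\]
for some $\kappa < \infty$ and $\rho \in (0,1)$. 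Since Lemmas~\ref{lem:compact-sets-are-petite} and~\ref{lem:lyapunov} supply (i) and (ii) respectively, the theorem reduces to assembling these ingredients.

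For Lemma~\ref{lem:compact-sets-are-petite}, the informal idea is a controllability argument for the deterministic flow. From any starting point $(y,j)$ with $|y| \leq R$, the probability of no switch up to a fixed time $t^{\star}$ is bounded below uniformly by $\exp(-t^{\star} \sup_{|z| \leq R + at^{\star}} \lambda(z, \pm 1))$, and the density of the first switching time is uniformly positive on any prescribed subinterval of $(0,t^{\star})$ by continuity of $\lambda$ on the bounded strip swept by the flow. Combining two or three skeleton time-points, one obtains a uniform minorization $K_\nu((y,j),\cdot) \geq \delta \nu_0(\cdot)$ on compact sets, which is the petite-set property.

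The core of the argument is the Lyapunov construction underlying Lemma~\ref{lem:lyapunov}, following the improvement proposed in \cite{Fontbona2015}. A natural candidate is
\[
V(y,j) = e^{\alpha y}(1 + \varepsilon j) \quad \text{on } \{y \geq y_0\}, \qquad V(y,j) = e^{-\alpha y}(1 - \varepsilon j) \quad \text{on } \{y \leq -y_0\},
\]
interpolated smoothly and positively on $\{|y| < y_0\}$ so that $V \in C^1(E)$ and $V \geq 1$. Direct use of~\eqref{eq:abstract-generator} on $\{y \geq y_0\}$ gives
\[
LV(y,j) = j \bigl[a\alpha(1 + \varepsilon j) - 2\varepsilon \lambda(y,j)\bigr] e^{\alpha y}.
\]
Write $\underline{\lambda}^+ := \inf_{y \geq y_0} \lambda^+(y)$ and $\overline{\lambda}^- := \sup_{y \geq y_0} \lambda^-(y)$; Assumption~\ref{ass:lyapunov-improved}(i) asserts $\underline{\lambda}^+ > \overline{\lambda}^-$. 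For $\varepsilon \in (0,1)$ small enough that $(1+\varepsilon)\overline{\lambda}^- < (1-\varepsilon)\underline{\lambda}^+$, one may choose $\alpha > 0$ with
\[
\frac{\overline{\lambda}^-}{1-\varepsilon} < \frac{a\alpha}{2\varepsilon} < \frac{\underline{\lambda}^+}{1+\varepsilon},
\]
and both cases $j = \pm 1$ then yield $LV \leq -cV$ on $\{y \geq y_0\}$ for a common $c > 0$. Assumption~\ref{ass:lyapunov-improved}(ii) gives the symmetric conclusion on $\{y \leq -y_0\}$. On $\{|y| \leq y_0\}$, $|LV|$ is bounded by continuity, producing the global drift inequality with exceptional set $C = \{|y|\leq y_0\} \times \{-1,+1\}$, which is petite by Lemma~\ref{lem:compact-sets-are-petite}.

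The principal obstacle is this parameter selection: the strict separation $\underline{\lambda}^+ > \overline{\lambda}^-$ in Assumption~\ref{ass:lyapunov-improved} is precisely the hypothesis needed to make the interval for $a\alpha/(2\varepsilon)$ non-empty, and thus to obtain strict drift simultaneously for $j = +1$ (which must be ``pushed'' to switch back toward the origin) and $j = -1$ (which already drifts toward the origin in the right tail). The weaker Assumption~\ref{ass:invariant-measure} does not suffice. A routine technical point is that $V$ grows exponentially and so does not lie in $D(L)$; this is handled by a standard truncation and localisation argument before passing to the $f$-norm bound.
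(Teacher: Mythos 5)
Your proposal follows exactly the paper's route: the theorem is proved by verifying the hypotheses of the continuous-time Foster--Lyapunov theorem of \cite{MeynTweedie1993-III}, with the petite-set property supplied by Lemma~\ref{lem:compact-sets-are-petite} and the drift condition by Lemma~\ref{lem:lyapunov}, whose Lyapunov function $\exp(\alpha^{\pm} y \pm \beta^{\pm}\sign(j))$ is the same as your $e^{\alpha y}(1+\varepsilon j)$ up to reparametrisation, and whose admissible parameter window is exactly the interval you exhibit for $a\alpha/(2\varepsilon)$. The one caveat concerns your informal sketch of the petite-set lemma: continuity of $\lambda$ alone does not make the first-switch density uniformly positive (in the Curie--Weiss application $\lambda(\cdot,j)$ vanishes on a half-line), and the paper's proof instead steers the $j=-1$ trajectories into the tail region where Assumption~\ref{ass:construction} guarantees $\lambda \geq \lambda_{\min}$ before the switch --- but since you invoke Lemma~\ref{lem:compact-sets-are-petite} rather than reprove it, this does not affect the validity of your argument.
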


\begin{proof}
By Lemma~\ref{lem:compact-sets-are-petite} and Lemma~\ref{lem:lyapunov}, all conditions of \cite[Theorem 6.1]{MeynTweedie1993-III} are satisfied, so that the stated result follows.
\end{proof}

\subsection{Application to Curie-Weiss}

In the Curie-Weiss model, the generator obtained in Theorems~\ref{thm:LMH-supercritical-temperature} and \ref{thm:LMH-critical-temperature} is given by~\eqref{eq:abstract-generator} with
$a = a(h,\beta)$ given by~\eqref{eq:LMH-drift} and $\lambda(y,j) = \max\left(0, j \frac{d \Psi(y)}{d y} \right)$, with
\[ \Psi(y) = \left\{ \begin{array}{ll} y^4/12 \quad & \beta = 1, h = 0, \\
                      \half l(h,\beta) y^2, \quad & 0 \leq \beta < 1, h \in \R,
                     \end{array} \right.\]
with $l(h, \beta)$ given by~\eqref{eq:drift-high-temperature}. In particular $\lambda(y,j) > 0$ for $j y > 0$ and $\lambda(y,j) = 0$ for $j y \leq 0$. It follows that Assumption~\ref{ass:lyapunov-improved} is satisfied, taking any $y_0 >0$. Assumptions~\ref{ass:construction} and~\ref{ass:invariant-measure} are weaker than Assumption~\ref{ass:lyapunov-improved}. To summarize, we have the following corollary.

\begin{corollary}
$L$ given by~\eqref{eq:abstract-generator}, with $a$ and $\lambda(y,j)$ as above, is the generator of a Markov-Feller transition semigroup on $C_0(E)$. The associated Markov process $(Y,J)$ has finite invariant measure $\mu$ on $E$ as in Proposition~\ref{prop:invariant-measure} and is exponentially ergodic.
\end{corollary}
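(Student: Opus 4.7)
The corollary is a direct application of the abstract results developed throughout Section~\ref{sec:limiting-process} to the specific $(a,\lambda)$ arising from the Curie-Weiss LMH scaling limits. My plan is therefore to verify that the strongest hypothesis used in the section, namely Assumption~\ref{ass:lyapunov-improved}, is satisfied in both the supercritical and critical Curie-Weiss cases; once this is done, every conclusion of the corollary follows by quoting the appropriate earlier result.

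First I would read off the two switching rates explicitly. In the supercritical regime $0 \leq \beta < 1$, $h \in \R$ we have $\Psi(y) = \tfrac12 l(h,\beta) y^2$, so
\[
 \lambda^+(y) = \max(0, l(h,\beta) y), \qquad \lambda^-(y) = \max(0, -l(h,\beta) y),
\]
and at the critical point $\beta = 1$, $h = 0$ we have $\Psi(y) = y^4/12$, so
\[
 \lambda^+(y) = \max(0, y^3/3), \qquad \lambda^-(y) = \max(0, -y^3/3).
\]
In both cases $\lambda$ is continuous and $a > 0$, matching the standing setup of Section~\ref{sec:limiting-process}. A small preliminary check I would include is that $l(h,\beta) > 0$ whenever $0 \leq \beta < 1$: this follows from $\beta(1-|m_0|)(1+|m_0|) = \beta(1-m_0^2) < 1$, so that $\beta(1-|m_0|) < 1/(1+|m_0|)$; this ensures $\lambda^\pm$ are not identically zero.

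Next I would verify Assumption~\ref{ass:lyapunov-improved} by taking any $y_0 > 0$. In both cases, for $y \geq y_0$ we have $\lambda^-(y) = 0$ and $\lambda^+(y) \geq c(y_0) > 0$ (with $c(y_0) = l(h,\beta) y_0$ or $y_0^3/3$), so condition (i) holds. Symmetrically, for $y \leq -y_0$ we have $\lambda^+(y) = 0$ and $\lambda^-(y) \geq c(y_0) > 0$, giving (ii). As noted in the paragraph preceding the corollary, Assumption~\ref{ass:lyapunov-improved} implies both Assumption~\ref{ass:construction} and Assumption~\ref{ass:invariant-measure}.

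Finally I would assemble the conclusions. Proposition~\ref{prop:construction} supplies the non-explosive strong Markov process with generator equal to the closure of~\eqref{eq:abstract-generator}, Proposition~\ref{prop:feller} gives the Feller property on $C_0(E)$, Proposition~\ref{prop:invariant-measure} identifies the invariant measure $\mu$ with density $\exp(-\Psi(y))$ with respect to $\mathrm{Leb} \otimes (\delta_{-1} + \delta_{+1})$, Lemma~\ref{lem:invariant_probability_measure} shows $\mu$ is finite, and Theorem~\ref{thm:exponential-ergodicity} yields exponential ergodicity. There is no genuine obstacle here: the only thing to check is the verification of Assumption~\ref{ass:lyapunov-improved}, and this reduces to observing that in both regimes the rates $\lambda^\pm$ vanish on the wrong half-line and are bounded below by a positive constant on the right half-line outside any neighbourhood of the origin.
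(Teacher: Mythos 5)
Your proposal is correct and follows essentially the same route as the paper: the paper verifies Assumption~\ref{ass:lyapunov-improved} in the paragraph preceding the corollary (noting $\lambda(y,j)>0$ for $jy>0$ and $\lambda(y,j)=0$ for $jy\leq 0$) and then simply cites Propositions~\ref{prop:construction},~\ref{prop:feller},~\ref{prop:invariant-measure} and Theorem~\ref{thm:exponential-ergodicity}. Your additional check that $l(h,\beta)>0$ via $\beta(1-m_0^2)<1$ is a worthwhile detail the paper leaves implicit, but it does not change the argument.
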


\begin{proof}
This is a combination of Propositions~\ref{prop:construction},~\ref{prop:feller},~\ref{prop:invariant-measure} and Theorem~\ref{thm:exponential-ergodicity}.
\end{proof}

\section{Proofs}
\label{sec:proofs}

\subsection{Estimates on Metropolis-Hastings applied to Curie-Weiss}

We can easily compute the difference in interaction energy for increments in $\eta$,
\begin{equation} \label{eq:phi-difference} \Phi^n(\eta) - \Phi^n(\eta \pm 2 n^{\gamma - 1}) = \pm 2 n^{-\gamma} \eta \pm 2 (m_0+h) + 2 n^{-1}.
\end{equation}
Combined with~\eqref{eq:transition-probs-eta-rw} and~\eqref{eq:MH-definition}, it follows that
\begin{equation}
\begin{aligned}
 \label{eq:MH-transitions} p^n_{\pm}(\eta) & := P^n(\eta, \eta \pm 2 n^{\gamma - 1}) \\
 & = \half ( 1 \mp (m_0 + n^{-\gamma} \eta)) \left( 1 \wedge \exp \left( \beta \{ \pm 2 n^{-\gamma} \eta \pm 2 (m_0+h) + 2 n^{-1} \} \right) \right).
 \end{aligned}
\end{equation}
Due to the possibility of rejection, there will be positive mass on transition probabilities $P^n(\eta, \eta)$. These values are fully determined by the off-diagonal transition probabilities and will not appear in the analysis below.

To rephrase slightly, for $\eta \in X^n$, define probability distributions $\P^n_{\eta}$ on $X^n$, and let $Y$ denote $X^n$-valued random variables with distribution $P^n(\eta, \cdot)$. In other words, under $\P^n_{\eta}$, $Y$ is distributed according to $P^n(\eta, \cdot)$. Expectation with respect to $\P^n_{\eta}$ will be denoted by $\E_{\eta}^n$, so that $\E_{\eta}^n[ \varphi(Y)] = P^n \varphi(\eta)$ for $\varphi : X^n \rightarrow \R$.

We will be particularly interested in values of $\eta$ that are concentrated on the following sets
\begin{equation} \label{eq:definition-F} F^{n,\delta} := \{ \eta \in X^n : |\eta| \leq n^{\delta} \},\end{equation}
where $\delta < \gamma$.
In the computations that follow, we will frequently need to approximate the exponent in the Metropolis-Hastings acceptance probability by its Taylor approximation. The following lemma helps in determining the required order of approximation.
Let $p_k(x)$ denote the $k$-th order Taylor approximation of $\exp(x)$, i.e. $p_k(x) = \sum_{i=0}^k \frac {x^i}{i !}$. Define approximate transition probabilities
\begin{equation}
\label{eq:simplified-transitions}
\begin{aligned}
 p^{n,k}_{\pm}(\eta) :=  & \half (1 \mp(m_0 + n^{-\gamma} \eta)) \\
 & \hfill \times \left\{ \begin{array}{ll}  1 \quad & \mbox{if $h +m_0 =0$ and $\pm \eta \geq 0$,} \\
                                 p_k \left( \pm 2 \beta n^{-\gamma} \eta \right) \quad & \mbox{if $h+m_0 =0$ and $\pm \eta < 0$,} \\
                                \exp \left( \pm 2 \beta (m_0 + h) \right) p_k \left(\pm 2 \beta n^{-\gamma} \eta \right), \quad & \mbox{if $\pm (h + m_0) < 0$}, \\
                                1 \quad & \mbox{if $\pm(h+m_0) > 0$}.
                               \end{array}\right.
\end{aligned}
\end{equation}
For example, if $\eta < 0$, and $h+m_0 = 0$, then
\[ p^{n,k}_+(\eta) = \half (1 - (m_0 + n^{-\gamma} \eta)) p_k(2 \beta n^{-\gamma} \eta).\]

\begin{lemma}
\label{lem:simplify}
Let $0 < \delta < \gamma < 1$. Suppose $h \neq 0$, $\beta > 0$ or $h = 0$, $0 \leq \beta \leq 1$. 
Then, for $r < \min(1, (k+1) (\gamma-\delta))$,
\[ \lim_{n \rightarrow \infty} \sup_{\eta \in F^{n, \delta}} n^r \left|p^n_{\pm}(\eta) - p^{n,k}_{\pm}(\eta)\right|  \rightarrow 0. \]
\end{lemma}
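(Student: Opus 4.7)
The strategy is to factor out of both $p^n_\pm(\eta)$ and $p^{n,k}_\pm(\eta)$ the common random-walk weight $\half(1 \mp (m_0 + n^{-\gamma}\eta))$, which is uniformly bounded on $F^{n,\delta}$ since $|m_0| \leq 1$ and $|n^{-\gamma}\eta| \leq n^{\delta-\gamma} = o(1)$. It then suffices to show
\[ \sup_{\eta \in F^{n,\delta}} n^r \bigl| A^n_\pm(\eta) - \tilde A^{n,k}_\pm(\eta) \bigr| \longrightarrow 0,\]
where $A^n_\pm(\eta) := 1 \wedge \exp\bigl(\beta\{\pm 2 n^{-\gamma}\eta \pm 2(m_0+h) + 2 n^{-1}\}\bigr)$ and $\tilde A^{n,k}_\pm(\eta)$ is the case-dependent factor from~\eqref{eq:simplified-transitions}. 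From here I would split into cases according to the sign of $h+m_0$; under the hypothesis this sign is either non-zero (when $h \neq 0$, since then $m_0$ inherits the sign of $h$) or zero (when $h = 0$ and $0 \leq \beta \leq 1$, which forces $m_0 = 0$).

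In the non-degenerate case, suppose WLOG $h+m_0 > 0$. For $n$ large, the corrections $\pm 2n^{-\gamma}\eta + 2n^{-1}$ are uniformly dominated in absolute value by $2(h+m_0)$ over $F^{n,\delta}$, so the full exponent has the sign of $\pm(h+m_0)$. Hence $A^n_+ \equiv 1 \equiv \tilde A^{n,k}_+$ identically on $F^{n,\delta}$, and $A^n_-(\eta) = \exp\bigl(\beta(-2 n^{-\gamma}\eta - 2(m_0+h) + 2n^{-1})\bigr)$. Writing $x = -2\beta n^{-\gamma}\eta$ and $y = 2\beta n^{-1}$, I would then apply the identity
\[ e^{x+y} - p_k(x) = e^x\bigl(e^y - 1\bigr) + \bigl(e^x - p_k(x)\bigr),\]
combined with (i) the uniform bound $e^x \leq C$ on $F^{n,\delta}$, (ii) $|e^y - 1| = O(n^{-1})$, and (iii) the Lagrange remainder $|e^x - p_k(x)| = O(|x|^{k+1}) = O(n^{(k+1)(\delta - \gamma)})$, to conclude $|A^n_- - \tilde A^{n,k}_-| = O(n^{-1}) + O(n^{(k+1)(\delta-\gamma)})$ uniformly on $F^{n,\delta}$. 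The case $h + m_0 < 0$ is the mirror image.

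The degenerate case $h+m_0 = 0$ is the main obstacle, because the $2n^{-1}$ contribution to the exponent is no longer absorbed by a sign-determined constant and can swing the sign of the exponent within a strip of width $O(n^{\gamma-1})$ around $\eta = 0$. For $p^n_+$ I would therefore split on three regions: if $\eta \geq 0$ both factors equal $1$ exactly; if $-n^{\gamma-1} \leq \eta < 0$ then $A^n_+ = 1$ while $\tilde A^{n,k}_+ = p_k(2\beta n^{-\gamma}\eta)$ with $|2\beta n^{-\gamma}\eta| \leq 2\beta n^{-1}$, giving $|1 - p_k(\cdot)| = O(n^{-1})$ by a direct Taylor bound; and if $\eta < -n^{\gamma-1}$ the two-term decomposition above applies verbatim, with $e^{2\beta n^{-\gamma}\eta} \leq 1$ supplying the needed uniform bound on the prefactor. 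The analysis of $p^n_-$ is mirror-symmetric. In every case $|A^n_\pm - \tilde A^{n,k}_\pm| = O(n^{-1}) + O(n^{(k+1)(\delta-\gamma)})$ uniformly on $F^{n,\delta}$, so multiplying by $n^r$ and invoking $r < \min(1, (k+1)(\gamma - \delta))$ yields the claim.
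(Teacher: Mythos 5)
Your proof is correct and follows essentially the same route as the paper's: both isolate the $O(n^{-1})$ exponent correction and the Lagrange remainder $O(n^{(k+1)(\delta-\gamma)})$ of the Taylor polynomial, and both finish with a case split on the sign of $h+m_0$. The only organizational difference is that the paper defers the case analysis by using $1$-Lipschitz continuity of $x\mapsto 1\wedge e^{-x}$ and of $x\mapsto 1\wedge x$ to strip the two error terms before resolving the minimum, whereas you resolve the sign of the exponent up front --- which is why you need (and correctly supply) the extra treatment of the strip $-n^{\gamma-1}\le\eta<0$ in the degenerate case $h+m_0=0$.
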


\begin{proof}
The result is trivial in case $\beta = 0$. In the remainder therefore assume $\beta > 0$. Define
\[ \widetilde p^n_{\pm}(\eta) := \half ( 1 \mp (m_0 + n^{-\gamma} \eta)) \left( 1 \wedge \exp \left(  \pm  2 \beta \{n^{-\gamma} \eta + (m_0+h)\} \right) \right).\]
(This is just $p^n_{\pm}(\eta)$ without the $O(n^{-1})$ term in the exponent.)
We estimate, using 1-Lipschitz continuity of $x \mapsto 1 \wedge e^{-x}$ (for $x \geq 0$), $|m_0 + n^{-\gamma} \eta | \leq 1$, and $r < 1$,
\begin{align*}
\sup_{\eta \in F^{n, \delta}} n^r \left|p^n_{\pm}(\eta) - 
\widetilde p^n_{\pm}(\eta)\right| & \leq \sup_{\eta \in F^{n,\delta}} \half | 1 \mp (m_0 + n^{-\gamma} \eta)| 2 \beta n^{r-1} \leq 2 \beta n^{r-1} \rightarrow 0
\end{align*}
as $n \rightarrow \infty$.
For $k \in \N$, define further approximate transition probabilities
\[\widetilde p^{n,k}_{\pm}(\eta)= \half ( 1 \mp (m_0 + n^{-\gamma} \eta)) \min \left\{ 1, \exp\left(\pm 2 \beta (m_0 + h) \right) p_k\left(\pm 2 \beta n^{-\gamma} \eta \right) \right\}.\]
Then, using 1-Lipschitz continuity of $x \mapsto 1 \wedge x$,  $|n^{-\gamma} \eta| \leq n^{\delta -\gamma} \leq 1$ on $F^{n,\delta}$, 
and $|p_k(x) - \exp(x)| \leq e^{\xi} \frac{x^{k+1}}{(k+1)!}$ for some $\xi \in (\min(0, x), \max(0,x))$, it follows that
\begin{align*}
\sup_{\eta \in F^{n,\delta}} n^r \left| \widetilde p^n_{\pm}(\eta) - \widetilde p^{n,k}_{\pm}(\eta) \right| & \leq n^r  \exp(\pm 2 \beta (m_0 +h) ) \left| \exp(\pm 2 \beta n^{-\gamma} \eta) - p_k(\pm 2 \beta n^{-\gamma} \eta) \right| \\
& \leq n^r \exp(\pm 2 \beta (m_0 +h) ) \exp( 2 \beta )\frac {(2 \beta n^{-\gamma} \eta)^{k+1}}{(k+1)!} \\
& = c n^{r  -(k+1) (\gamma-\delta)} \rightarrow 0
\end{align*}
as $n \rightarrow \infty$.
In the limit as $n \rightarrow \infty$, the minimization in the expression for $\widetilde p^{n,k}_{\pm}(\eta)$ will only depend on the lowest order terms. Since the convergence of $n^{-\gamma} \eta$ is uniform on $F_{n,\delta}$, the stated result follows after distinguishing cases for $h + m_0 = 0$ and $h + m_0 \neq 0$.
\end{proof}

As a first example of the use of Lemma~\ref{lem:simplify}, we have the following result for the second moment of Metropolis-Hastings updates. We introduce a multiplicative factor $n^{\alpha}$ which will represent speeding up the Markov chain: within a time interval of length $t \in \R$ we will make $N(t)$ switches according to $P^n$, where $N(t) \sim \mathrm{Poisson}(n^{\alpha} t)$. One of the results of our analysis is the correct value of $\alpha$ for which a suitable scaling limit is obtained, which turns out to be related to $\gamma$ by $\alpha = 2(1-\gamma)$.

\begin{lemma}[Metropolis-Hastings second moment for Curie-Weiss]
\label{lem:diffusion-general-temperature}
Let $0 < \delta < \gamma < 1$. Let $\alpha = 2(1-\gamma)$. Suppose $h \neq 0$, $\beta > 0$ or $h = 0$, $0 \leq \beta \leq 1$. 
Define
\begin{equation}
 \label{eq:diffusion-general-temperature} \sigma(h,\beta) := 2 \sqrt{1 - |m_0(h,\beta)|}.
\end{equation}
Then 
\begin{equation} \label{eq:convergence-second-moment-mh} \lim_{n \rightarrow \infty} \sup_{\eta \in F^{n,\delta}} |n^{\alpha} \E_{\eta}^n[(Y-\eta)^2] - \sigma(h,\beta)^2| = 0.\end{equation}
\end{lemma}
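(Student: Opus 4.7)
The plan is to reduce the computation of the second moment to the sum $p^n_+(\eta) + p^n_-(\eta)$ of the two off-diagonal transition probabilities, replace these by their $k=0$ Taylor approximations $p^{n,0}_\pm$ via Lemma~\ref{lem:simplify}, and then take the pointwise limit of the approximations and use the Curie–Weiss fixed-point equation $m_0 = \tanh(\beta(m_0+h))$ to identify the limit as $1 - |m_0|$.

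More precisely, under $\P^n_\eta$ the random variable $Y$ takes values in $\{\eta - 2n^{\gamma-1}, \eta, \eta + 2n^{\gamma-1}\}$, so
\[ \E_\eta^n[(Y-\eta)^2] = (2n^{\gamma-1})^2\bigl(p^n_+(\eta) + p^n_-(\eta)\bigr) = 4 n^{2\gamma-2}\bigl(p^n_+(\eta) + p^n_-(\eta)\bigr). \]
With $\alpha = 2(1-\gamma)$ this gives the key identity $n^\alpha \E_\eta^n[(Y-\eta)^2] = 4\bigl(p^n_+(\eta) + p^n_-(\eta)\bigr)$, so the claim reduces to showing
\[ \lim_{n\to\infty}\sup_{\eta\in F^{n,\delta}}\Bigl| p^n_+(\eta) + p^n_-(\eta) - (1 - |m_0|) \Bigr| = 0. \]

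To establish this I would apply Lemma~\ref{lem:simplify} with $k = 0$ and $r = 0$ (the condition $0 < \min(1,\gamma-\delta)$ is trivially satisfied), so that it suffices to prove the same limit with $p^n_\pm$ replaced by $p^{n,0}_\pm$. Since $|n^{-\gamma}\eta| \leq n^{\delta-\gamma} \to 0$ uniformly on $F^{n,\delta}$, the prefactor $\tfrac{1}{2}(1 \mp (m_0 + n^{-\gamma}\eta))$ in~\eqref{eq:simplified-transitions} converges uniformly to $\tfrac{1}{2}(1 \mp m_0)$, and $p_0 \equiv 1$, so only the constant acceptance factors survive. One then splits into three cases by the sign of $h + m_0$ (noting that $h+m_0 = 0$ forces $h = 0 = m_0$ by the Curie–Weiss equation), and in each case computes the limit.

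The only substantive identity needed is that the Curie–Weiss equation $m_0 = \tanh(\beta(m_0+h))$ is equivalent to $\exp(2\beta(m_0+h)) = (1+m_0)/(1-m_0)$. For instance if $h + m_0 > 0$ (so $m_0 = |m_0|$), then $p^{n,0}_+(\eta) \to \tfrac{1}{2}(1-m_0)$ and
\[ p^{n,0}_-(\eta) \to \tfrac{1}{2}(1+m_0)\exp(-2\beta(m_0+h)) = \tfrac{1}{2}(1+m_0)\cdot\frac{1-m_0}{1+m_0} = \tfrac{1}{2}(1-m_0), \]
whose sum is $1 - m_0 = 1 - |m_0|$; the case $h+m_0 < 0$ is symmetric, and the degenerate case $h=m_0=0$ gives $\tfrac{1}{2} + \tfrac{1}{2} = 1 = 1 - |m_0|$. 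Multiplying by $4$ yields $\sigma(h,\beta)^2 = 4(1-|m_0|)$, as required.

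There is no real obstacle: Lemma~\ref{lem:simplify} has already done the analytic work of justifying that the $O(n^{-1})$ correction in the Hamiltonian difference and the deviation of $\min(1,e^x)$ from its constant leading part are negligible. The only care required is organizing the case analysis by the sign of $h + m_0$ and verifying that the convergence of the prefactor is uniform on $F^{n,\delta}$, which it is since $n^{-\gamma}\sup_{F^{n,\delta}}|\eta| \leq n^{\delta-\gamma} \to 0$.
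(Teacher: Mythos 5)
Your proposal is correct and follows essentially the same route as the paper's proof: the identity $n^{\alpha}\E_{\eta}^n[(Y-\eta)^2] = 4(p^n_+(\eta)+p^n_-(\eta))$, an application of Lemma~\ref{lem:simplify} with $k=0$, $r=0$, uniform vanishing of $n^{-\gamma}\eta$ on $F^{n,\delta}$, and a case analysis on the sign of $h+m_0$ using $\exp(2\beta(m_0+h))=(1+m_0)/(1-m_0)$ to identify the limit as $1-|m_0|$. No gaps.
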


\begin{proof}
We have 
\begin{align*}
 n^{\alpha} \E_{\eta}^n[(Y-\eta)^2] =  n^{\alpha} (p^n_+(\eta) + p^n_-(\eta)) (2 n^{\gamma - 1})^2 = 4  (p^n_+(\eta) + p^n_-(\eta)).
\end{align*}
We may apply Lemma~\ref{lem:simplify} with $r = 0$ and $k = 0$, to deduce that
\[ \lim_{n \rightarrow \infty} \sup_{\eta \in F^{n,\delta}} \left| n^{\alpha} \E[(Y-\eta)^2]  - 4(p^{n,0}_+(\eta) + p^{n,0}_-(\eta)) \right| = 0,\]
where
\begin{align*}& p^{n,0}_+(\eta) + p^{n,0}_-(\eta) \\
& = \half (1 -m_0 - n^{-\gamma} \eta) \left( 1 \wedge \exp(2 \beta (m_0+h)) \right) + \half (1 + m_0 + n^{-\gamma} \eta) \left( 1 \wedge \exp(-2 \beta (m_0+h)) \right).\end{align*} 
On $F^{n,\delta}$ we have $|n^{-\gamma} \eta| \leq n^{\delta- \eta} \rightarrow 0$, so the remaining dependence on $n$ in the above expression vanishes asymptotically, and we conclude that~\eqref{eq:convergence-second-moment-mh} holds for
\[\sigma(h,\beta) = \left\{ 2(1-m_0) \left( 1 \wedge \exp(2 \beta (m_0+h)) \right) + 2 (1 + m_0) \left( 1 \wedge \exp(-2 \beta (m_0+h)) \right) \right\}^{1/2}.\]
Distinguishing cases and using~\eqref{eq:curie-weiss}, this is equal to the stated expression for $\sigma$.\end{proof}

Another useful observation is that higher order moments of $Y - \eta$ vanish:

\begin{lemma}
\label{lem:higher-moments}
Let $0 < \delta < \gamma < 1$ and let $\alpha = 2(1-\gamma)$. Suppose $h \neq 0$, $\beta > 0$ or $h = 0$, $0 \leq \beta \leq 1$. Then
\[ \lim_{n \rightarrow \infty} \sup_{\eta \in F^{n,\delta}} n^{\alpha} \E^n_{\eta}[|Y-\eta|^p] = 0 \]
for any $p > 2$.
\end{lemma}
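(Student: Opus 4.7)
The plan is to exploit the very limited support of the increment $Y-\eta$ under the random walk proposal. Since $P^n(\eta,\cdot)$ is supported on $\{\eta - 2n^{\gamma-1},\ \eta,\ \eta + 2n^{\gamma-1}\}$, we immediately have
\[
\mathbb{E}^n_{\eta}\bigl[|Y-\eta|^p\bigr] \;=\; (2 n^{\gamma-1})^p \bigl( p^n_+(\eta) + p^n_-(\eta) \bigr).
\]
Multiplying by $n^{\alpha} = n^{2(1-\gamma)}$ gives the identity
\[
n^{\alpha}\, \mathbb{E}^n_{\eta}\bigl[|Y-\eta|^p\bigr] \;=\; 2^{p}\, n^{(p-2)(\gamma-1)} \bigl(p^n_+(\eta) + p^n_-(\eta)\bigr),
\]
which is the whole computation in disguise.

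Now I would finish in one line: since $\gamma < 1$ and $p > 2$, the exponent $(p-2)(\gamma-1)$ is strictly negative, so $n^{(p-2)(\gamma-1)} \to 0$. The factor $p^n_+(\eta) + p^n_-(\eta)$ is at most $2$ uniformly in $\eta$ (and in fact uniformly in $n$) because these are probabilities of disjoint events under $P^n(\eta,\cdot)$. Taking the supremum over $\eta \in F^{n,\delta}$ therefore yields a bound of $2^{p+1} n^{(p-2)(\gamma-1)}$, which vanishes as $n \to \infty$.

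The \emph{main obstacle} here is not really an obstacle: there is no need to invoke Lemma~\ref{lem:simplify} (as was done for the second-moment computation in Lemma~\ref{lem:diffusion-general-temperature}) because we only seek a one-sided bound, not an exact limit. The trivial estimate $p^n_\pm(\eta) \leq 1$ is enough; all the work is absorbed into the observation that the jumps are of size $O(n^{\gamma-1})$, so raising them to a power strictly larger than $2$ beats the time-speed-up factor $n^{2(1-\gamma)}$ by a polynomial margin. This also makes clear why $\alpha = 2(1-\gamma)$ is precisely the correct diffusive scaling: it is the unique exponent for which the second-moment rate stabilizes while all higher-order moment rates vanish.
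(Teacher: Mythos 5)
Your proposal is correct and follows essentially the same route as the paper: both compute $\E^n_{\eta}[|Y-\eta|^p]$ exactly from the two-point support of the increment, multiply by $n^{\alpha}=n^{2(1-\gamma)}$, and conclude from the trivial bound on $p^n_+(\eta)+p^n_-(\eta)$ together with the strictly negative exponent $(p-2)(\gamma-1)$. The only cosmetic difference is that the paper bounds the sum of the two transition probabilities by $1$ (they are probabilities of disjoint events) where you use $2$, which changes nothing.
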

\begin{proof}
We have 
\begin{align*} \lim_{n \rightarrow \infty} \sup_{\eta \in F^{n,\delta}} n^{\alpha} \E^n_{\eta}[|Y-\eta|^p] & = \lim_{n \rightarrow \infty} \sup_{\eta \in F^{n,\delta}} 2^p  (p^n_+(\eta) + p^n_-(\eta))  n^{(2 -p)(1 - \gamma)}  = 0,
\end{align*}
using that the sum of the probabilities is bounded by 1.
\end{proof}

\subsubsection{Supercritical temperature regime}

We already mentioned that in the supercritical temperature case ($0 \leq \beta < 1$), the correct scaling of the magnetization would be $\gamma = \half$. 

\begin{lemma}
\label{lem:concentration-high-temperature}
Suppose $\gamma = \half$ and $\delta \in (0, \half)$. If $0 \leq \beta < 1$, then for any $\alpha > 0$,
\[ \lim_{n \rightarrow \infty} n^{\alpha} \pi^n\left(\eta^n(x) \notin F^{n,\delta}\right) = 0.\]
\end{lemma}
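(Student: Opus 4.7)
The plan is to translate the event $\{\eta^n \notin F^{n,\delta}\}$ back into an event about $m^n$ and then apply Proposition~\ref{prop:concentration-Curie-Weiss}(i). Since $\gamma = 1/2$, the event $\{|\eta^n(x)| > n^\delta\}$ is exactly $\{|m^n - m_0| > n^{\delta - 1/2}\}$. The concentration inequality gives control on the deviation of $m^n$ from $\tanh(\beta(m^n + h))$, not from $m_0$, so the key bridge is to bound $|m^n - m_0|$ in terms of $|m^n - \tanh(\beta(m^n + h))|$.

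For this, I would introduce $g(m) = m - \tanh(\beta(m+h))$, which vanishes at $m_0$ by~\eqref{eq:curie-weiss}, and observe that
\[
 g'(m) = 1 - \beta\bigl(1 - \tanh^2(\beta(m+h))\bigr) \geq 1 - \beta,
\]
which is strictly positive precisely because $\beta < 1$. Hence $g$ is strictly monotone with derivative bounded below by $1-\beta$, and so
\[
 |m - m_0| \leq \frac{1}{1-\beta}\,|g(m)| = \frac{1}{1-\beta}\,|m - \tanh(\beta(m+h))|
\]
for every $m \in [-1,1]$. This is the one step that genuinely uses the strict inequality $\beta < 1$, and it is the reason the analogous statement fails in the subcritical regime.

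Now apply Proposition~\ref{prop:concentration-Curie-Weiss}(i): the event $\{|m^n - m_0| \geq n^{\delta - 1/2}\}$ is contained in the event $\{|m^n - \tanh(\beta(m^n + h))| \geq (1-\beta) n^{\delta - 1/2}\}$. Choose $t$ so that $\beta/n + t/\sqrt{n} = (1-\beta) n^{\delta - 1/2}$, i.e.\ $t = (1-\beta) n^\delta - \beta n^{-1/2}$. For all sufficiently large $n$ this gives $t \geq \tfrac{1}{2}(1-\beta) n^\delta$, and Proposition~\ref{prop:concentration-Curie-Weiss}(i) yields
\[
 \pi^n\bigl(\eta^n(x) \notin F^{n,\delta}\bigr) \leq 2 \exp\!\left( -\frac{(1-\beta)^2 n^{2\delta}}{16(1+\beta)} \right).
\]

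Since $\delta > 0$, the right-hand side decays faster than any inverse polynomial in $n$, so $n^\alpha$ times this bound tends to $0$ for every $\alpha > 0$. No further subtlety is needed; the only obstacle is the monotonicity argument above, which is really where the assumption $\beta < 1$ enters (as flagged in the remark following Proposition~\ref{prop:concentration-Curie-Weiss}).
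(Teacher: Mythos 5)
Your proof is correct and follows essentially the same route as the paper: rewrite the event as $\{|m^n - m_0| > n^{\delta - 1/2}\}$, use the mean value theorem (your explicit derivative bound $g'(m) \geq 1-\beta$ is exactly the paper's inequality $|m - \tanh(\beta(m+h))| \geq (1-\beta)|m - m_0|$), and then apply Proposition~\ref{prop:concentration-Curie-Weiss}(i) with the same choice of $t$. The only difference is that you spell out where $\beta < 1$ enters, which the paper leaves to a remark.
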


\begin{proof}
Note
\[ \{ x : \eta^n(x) \notin F^{n, \delta} \} = \{x : |m^n(x) -m_0| > n^{\delta - \gamma}\}.\]
By the mean value theorem, $|m - \tanh(\beta(m+h))| \geq (1 -\beta) |m-m_0|$ for $m \in \R$.
Therefore, using Proposition~\ref{prop:concentration-Curie-Weiss}, with $t_n := ((1-\beta) n^{\delta - \gamma} - \beta n^{-1}) n^{1/2}$, we find that
\begin{align*}
\pi^n(|m^n(x) - m_0| \geq n^{\delta - \gamma}) & \leq \pi^n(|m - \tanh \beta(m+h)| \geq (1-\beta) n^{\delta - \gamma}) \\ & = \pi^n\left(|m - \tanh \beta(m+h)| \geq \frac{\beta}{n} + \frac{t_n}{\sqrt{n}}\right)  \leq 2 \exp\left(-\frac{t_n^2}{4(1+\beta)}\right) 
\end{align*}
from which the result follows, using that $t_n \sim n^{1/2 + \delta - \gamma} \rightarrow \infty$.
\end{proof}

\begin{lemma}
\label{lem:drift-high-temperature}
Suppose $0 \leq \beta < 1$, $\gamma = \half$, $\alpha = 2(1-\gamma) = 1$ and $\delta \in (0, 1/4)$. Let
\begin{equation}
\label{eq:drift-high-temperature}
l(h,\beta) := \frac{1}{1 + |m_0|} -\beta(1 - |m_0|)
\end{equation}
Then $\lim_{n \rightarrow \infty} \sup_{\eta \in F^n} |n^{\alpha} \E^n_{\eta}[ Y - \eta] + 2 l(h,\beta) \eta | = 0$.
\end{lemma}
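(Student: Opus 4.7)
The plan is to reduce the drift computation to a Taylor expansion of the Metropolis-Hastings acceptance probabilities. First write
\[
\E^n_\eta[Y - \eta] \;=\; 2 n^{\gamma - 1}\bigl(p^n_+(\eta) - p^n_-(\eta)\bigr),
\]
so that with $\alpha = 1$ and $\gamma = \half$ the quantity of interest becomes
\[
n^\alpha \E^n_\eta[Y-\eta] + 2l(h,\beta)\eta \;=\; 2\bigl[n^{1/2}\bigl(p^n_+(\eta) - p^n_-(\eta)\bigr) + l(h,\beta)\eta\bigr].
\]
Thus it suffices to prove that $n^{1/2}(p^n_+(\eta) - p^n_-(\eta)) + l(h,\beta)\eta \to 0$ uniformly on $F^{n,\delta}$.

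Next I would apply Lemma~\ref{lem:simplify} with $r = 1/2$ and $k = 1$. Since $\delta \in (0, 1/4)$ we have $(k+1)(\gamma - \delta) = 1 - 2\delta > 1/2$, so the lemma gives $\sup_{\eta \in F^{n,\delta}} n^{1/2}|p^n_\pm(\eta) - p^{n,1}_\pm(\eta)| \to 0$. This reduces the claim to showing that $n^{1/2}(p^{n,1}_+(\eta) - p^{n,1}_-(\eta)) \to -l(h,\beta)\eta$ uniformly on $F^{n,\delta}$.

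The main work is then the explicit computation of $p^{n,1}_+ - p^{n,1}_-$ from the definition~\eqref{eq:simplified-transitions} with $p_1(x) = 1 + x$, treating the three cases distinguished in that definition: (a) $h + m_0 = 0$ (which for $0 \leq \beta < 1$ forces $h = 0$ and $m_0 = 0$); (b) $h + m_0 > 0$; and (c) $h + m_0 < 0$. In case (b), using the Curie-Weiss identity~\eqref{eq:curie-weiss} in the form $\exp(-2\beta(m_0+h)) = (1-m_0)/(1+m_0)$, one finds after expanding products and collecting powers of $n^{-1/2}\eta$ that
\[
p^{n,1}_+(\eta) - p^{n,1}_-(\eta) \;=\; n^{-1/2}\eta\Bigl[-\tfrac{1}{1+m_0} + \beta(1-m_0)\Bigr] + O(n^{-1}\eta^2),
\]
with an analogous identity in case (c) in which $m_0$ is replaced by $-m_0$, and in case (a) directly yielding the factor $-(1-\beta)$. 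In every case the leading coefficient equals $-l(h,\beta)$ when written in terms of $|m_0|$.

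Finally, the remainder is uniformly bounded by $C n^{-1}\eta^2 \leq C n^{2\delta - 1}$ on $F^{n,\delta}$, and $2\delta - 1 < -1/2$ since $\delta < 1/4$, so multiplication by $n^{1/2}$ still gives an $o(1)$ term uniformly in $\eta \in F^{n,\delta}$. Combining this with the approximation bound from Lemma~\ref{lem:simplify} yields the stated convergence. The only step that is not essentially a bookkeeping exercise is the case analysis in Step~3, where one must verify that the algebraic simplifications using~\eqref{eq:curie-weiss} produce the same closed form $-l(h,\beta)$ in each regime; this is where I would expect to spend the most care.
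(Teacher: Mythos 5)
Your proposal is correct and follows essentially the same route as the paper's proof: the identity $n^{\alpha}\E^n_\eta[Y-\eta]=2n^{1/2}(p^n_+(\eta)-p^n_-(\eta))$, an application of Lemma~\ref{lem:simplify} with $r=1/2$, $k=1$, and a case analysis on the sign of $h+m_0$ using~\eqref{eq:curie-weiss} to identify the leading coefficient as $-l(h,\beta)$, with the $O(n^{-1}\eta^2)$ remainder killed by $\delta<1/4$. The only difference is cosmetic: the paper works out the cases $m_0=0$ and $h+m_0>0$ explicitly and appeals to symmetry for the rest, exactly as you indicate.
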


In other words, the ``drift'' function of the Metropolis-Hastings transitions is given by $-2 l(h,\beta) \eta$.

\begin{proof}
We have, using $\alpha = 1$,
\[ n^{\alpha} \E^n_{\eta}[Y-\eta] = n (p^n_+(\eta) - p^n_-(\eta)) (2 n^{-1/2}).\]
Therefore, applying Lemma~\ref{lem:simplify} with $r = \half$ and $k = 1$, we may approximate $n^{\alpha} \E^n_{\eta}[Y-\eta]$ to sufficient precision by 
\[ 2 n^{1/2}(p^{n,1}_+(\eta) - p^{n,1}_-(\eta)).\]
Now distinguish the following cases.
\begin{itemize}
 \item Suppose $m_0 = 0$ and (therefore) $h+m_0 = 0$. We will show the result for $\eta \geq 0$, the case $\eta < 0$ is analogous. If $\eta \geq 0$, then
\begin{align*}
2 n^{1/2} (p^{n,1}_+(\eta) - p^{n,1}_-(\eta)) & = n^{1/2} (1 - n^{-1/2} \eta) - n^{1/2}(1 + n^{-1/2} \eta)(1 - 2 \beta n^{-1/2} \eta)  \\
& = -2 (1 - \beta) \eta + 2 \beta n^{-1/2} \eta^2.
\end{align*}
Now using that $n^{-1/2}\eta^2 \leq n^{2\delta - 1/2} \rightarrow 0$ on $F^{n,\delta}$, the result follows.
\item Suppose $h + m_0 > 0$ (the case $h + m_0 < 0$ is analogous). Then, using~\eqref{eq:curie-weiss},
\begin{align*}
& 2 n^{1/2} (p^{n,1}_+(\eta) - p^{n,1}_-(\eta)) \\
& = n^{1/2}(1 - (m_0 + n^{-1/2} \eta)) - n^{1/2}(1 + m_0 + n^{-1/2}\eta) \exp(- 2 \beta (m_0+h)) (1 - 2 \beta n^{-1/2} \eta)\\
& = n^{1/2} (1 - (m_0 + n^{-1/2}\eta)) - n^{1/2}(1 + m_0 + n^{-1/2} \eta) \left( \frac{1-m_0}{1+m_0} \right) (1 - 2 \beta n^{-1/2} \eta) \\
& = -\eta - \eta \left( \frac{1 - m_0}{1 + m_0} \right) + 2 \beta  (1 - m_0) \eta + 2 \beta \left( \frac{1 - m_0}{1 + m_0} \right) n^{-1/2} \eta^2 \\
& = -\frac {2 \eta}{1 + m_0} + 2 \beta (1 - m_0) \eta + 2 \beta \left( \frac{1 - m_0}{1 + m_0} \right) n^{-1/2} \eta^2,
\end{align*}
where again the $O(n^{-1/2}\eta^2)$-term vanishes.
\end{itemize}
\end{proof}

\begin{proof}[Proof of Theorem~\ref{thm:diffusion-limit-mh-high-temperature}]
The generator of $Y^n$ is given by 
\begin{equation} \label{eq:generator-markov-chain} G^{n, \alpha} \varphi(\eta) := n^{\alpha} (P^n \varphi(\eta ) - \varphi(\eta)), \quad \varphi : X^n \rightarrow \R.\end{equation}
Let $G$ denote the unbounded operator $G : D(G) \subset C_0(\R) \rightarrow C_0(\R)$, where
\[ D(G) = \{ \varphi \in C^2_0(\R) : \eta \mapsto \eta \varphi'(\eta)  \in C_0(\eta) \}\]
and
\[ G\varphi(\eta) = - 2 l(h, \beta)\eta \frac{d \varphi}{d \eta} + \half \sigma^2(h,\beta) \frac{d^2 \varphi}{d \eta^2}, \quad \varphi \in D(G).\]
The space of infinitely differentiable functions with compact support $C^{\infty}_c(\R)$ is strongly separating (in the sense of \cite[Section 3.4]{EthierKurtz2005}). Let $\varphi \in C^{\infty}_c(\R)$. For $\eta, \zeta \in \R$, we have
\[ \left| \varphi(\zeta) - \left\{ \varphi(\eta) + \varphi'(\eta)(\zeta - \eta) + \half \varphi''(\eta)(\zeta - \eta)^2 \right\} \right| \leq (1/6) \| \varphi^{(3)}\|_{\infty}|\zeta - \eta|^3.\]
 Since $\varphi^{(3)}$ is bounded we may approximate, for $\alpha = 1$ and $\delta = 1/8$, using Lemmas~\ref{lem:diffusion-general-temperature},~\ref{lem:higher-moments} and ~\ref{lem:drift-high-temperature},
\begin{align*}
& \sup_{\eta \in F^{n,\delta}} \left| G^{n,\alpha} \varphi(\eta) - G \varphi(\eta)\right| \\
& \leq \sup_{\eta \in F^{n,\delta}} \left| n^{\alpha} \E^n_{\eta} [ \varphi_n(Y) - \varphi_n(\eta)] - n^{\alpha} \E^n_{\eta}[ \varphi'(\eta) (Y - \eta) + \half \varphi''(\eta) (Y- \eta)^2] \right| \\
& \leq (1/6) n^{\alpha}\|\varphi^{(3)}\|_{\infty} \E_{\eta}^n [|Y- \eta|^3]\rightarrow 0.\end{align*}

Let $\P^n$ denote the distribution of the stationary Markov process $Y^n$ with invariant distribution $\mu^n$. Then, for $T > 0$, by Lemma~\ref{lem:concentration-high-temperature},
\[ \P^n(Y^n(t) \notin F^{n,\delta} \ \mbox{for some $0 \leq t \leq T$}) \leq n^{\alpha} \pi^n(\eta^n(x) \notin F^{n,\delta}) \rightarrow 0.\] 
We may now apply \cite[Corollary 4.8.7]{EthierKurtz2005} to arrive at the stated result.
\end{proof}

\subsubsection{At critical temperature}
In this section we assume the `critical' case $h = 0$ and $\beta = 1$. The correct scaling of the magnetization will be 
$\eta^n = n^{\gamma - 1} m^n$ with $\gamma = 1/4$.

\begin{lemma}
\label{lem:concentration-critical-temperature}
Suppose $h = 0$ and $\beta = 1$. Let $\gamma = 1/4$ and $\delta > 0$. Then, for any $\alpha > 0$, 
\[ \lim_{n \rightarrow \infty} n^{\alpha} \pi^n( \eta^n(x) \notin F^{n,\delta}) = 0.\]
\end{lemma}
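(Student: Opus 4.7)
The proof will be a direct application of the critical-temperature concentration bound in Proposition~\ref{prop:concentration-Curie-Weiss}(ii), essentially mirroring the structure of Lemma~\ref{lem:concentration-high-temperature} but using the sharper $e^{-cnt}$ decay available at $\beta=1$, $h=0$. The key observation is that in this regime $m_0(0,1)=0$, so $\eta^n(x) = n^{1/4} m^n(x)$, and consequently
\[
\{\eta^n(x)\notin F^{n,\delta}\} \;=\; \{|m^n(x)| > n^{\delta-1/4}\}.
\]

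The plan is to choose $t$ in Proposition~\ref{prop:concentration-Curie-Weiss}(ii) so that $t^{1/4} = n^{\delta-1/4}$, i.e.\ $t = n^{4\delta-1}$. Then the concentration bound gives
\[
\pi^n\bigl(|m^n| \geq n^{\delta-1/4}\bigr) \;\leq\; 2\exp(-c n \cdot n^{4\delta-1}) \;=\; 2\exp(-c n^{4\delta}).
\]
Multiplying by $n^\alpha$ yields $n^\alpha \pi^n(\eta^n \notin F^{n,\delta}) \leq 2 n^\alpha \exp(-c n^{4\delta})$, and since $\delta>0$ the exponential decay dominates any polynomial growth, so the bound tends to zero as $n \to \infty$.

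There is essentially no obstacle here — the only thing to be careful of is that $n^{\delta-1/4}$ may exceed $1$ for small $n$, in which case the event is empty and the bound holds trivially; otherwise $t = n^{4\delta-1} \in (0,1]$ for $n$ sufficiently large (when $4\delta \leq 1$) or for all $n$ after perhaps adjusting the constant $c$ (when $4\delta > 1$, the event $|m^n| \geq n^{\delta-1/4}$ is empty for large $n$ since $|m^n|\leq 1$). In either case the estimate goes through verbatim. The main subtlety worth flagging, compared with Lemma~\ref{lem:concentration-high-temperature}, is that the proof does not need to invoke the mean value theorem on $m - \tanh(\beta m)$ (which degenerates at $\beta=1$) — this is precisely why one needs the stronger input of Chatterjee–Dey rather than the Chatterjee bound used in the supercritical case.
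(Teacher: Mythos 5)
Your proof is correct and is essentially identical to the paper's: both apply Proposition~\ref{prop:concentration-Curie-Weiss}(ii) with $t^{1/4}=n^{\delta-\gamma}$, i.e.\ $t=n^{4\delta-1}$, to obtain the bound $2n^{\alpha}\exp(-cn^{4\delta})\rightarrow 0$. Your additional remarks on the degenerate case $n^{\delta-1/4}>1$ and on why the mean value theorem argument of the supercritical case is unavailable at $\beta=1$ are accurate but not needed for the estimate itself.
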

\begin{proof}
This follows since, by Proposition~\ref{prop:concentration-Curie-Weiss} (ii),
\[ n^{\alpha} \pi^n(\eta^n \notin F^n) = n^{\alpha}\pi^n(|m^n| > n^{\delta - \gamma}) \leq 2 n^{\alpha} \exp\left( - c n^{1 + 4(\delta - \gamma)}\right) \rightarrow 0.\]
\end{proof}

It turns out that in this case, the correct speed-up factor is $n^{\alpha}$ with $\alpha = 3/2$. In order to obtain the generator in the critical regime, we will require higher-order Taylor expansions, resulting in a non-linear drift in the diffusion limit, and accordingly, a non-Gaussian invariant distribution.

\begin{lemma}
\label{lem:drift-critical-temperature}
Suppose $\beta = 1$ and $h = 0$. Let $\delta \in (0, 1/16)$. Then 
\[ \lim_{n \rightarrow \infty} \sup_{\eta \in F^{n,\delta}} \left| n^{3/2} \E[Y^n - \eta] + 2/3 \eta^3  \right| = 0. \]
\end{lemma}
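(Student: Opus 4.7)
The plan is to imitate the strategy used for the supercritical drift in Lemma~\ref{lem:drift-high-temperature}, but push the Taylor expansion in the Metropolis-Hastings acceptance probability to a higher order so that the surviving term is cubic rather than linear in $\eta$.

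First I would write the drift as an explicit finite difference. Since $Y - \eta \in \{\pm 2 n^{\gamma-1}\}$ under $\P^n_\eta$, and $\gamma = 1/4$,
\begin{equation*}
n^{3/2}\,\E^n_\eta[Y-\eta] \;=\; 2 n^{3/2} n^{\gamma-1}\bigl(p^n_+(\eta) - p^n_-(\eta)\bigr) \;=\; 2 n^{3/4}\bigl(p^n_+(\eta) - p^n_-(\eta)\bigr).
\end{equation*}
The goal is thus to prove $\sup_{\eta\in F^{n,\delta}} |2 n^{3/4}(p^n_+(\eta)-p^n_-(\eta)) + (2/3)\eta^3| \to 0$.

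Next I would replace $p^n_\pm$ by the truncated approximations $p^{n,k}_\pm$ from Lemma~\ref{lem:simplify}. To afford multiplication by $n^{3/4}$, I need $r = 3/4 < (k+1)(\gamma - \delta) = (k+1)(1/4 - \delta)$. With $\delta < 1/16$, the choice $k = 3$ is sufficient since $4(1/4-\delta) = 1 - 4\delta > 3/4$. Hence
\begin{equation*}
\sup_{\eta \in F^{n,\delta}} n^{3/4}\bigl|p^n_\pm(\eta) - p^{n,3}_\pm(\eta)\bigr| \longrightarrow 0,
\end{equation*}
and it suffices to analyse $2 n^{3/4}(p^{n,3}_+(\eta) - p^{n,3}_-(\eta))$. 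Because $h = 0$, $m_0 = 0$ and $\beta = 1$, the definition~\eqref{eq:simplified-transitions} specializes, for $u := n^{-1/4}\eta$ and $\eta \geq 0$, to
\begin{equation*}
p^{n,3}_+(\eta) = \tfrac{1}{2}(1-u), \qquad p^{n,3}_-(\eta) = \tfrac{1}{2}(1+u)\,p_3(-2u),
\end{equation*}
with the roles of $\pm$ swapped for $\eta < 0$.

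Then I would perform the elementary polynomial expansion. For $\eta \geq 0$, using $p_3(-2u) = 1 - 2u + 2u^2 - \tfrac{4}{3}u^3$,
\begin{equation*}
2\bigl(p^{n,3}_+(\eta) - p^{n,3}_-(\eta)\bigr) = (1-u) - (1+u)p_3(-2u) = -\tfrac{2}{3}u^3 + \tfrac{4}{3}u^4,
\end{equation*}
the quadratic and lower-order terms cancelling by design. Substituting $u = n^{-1/4}\eta$ and multiplying by $n^{3/4}$ gives $-\tfrac{2}{3}\eta^3 + \tfrac{4}{3} n^{-1/4}\eta^4$. The analogous computation for $\eta < 0$ yields the same cubic with the quartic remainder $-\tfrac{4}{3}n^{-1/4}\eta^4$.

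Finally, on $F^{n,\delta}$ with $\delta < 1/16$, the remainder is bounded by $\tfrac{4}{3}n^{-1/4 + 4\delta}$, which tends to $0$ uniformly. Combining with the Lemma~\ref{lem:simplify} approximation step yields the claimed uniform convergence. The only non-routine point is checking that the choice of $k$ and $\delta$ is consistent: taking $k = 3$ precisely forces the constraint $\delta < 1/16$ that appears in the statement, which is also exactly what is needed to kill the $u^4$ remainder. I expect the main bookkeeping obstacle to be keeping track of the cancellations in the polynomial $(1+u)p_3(-2u)$ (and its counterpart for $\eta < 0$), ensuring the $u^0, u^1, u^2$ terms all vanish so that the leading nonzero contribution is exactly the cubic $-\tfrac{2}{3}u^3$.
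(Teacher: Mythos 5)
Your proposal is correct and follows essentially the same route as the paper's proof: write the drift as $2n^{3/4}(p^n_+(\eta)-p^n_-(\eta))$, invoke Lemma~\ref{lem:simplify} with $k=3$ and $r=3/4$, expand $(1-u)-(1+u)p_3(-2u)$ to find the surviving cubic $-\tfrac{2}{3}u^3$, and use $\delta<1/16$ to make the quartic remainder vanish uniformly on $F^{n,\delta}$. The algebraic cancellations and the consistency check on $k$ and $\delta$ are exactly as in the paper.
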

\begin{proof}
We have $n^{3/2} \E^n_{\eta}[Y- \eta] = n^{3/2} (p^n_+(\eta)-p^n_-(\eta)) (2n^{-3/4}) = 2 (p^n_+(\eta) - p^n_-(\eta)) n^{3/4}$. Applying Lemma~\ref{lem:simplify} with $r=3/4$ and $\gamma = 1/4$, we find that we may approximate $p^n_{\pm}$ by the 3-rd order approximation $p^{n,3}_{\pm}$. Assuming $\eta > 0$ (the other case is analogous),
\begin{align*}
&  2 (p^{n,3}_+(\eta) - p^{n,3}_-(\eta)) n^{3/4} \\
& = \left( (1 - n^{-1/4} \eta) - (1 + n^{-1/4}\eta)(1 - 2 n^{-1/4} \eta  + 2 n^{-1/2} \eta^2 - (4/3) n^{-3/4} \eta^3) \right)n^{3/4} \\
 & = \left(-(2/3) n^{-3/4} \eta^3 + (4/3)n^{-1} \eta^4 \right) n^{3/4}.
\end{align*}
On $F^{n,\delta}$ with $\delta < 1/16$, we have $\eta^4 \leq n^{4 \delta} < n^{1/4}$. It follows that the fourth order term in $\eta$ vanishes asymptotically, and the stated result follows.
\end{proof}

\begin{proof}[Proof of Theorem~\ref{thm:diffusion-limit-mh-critical-temperature}]
The proof is completely analogous to that of Theorem~\ref{thm:diffusion-limit-mh-high-temperature}, taking $\alpha = 3/2$, $\gamma = 1/4$, $\delta = 1/32$, and applying Lemma~\ref{lem:drift-critical-temperature} instead of Lemma~\ref{lem:drift-high-temperature}.\end{proof}

\subsection{Estimates for Lifted Metropolis-Hastings applied to Curie-Weiss}

Let $(Y,J) \in \R \times \{-1,1\}$ denote the random variable indicating the new state after a single jump.
Under $\P^n_{\eta,j}$, let $(Y,J)$ have distribution $T^n((\eta, j), \cdot)$, so that
\[ \P^n_{\eta,j}[\varphi(Y,J)] = \sum_{y,k} T^n((\eta, j),(y,k)) \varphi(y,k).\]

We will see that the correct speed-up factor for the LMH chain is $\alpha = (1-\gamma)$ (as opposed to $\alpha = 2(1-\gamma)$ for Metropolis-Hastings). At this scaling, the second moment of the increments vanishes for the LMH Markov chain:

\begin{lemma}[LMH second moment for Curie-Weiss]
\label{lem:higher-moments-LMH}
Let $0 < \gamma < 1$. Let $\alpha = 1-\gamma$. Suppose $h \neq 0$, $\beta \geq 0$ or $h = 0$, $0 \leq \beta \leq 1$.
Then for any $p > 1$ and $j \in \{-1, +1\}$,
\[ \lim_{n \rightarrow \infty} \sup_{\eta \in X^n}  n^{\alpha} \E_{\eta,j}^n[|Y-\eta|^p] = 0.\]
\end{lemma}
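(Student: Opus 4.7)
The plan is extremely short because the statement turns out to be an immediate structural consequence of the Lifted Metropolis-Hastings transition mechanism, rather than requiring a delicate Taylor expansion of the sort used for Lemmas~\ref{lem:diffusion-general-temperature} or~\ref{lem:higher-moments}. The key observation is that, unlike plain Metropolis-Hastings which makes a spatial move at every step (including rejections, via self-loops with positive probability that do not appear in the moment computation), a single LMH transition starting from $(\eta, j)$ can only reach states whose spatial coordinate is in $\{\eta, \eta + 2 j n^{\gamma - 1}\}$. In particular, the "replica switch" transitions $T^n((\eta, j), (\eta, -j))$ contribute nothing to $|Y - \eta|^p$, and the one remaining spatial transition contributes a displacement of fixed magnitude $2 n^{\gamma - 1}$.

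First I would simply read off from~\eqref{eq:LMH-CW-transitions} that under $\P^n_{\eta, j}$, the random variable $|Y - \eta|$ is supported on $\{0, 2 n^{\gamma-1}\}$, so
\[
\E^n_{\eta, j}[|Y - \eta|^p] = p^n_{j}(\eta) \bigl( 2 n^{\gamma - 1} \bigr)^p \leq \bigl( 2 n^{\gamma - 1} \bigr)^p = 2^p n^{-p(1-\gamma)},
\]
using only that $p^n_{\pm}(\eta) \in [0,1]$ and not their explicit form. Multiplying by $n^\alpha$ with $\alpha = 1 - \gamma$ gives
\[
n^\alpha \E^n_{\eta, j}[|Y - \eta|^p] \leq 2^p n^{(1 - \gamma)(1 - p)},
\]
and since $p > 1$ and $\gamma \in (0,1)$ the exponent $(1-\gamma)(1-p)$ is strictly negative, so the right-hand side converges to $0$. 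Because the bound is independent of $\eta$, the convergence is uniform over $X^n$ and over $j \in \{-1, +1\}$.

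There is essentially no obstacle: the hypotheses on $(h, \beta)$ and the exact values of $p^n_{\pm}(\eta)$ play no role; only the combinatorial structure of the lifted chain and the choice of the speed exponent $\alpha = 1 - \gamma$ are used. This is in marked contrast with Lemma~\ref{lem:higher-moments}, where the scaling $\alpha = 2(1 - \gamma)$ forced one to keep careful track of the probability $p^n_+(\eta) + p^n_-(\eta)$; here the weaker time scaling $\alpha = 1 - \gamma$ gives a trivial bound. This lemma is precisely what will later ensure that in the generator computation for LMH (Theorems~\ref{thm:LMH-supercritical-temperature} and \ref{thm:LMH-critical-temperature}) the second-order term coming from a Taylor expansion of $\varphi$ is negligible, so that the limit generator is first-order (transport) rather than second-order (diffusion).
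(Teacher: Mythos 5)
Your proposal is correct and is essentially the paper's own proof: both compute $\E^n_{\eta,j}[|Y-\eta|^p] = p^n_{\pm}(\eta)(2n^{\gamma-1})^p$, bound $p^n_{\pm}(\eta)$ by $1$, and note that with $\alpha = 1-\gamma$ the resulting exponent $(p-1)(\gamma-1)$ is strictly negative, giving uniform convergence over $X^n$. The surrounding commentary (contrast with the MH scaling $\alpha = 2(1-\gamma)$ and the role of this bound in killing the diffusive term in the limit generator) is accurate but not needed for the proof itself.
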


\begin{proof}
We compute
\begin{align*}
n^{\alpha} \E^n_{y,j=\pm}[|Y-\eta|^p] = n^{\alpha} p^n_{\pm}(\eta) (2 n^{\gamma -1})^p = 2^p p^n_{\pm}(\eta) n^{(p-1)(\gamma - 1)}
\end{align*}
Since $|p^n_{\pm}(\eta)| \leq 1$, the supremum over $\eta \in X^n$ converges to zero.
\end{proof}

Asymptotically, the first moment of the increments does not depend on $\eta$. Let $a(h,\beta)$ be given by~\eqref{eq:LMH-drift}.

\begin{lemma}[LMH drift for Curie-Weiss]
\label{lem:LMH-drift}
Let $0 < \delta < \gamma < 1$. Let $\alpha = 1 - \gamma$.  Suppose $h \neq 0$, $\beta \geq 0$ or $h = 0$, $0 \leq \beta \leq 1$.
Then for $j \in \{-1, +1\}$,
\[ \lim_{n \rightarrow \infty} \sup_{\eta \in X^n} \left| n^{\alpha} \E_{\eta,j}^n[Y-\eta] - a(h, \beta)  j \right|= 0.\]
\end{lemma}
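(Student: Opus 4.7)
My plan is to obtain the stated drift limit by directly computing the one-step conditional first moment from the transition structure~\eqref{eq:LMH-CW-transitions}, and then identifying the limit by the same case analysis used in the proof of Lemma~\ref{lem:diffusion-general-temperature}. Starting from $(\eta,+1)$, the only event that contributes to $\E^n_{\eta,+1}[Y-\eta]$ is the in-replica move to $(\eta+2n^{\gamma-1},+1)$, which has probability $p^n_+(\eta)$; both the replica switch to $(\eta,-1)$ and the holding event leave the $Y$-coordinate fixed. Hence
\[
n^\alpha\,\E^n_{\eta,+1}[Y-\eta]=n^{1-\gamma}\cdot 2n^{\gamma-1}\,p^n_+(\eta)=2\,p^n_+(\eta),
\]
and symmetrically $n^\alpha\,\E^n_{\eta,-1}[Y-\eta]=-2\,p^n_-(\eta)$. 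The task therefore reduces to showing that $\sup_{\eta\in X^n}\lvert 2p^n_\pm(\eta)-a(h,\beta)\rvert\to 0$.

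The next step is to apply Lemma~\ref{lem:simplify} with $r=0$ and $k=0$ so that $p^n_\pm$ can be replaced, with vanishing error, by the zeroth-order approximation $p^{n,0}_\pm$ from~\eqref{eq:simplified-transitions}. Evaluating the limit of $2p^{n,0}_\pm$ is a short computation that mirrors the proof of Lemma~\ref{lem:diffusion-general-temperature}: one substitutes the Curie--Weiss identity $\exp(2\beta(m_0+h))=(1+m_0)/(1-m_0)$ from~\eqref{eq:curie-weiss} and splits on the sign of $m_0+h$. In each of the two cases the elementary algebra collapses to $1-|m_0|=a(h,\beta)$. Coupling this with the $\pm$-sign attached to the displacement then produces the target $a(h,\beta)j$.

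The step I expect to be the main obstacle is the uniform control over the entire discrete space $X^n$, rather than over the concentration set $F^{n,\delta}$ on which Lemma~\ref{lem:simplify} is phrased. To push the bound all the way to $X^n$, I would work with the closed-form rewrite
\[
2p^n_+(\eta)=(1-m^n)\bigl(1\wedge\exp(2\beta(m^n+h)+2\beta n^{-1})\bigr),\qquad m^n=m_0+n^{-\gamma}\eta\in[-1,1],
\]
and exploit that the $2\beta n^{-1}$ correction inside the exponential contributes a uniform $O(n^{-1})$ error by the $1$-Lipschitz property of $x\mapsto 1\wedge e^x$. With the $n^{-1}$ correction removed, $2p^n_+(\eta)$ becomes a function of $m^n$ alone, and I would attempt to express the discrepancy $2p^n_+(\eta)-a(h,\beta)$ as a product involving an explicit vanishing factor extracted from the Metropolis ratio together with~\eqref{eq:curie-weiss}. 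If such an algebraic cancellation can be pushed through, the uniform bound follows from the boundedness of $m^n$; if it cannot, the natural fallback is to invoke the concentration estimates of Lemmas~\ref{lem:concentration-high-temperature} and~\ref{lem:concentration-critical-temperature} so that the estimate is effectively used only where $m^n-m_0$ is small. In either case the technical heart of the proof lies in transferring the Taylor-based control from $F^{n,\delta}$ to the full state space $X^n$, which is the step distinguishing this lemma from its Metropolis--Hastings counterpart Lemma~\ref{lem:drift-high-temperature}.
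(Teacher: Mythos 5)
Your first two steps reproduce the paper's proof exactly: compute $n^{\alpha}\E^n_{\eta,\pm1}[Y-\eta]=\pm 2p^n_{\pm}(\eta)$, invoke Lemma~\ref{lem:simplify} with $r=0$ and $k=0$ to pass to $p^{n,0}_{\pm}$, then use $\exp(2\beta(m_0+h))=(1+m_0)/(1-m_0)$ from~\eqref{eq:curie-weiss} and a case split on the sign of $m_0+h$ to land on $\pm(1\mp m_0)\min\bigl(1,\bigl(\tfrac{1+m_0}{1-m_0}\bigr)^{\pm1}\bigr)=\pm(1-|m_0|)$. Where you depart from the paper is the final paragraph, and there you are chasing something that cannot be done: the conclusion with $\sup_{\eta\in X^n}$ is simply false. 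At the boundary point where $m^n=m_0+n^{-\gamma}\eta=1$ (all spins up, which lies in $X^n$ for every $n$), the prefactor $\frac12(1-m_0-n^{-\gamma}\eta)$ in~\eqref{eq:MH-transitions} forces $p^n_+(\eta)=0$, so $|n^{\alpha}\E^n_{\eta,+1}[Y-\eta]-a(h,\beta)|=1-|m_0|>0$ for all $n$; no algebraic cancellation can repair this, and the concentration lemmas bound probabilities under $\pi^n$, not a deterministic supremum over the state space. The resolution is that $X^n$ in the statement is a slip for $F^{n,\delta}$: this is why the hypothesis introduces $\delta$ at all, it is the set on which the paper's own proof operates (it explicitly uses $n^{-\gamma}|\eta|\le n^{\delta-\gamma}\to 0$), and it is the only form in which the lemma is applied in the proof of Theorem~\ref{thm:LMH-supercritical-temperature}, where escape from $F^{n,\delta}$ is controlled separately by Lemma~\ref{lem:concentration-high-temperature}. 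So keep your computation, drop the attempted extension to $X^n$, and state the supremum over $F^{n,\delta}$.
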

\begin{proof}
We compute $n^{\alpha} \E_{\eta,\pm 1}^n[Y- \eta] = \pm p_{\pm}^n(\eta) 2n^{\gamma - 1} n^{\alpha} = \pm 2 p_{\pm}^n(\eta)$.
We may apply Lemma~\ref{lem:simplify} with $r = 0$ and $k = 0$, to replace $p_{\pm}(\eta)$ by $p^{n,0}_{\pm}(\eta)$, given by~\eqref{eq:simplified-transitions}.
Since $n^{-\gamma} |\eta| \leq n^{\delta - \gamma} \rightarrow 0$ in the supremum over $F^{n,\delta}$, we find that as $n \rightarrow \infty$, using~\eqref{eq:curie-weiss}, 
\[ \pm 2 p_{\pm}^{n,0}(\eta) \rightarrow \pm (1 \mp m_0) \min(1, \exp(\pm 2 \beta(m_0+h))) = \pm (1 \mp m_0)\min \left(1, \left(\frac{1+m_0}{1-m_0}\right)^{\pm 1} \right).\]
By distinguishing cases, this can be seen to equal~\eqref{eq:LMH-drift}.
\end{proof}

It only remains to determine the switching rates between the replicas. This will depend on whether $0 \leq \beta < 1$ or $\beta = 1$.

\subsubsection{Supercritical temperature regime}
As we have seen, for $0 \leq \beta < 1$ the correct scaling is given by $\gamma = \half$.
In this case, we have the following asymptotic result for the switching rate between replicas.

\begin{lemma}
\label{lem:LMH-switching-rate-high-temperature}
Let $0 \leq \beta < 1$ and $h \in \R$. Suppose $\gamma = \half$, $\alpha = 1- \gamma = \half$ and $\delta \in (0, 1/4)$. Then for $j = \pm 1$,
\[ \lim_{n \rightarrow \infty} \sup_{\eta \in F^{n,\delta}} \left|n^{\alpha} \P^n_{\eta,j}(J = -j) - \max(0, j l(h,\beta) \eta)  \right| = 0,\]
with $l(h,\beta)$ given by~\eqref{eq:drift-high-temperature}.
\end{lemma}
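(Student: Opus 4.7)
The plan is to recognize the switching probability as the positive part of the drift imbalance and then to appeal to the uniform estimate already obtained in Lemma~\ref{lem:drift-high-temperature}.

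First, I would unpack the definition~\eqref{eq:LMH-CW-transitions} of $T^n$: by construction,
\[
\P^n_{\eta,j}(J = -j) \;=\; \max\!\bigl(0,\, p^n_{-j}(\eta) - p^n_{j}(\eta)\bigr)
\;=\; \max\!\bigl(0,\, -j\bigl(p^n_{+}(\eta) - p^n_{-}(\eta)\bigr)\bigr),
\]
valid for both signs $j = \pm 1$. Multiplying through by $n^{\alpha} = n^{1/2}$ and using the fact that $x \mapsto \max(0,x)$ is $1$-Lipschitz, the proof reduces to controlling
\[
\sup_{\eta \in F^{n,\delta}} \Bigl| n^{1/2}\bigl(p^n_{+}(\eta) - p^n_{-}(\eta)\bigr) + l(h,\beta)\,\eta \Bigr|.
\]

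Next I would connect this quantity to Lemma~\ref{lem:drift-high-temperature}. That lemma, applied with $\alpha = 1$ and $\gamma = 1/2$, yields
\[
n^{1} \E^n_{\eta}[Y-\eta] \;=\; n\bigl(p^n_{+}(\eta) - p^n_{-}(\eta)\bigr)\bigl(2 n^{-1/2}\bigr) \;=\; 2\, n^{1/2}\bigl(p^n_{+}(\eta) - p^n_{-}(\eta)\bigr),
\]
and Lemma~\ref{lem:drift-high-temperature} states precisely that this tends uniformly to $-2 l(h,\beta)\,\eta$ on $F^{n,\delta}$. Consequently $n^{1/2}(p^n_{+}(\eta) - p^n_{-}(\eta)) \to -l(h,\beta)\,\eta$ uniformly on $F^{n,\delta}$, and the $1$-Lipschitz property of $\max(0,\cdot)$ transfers this convergence to
\[
\sup_{\eta \in F^{n,\delta}}\Bigl| n^{1/2}\max\!\bigl(0,\,-j(p^n_{+}(\eta) - p^n_{-}(\eta))\bigr) - \max\!\bigl(0,\, j\, l(h,\beta)\,\eta\bigr) \Bigr| \longrightarrow 0,
\]
which is exactly the claim. (Note that the hypothesis $\delta \in (0,1/4)$ matches the range required in Lemma~\ref{lem:drift-high-temperature}, so there is no extra work to verify.)

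There is no substantial obstacle here: once one observes that the inter-replica transition rate is, by construction, the positive part of the difference $p^n_{-j}-p^n_{j}$, the lemma reduces to a single application of the already-established drift estimate together with continuity of $\max(0,\cdot)$. The only point worth being careful about is matching signs between the $j = +1$ and $j = -1$ cases, which is handled by the factor $-j$ in the rewriting above.
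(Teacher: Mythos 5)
Your proof is correct. The identity $\P^n_{\eta,j}(J=-j)=\max(0,-j(p^n_+(\eta)-p^n_-(\eta)))$ follows directly from~\eqref{eq:LMH-CW-transitions}, the sign bookkeeping for $j=\pm1$ checks out, and since Lemma~\ref{lem:drift-high-temperature} (whose hypotheses, including $\delta\in(0,1/4)$, are met here) gives $2n^{1/2}(p^n_+(\eta)-p^n_-(\eta))\to-2l(h,\beta)\eta$ uniformly on $F^{n,\delta}$, the $1$-Lipschitz continuity of $x\mapsto\max(0,x)$ finishes the argument. Your route differs from the paper's: the paper re-derives the asymptotics of $p^n_-(\eta)-p^n_+(\eta)$ from scratch, invoking Lemma~\ref{lem:simplify} with $r=\half$, $k=1$ and then working through the case distinctions ($h=0$ versus $h\neq0$, using~\eqref{eq:curie-weiss}) a second time, before applying the same Lipschitz step. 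You instead notice that the quantity to be controlled is, up to a factor of $2$ and a sign, exactly the drift already estimated in Lemma~\ref{lem:drift-high-temperature}, so the entire Taylor-expansion computation can be reused rather than repeated. Your version is more economical and makes the structural link between the MH drift and the LMH switching rate explicit (the switching rate is the positive part of minus the drift, divided by twice the speed); the paper's version is self-contained at the cost of duplicating the computation. The same reduction would also shorten the critical-temperature analogue, where the paper does in fact point back to the computation in Lemma~\ref{lem:drift-critical-temperature}.
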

\begin{proof}
We have
\[ n^{\alpha} \P^n_{\eta,j=\pm 1}(J = -j) = n^{1/2} \max(0, p^n_{\mp}(\eta) - p^n_{\pm}(\eta)).\]
Applying Lemma~\ref{lem:simplify} with $r = \half$ and $k = 1$, we find that $p^n_{\pm}(\eta)$ may be approximated to sufficient accuracy by $p^{n,1}_{\pm}(\eta)$, given by~\eqref{eq:simplified-transitions}. We distinguish cases.
\begin{itemize}
 \item Suppose $h = 0$ (and hence $m_0 = 0$) and $j = +1$. Then
 \begin{align*}
&  p_{-}^{n,1}(\eta) - p_{+}^{n,1}(\eta) \\
& = \half \left\{(1 + n^{-1/2} \eta) \min(1, 1 - 2 \beta n^{-1/2} \eta) - (1 - n^{-1/2} \eta) \min(1, 1+2 \beta n^{-1/2} \eta) \right\} \\
 & = (1- \beta) n^{-1/2} \eta - \sign(\eta) \beta n^{-1} \eta^2.
 \end{align*}
Using Lipschitz continuity of $x \mapsto \max(0,x)$ and $\delta < 1/4$,
\[ |n^{1/2}\max(0, p^{n,1}_-(\eta) - p^{n,1}_+(\eta)) - \max(0, (1- \beta)  \eta)| \leq \beta n^{-1/2} \eta^2 \leq \beta n^{-1/2 + 2 \delta} \rightarrow 0\]
in the supremum over $\eta$, as $n \rightarrow \infty$. The case $j = -1$ is analogous.
\item Suppose $h \neq 0$. Let us say without loss of generality $h > 0$ and hence $m_0 > 0$. Taking $j = +1$, we compute using~\eqref{eq:curie-weiss},
\begin{align*}
&  p_-^{n,1}(\eta) - p_+^{n,1}(\eta) \\
& = \half(1 + m_0 + n^{-1/2}\eta)\exp(-2 \beta(m_0+h))(1 - 2 \beta n^{-1/2} \eta) - \half(1 - m_0 -n^{-1/2} \eta) \\
 & = \half(1 + m_0 + n^{-1/2}\eta)\left( \frac{1 - m_0}{1 + m_0} \right) (1 - 2 \beta n^{-1/2} \eta) - \half(1 - m_0 -n^{-1/2} \eta) \\
 & = \half \left\{n^{-1/2} \eta \left( \frac{1-m_0}{1 + m_0}\right) - 2 \beta n^{-1/2} \eta (1-m_0) + n^{-1/2}\eta \right\} + O(\eta^2 n^{-1}) \\
 & = \left(\frac{1}{1 + m_0} - \beta(1-m_0)\right)\eta.
\end{align*}
The other cases follow by analogous computations, or by exploiting the symmetry transformations $(\eta,j) \leftrightarrow (-\eta,-j)$ and $(h,m_0)\leftrightarrow(-h,-m_0)$.
\end{itemize}
\end{proof}

\begin{proof}[Proof of Theorem~\ref{thm:LMH-supercritical-temperature}]
The generator of $(Y^n, J^n)$ is given by 
\begin{equation}
 \label{eq:generator-LMH} L^{n,\alpha} \varphi(\eta,j) = n^{\alpha} (T^n \varphi(\eta, j) - \varphi(\eta, j)), \quad \varphi : X^n \times \{-1,+1\} \rightarrow \R.
\end{equation}
It is established in Proposition~\ref{prop:construction} that $L$ given by~\eqref{eq:generator-LMH}  generates a Markov process in $\R \times \{-1,1\}$. By Proposition~\ref{prop:feller} the Markov process corresponds to a Feller semigroup $(P(t))$ on $C_0(E)$.
Note 
\begin{align*} L^{n,\alpha} \varphi(\eta,j) & = n^{\alpha} \E_{\eta,j}^n [ \varphi(Y,J) - \varphi(y,j) ] \\
 & = n^{\alpha} \E_{\eta,j}^n [ (\varphi(Y,j) -\varphi(y,j))\1_{\{J = j\}}] +  (\varphi(y,-j) - \varphi(y,j))\P_{\eta,j}^n (J = -j)
\end{align*}
Consider the set of functions $M = \{ \varphi : \R \times \{-1,1\} \rightarrow \R, \mbox{$\varphi(\cdot,j) \in C^{\infty}_c(\R)$ for $j = \pm 1$}\}$. Then $M$ is strongly separating. Using an analogous Taylor approximation argument as in the proof of Theorem~\ref{thm:diffusion-limit-mh-high-temperature}, for $j = \pm 1$,
\begin{align*}
& \sup_{\eta \in F^{n,\delta}} \left|L^{n,\alpha} \varphi(\eta,j) - L \varphi(\eta,j)\right| \\
& \leq \sup_{\eta \in F^{n,\delta}} \left|L^{n,\alpha} \varphi(\eta, j) - \frac{\partial}{\partial \eta} \varphi(\eta,j) \E_{\eta,j}^n [Y - \eta] -n^{\alpha} \P_{\eta,j}^n(J = - j) (\varphi(\eta, -j) - \varphi(\eta, j)) \right| \\
& \quad + \sup_{\eta \in F^{n,\delta}} \left|\frac{\partial}{\partial \eta} \varphi(\eta,j) \E_{\eta,j}^n [Y - \eta]+ n^{\alpha} \P_{\eta,j}^n(J = - j) (\varphi(\eta, -j) - \varphi(\eta, j)) - L \varphi(\eta, j) \right| \\
& \leq \half n^{\alpha} \sup_{\eta \in F^{n, \delta}}\left\|\frac{\partial^2 \varphi}{\partial \eta^2}\right\|_{\infty}  \E_{\eta,j}^n[(Y - \eta)^2] + 2 \sup_{\eta \in F^{n,\delta}} |n^{\alpha} \P^n_{\eta,j}(J=-j) - \max(0, jl(h,\beta) \eta)| \| \varphi \|_{\infty}\\
& \quad + \sup_{\eta \in F^{n,\delta}} \left\| \frac{\partial \varphi}{\partial \eta} \right\|_{\infty} \left|  n^{\alpha} \E_{\eta, j}^n [Y - \eta] - a(h,\beta) j \right|
\end{align*}
which converges to zero by applying Lemmas~\ref{lem:higher-moments-LMH},~\ref{lem:LMH-drift} and~\ref{lem:LMH-switching-rate-high-temperature}, taking $\alpha = 1/2$, $\gamma = 1/2$ and $\delta  = 1/8$. As in the proof of Theorem~\ref{thm:diffusion-limit-mh-high-temperature}, using Lemma~\ref{lem:concentration-high-temperature} $(Y^n,J^n)$ are increasingly concentrated on $F^{n,\delta}$ for $\delta = 1/8$. We may now apply \cite[Corollary 4.8.7]{EthierKurtz2005} to deduce the stated weak convergence.
It is established in Proposition~\ref{prop:invariant-measure} that $(Y,J)$ has the stated stationary distribution.
\end{proof}

\subsubsection{At critical temperature}
As above for $h=0$ and $\beta = 1$ we consider the scaled magnetization $\eta^n = n^{\gamma} m^n$ with $\gamma = 1/4$ .

\begin{lemma}
\label{lem:LMH-switching-rate-critical-temperature}
Let $\beta = 1$, $h = 0$, $\gamma = 1/4$, $\alpha = 1-\gamma = 3/4$, and $\delta \in (0, 1/16)$.
Then 
\[ \lim_{n \rightarrow \infty} \sup_{\eta \in F^{n,\delta}} \left|n^{\alpha} \P^n_{\eta,j}(J = -j) - \max(0, 1/3 j \eta^3) \right| = 0.\]
\end{lemma}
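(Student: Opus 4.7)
The plan is to follow the template of Lemma~\ref{lem:LMH-switching-rate-high-temperature}, but push the Taylor expansion one order further. At the critical temperature the difference $p^n_-(\eta) - p^n_+(\eta)$ loses its linear-in-$\eta$ contribution: the factor $(1-\beta)$ that produced the asymptotic drift in the supercritical proof vanishes because $\beta = 1$, and the mass prefactors $\half(1 \mp n^{-\gamma}\eta)$ are symmetric since $m_0 = 0$. Cancellations therefore kill the constant, linear, and quadratic orders in $n^{-1/4}\eta$, and the first surviving contribution is cubic, which is precisely the reason the correct speed is $n^{3/4}$ rather than $n^{1/2}$.

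Concretely, I would write $n^{\alpha}\P^n_{\eta,j = \pm 1}(J = -j) = n^{3/4}\max(0,\,p^n_{\mp}(\eta) - p^n_{\pm}(\eta))$ and first invoke Lemma~\ref{lem:simplify} with $r = 3/4$ and $k = 3$: the admissibility condition $r < (k+1)(\gamma - \delta) = 1 - 4\delta$ holds precisely when $\delta < 1/16$, which is exactly the hypothesis of the lemma. This replaces $p^n_{\pm}(\eta)$ by the explicit polynomial $p^{n,3}_{\pm}(\eta)$ from~\eqref{eq:simplified-transitions} (in the $h + m_0 = 0$ branch) with uniform error $o(n^{-3/4})$ on $F^{n,\delta}$.

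Next I would compute $p^{n,3}_-(\eta) - p^{n,3}_+(\eta)$ by direct expansion. For $\eta \geq 0$, the definition gives $p^{n,3}_+(\eta) = \half(1 - n^{-1/4}\eta)$ and $p^{n,3}_-(\eta) = \half(1 + n^{-1/4}\eta)\,p_3(-2 n^{-1/4}\eta)$. Multiplying out and collecting, the terms of orders $1$, $n^{-1/4}\eta$ and $n^{-1/2}\eta^2$ cancel, leaving
\[ p^{n,3}_-(\eta) - p^{n,3}_+(\eta) = \frac{1}{3} n^{-3/4}\eta^3 - \frac{2}{3} n^{-1}\eta^4. \]
An analogous computation for $\eta \leq 0$ (with the polynomial factor now sitting on $p^{n,3}_+$) produces $\frac{1}{3}n^{-3/4}\eta^3 + \frac{2}{3}n^{-1}\eta^4$. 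Multiplying by $n^{3/4}$ and using $\eta^4 \leq n^{4\delta}$ on $F^{n,\delta}$, the quartic residual is bounded uniformly by $O(n^{4\delta - 1/4}) \to 0$ since $\delta < 1/16$.

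Finally, I would use the $1$-Lipschitz continuity of $x \mapsto \max(0, x)$ to transfer this uniform asymptotic to the quantity inside the max. For $j = +1$ with $\eta > 0$ the cubic is positive and matches $\max(0, (1/3)\eta^3)$; for $j = +1$ with $\eta < 0$ it is negative and the max collapses to $0$, which also matches $\max(0, (1/3)\eta^3) = 0$. The two cases $j = -1$ follow either by repeating the expansion for $p^n_+(\eta) - p^n_-(\eta)$ or by invoking the reflection symmetry $(\eta, j) \leftrightarrow (-\eta, -j)$ that the transition probabilities inherit from $h = m_0 = 0$. The main obstacle, if any, is purely bookkeeping: one must track the cancellations at critical order with enough care to identify $k = 3$ as the minimal order retaining the $(1/3)\eta^3$ coefficient while the quartic remainder still collapses under the hypothesis $\delta < 1/16$.
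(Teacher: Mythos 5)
Your proposal is correct and follows essentially the same route as the paper: invoke Lemma~\ref{lem:simplify} with $r=3/4$, $k=3$ (whose admissibility condition $3/4 < 1-4\delta$ is exactly $\delta < 1/16$), compute $p^{n,3}_-(\eta)-p^{n,3}_+(\eta)$ to find the surviving cubic term $(1/3)n^{-3/4}\eta^3$ with a quartic remainder that vanishes uniformly on $F^{n,\delta}$, and pass through $\max(0,\cdot)$ by Lipschitz continuity. The paper merely abbreviates the algebra by citing the identical expansion already carried out in Lemma~\ref{lem:drift-critical-temperature}, which you reproduce correctly.
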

\begin{proof}
As before
\[ n^{\alpha} \P^n_{\eta,j}(J = - j) = n^{3/4} \max(0, p^n_{\mp}(\eta) - p^n_{\pm}(\eta)).\]
Applying Lemma~\ref{lem:simplify} with $r = 3/4$, $\gamma = 1/4$ and $k = 3$, we find that a sufficiently precise approximation is $p^n_{\pm}(\eta) \approx p^{n,3}_{\pm}(\eta)$. 
The computation of $p^n_{\mp}(\eta) - p^n_{\pm}(\eta)$ has already been performed in Lemma~\ref{lem:drift-critical-temperature}, resulting in the stated expression.
\end{proof}

\begin{proof}[Proof of Theorem~\ref{thm:LMH-critical-temperature}]
The proof is fully analogous to the proof of Theorem~\ref{thm:LMH-supercritical-temperature}, now taking $\delta = 1/32$, $\gamma = 1/4$ and $\alpha = 3/4$, and applying Lemmas~\ref{lem:concentration-critical-temperature} and~\ref{lem:LMH-switching-rate-critical-temperature} instead of Lemmas~\ref{lem:concentration-high-temperature} and~\ref{lem:LMH-switching-rate-high-temperature}.
\end{proof}

\subsection{The limiting zig-zag process}
\label{sec:proof-zigzag}
By rescaling the time variable and $\lambda^{\pm}$ if necessary, we may assume $a = 1$ without loss of generality throughout the proofs below .

\subsubsection{Construction}
\begin{lemma}
\label{lem:finite-switching-times}
Suppose Assumption~\ref{ass:construction} holds. Then for every $(y,j) \in E$, $\lim_{t \rightarrow \infty} F(t;y,j) = 0$. In particular, for every $(y,j) \in E$, $1 - F(\cdot; y,j)$ is the distribution function of a positive random variable that is almost surely finite.
\end{lemma}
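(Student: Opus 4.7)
The plan is to show that the exponent $\int_0^t \lambda(y+ajs,j)\,ds$ in the definition of $F$ diverges as $t\to\infty$, which forces $F(t;y,j)\to 0$. Taking $a=1$ without loss of generality, the deterministic trajectory $s\mapsto y+js$ travels to $+\infty$ when $j=+1$ and to $-\infty$ when $j=-1$. In either case, the quantity $j(y+js)=jy+s$ grows linearly in $s$ and, in particular, exceeds the threshold $y_0$ for all sufficiently large $s$.

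First, fix $(y,j)\in E$ and set $s_0 := \max(0,\,y_0-jy)$. For every $s\ge s_0$ one has $j(y+js)=jy+s\ge y_0$, so Assumption~\ref{ass:construction} yields $\lambda(y+js,j)\ge \lambda_{\min}$. Therefore
\[
\int_0^t \lambda(y+js,j)\,ds \;\ge\; \lambda_{\min}\,(t-s_0)_+ \;\longrightarrow\; \infty
\]
as $t\to\infty$, and hence $F(t;y,j)=\exp\bigl(-\int_0^t\lambda(y+js,j)\,ds\bigr)\to 0$.

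For the second claim, observe that $t\mapsto F(t;y,j)$ is continuous (since $\lambda$ is continuous, the integrand is locally bounded and the integral is continuous in $t$), non-increasing in $t$, with $F(0;y,j)=1$ and $F(t;y,j)>0$ for every finite $t\ge 0$ (already noted in the text, using continuity of $\lambda$). Combined with $\lim_{t\to\infty}F(t;y,j)=0$, this means $t\mapsto 1-F(t;y,j)$ is a continuous distribution function on $[0,\infty)$ with $1-F(0;y,j)=0$ and $\lim_{t\to\infty}(1-F(t;y,j))=1$, so it is the distribution function of a strictly positive, almost surely finite random variable.

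There is no real obstacle here; the only thing to notice is the case split on the sign of $j$, which is uniformly handled by the quantity $jy+s$ used above. The rest is routine measure-theoretic bookkeeping on survival functions.
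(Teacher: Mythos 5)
Your proof is correct and follows essentially the same route as the paper: both arguments identify a time after which the trajectory satisfies $j(y+ajs) \geq y_0$, invoke Assumption~\ref{ass:construction} to bound the rate below by $\lambda_{\min}$, and conclude that $F(t;y,j)$ decays at least exponentially, hence tends to zero. Your treatment is marginally more explicit (the formula $s_0 = \max(0, y_0 - jy)$ and the bookkeeping for the distribution-function claim), but there is no substantive difference.
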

\begin{proof}
We fix $(y,j) \in E$. Suppose $T$ is distributed according to $1-F$. Since $F$ is continuous at 0, $\P(T = 0) = 0$. By Assumption~\ref{ass:construction}, there exist $t_0$ and $\lambda_{\min}$ such that $\lambda(y + j s, j) \geq \lambda_{\min}$ for $s \geq t_0$.
Then for $t \geq t_0$, 
\[ F(t; y,j) = F(t_0;y,j) \exp \left(-\int_{t_0}^t \lambda (y + j s, j) \ d s \right) \leq F(t_0;y,j) \exp\left( - (t-t_0) \lambda_{\min}\right),\]
and the stated result follows.
\end{proof}

\begin{lemma}
\label{lem:finitely-many-switches}
Suppose Assumption~\ref{ass:construction} holds. Then for every $t \geq 0$ and $(y,j) \in E$, $\P_{y,j}(N(t) < \infty) = 1$.
\end{lemma}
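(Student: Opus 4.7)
My plan is to show that the event $\{T_\infty \leq t\}$ has probability zero for every $t \geq 0$, which is equivalent to the claim. The key geometric observation (already implicit in the construction with $a = 1$) is that $Y$ moves at unit speed, so $|Y(s) - y| \leq s$ for every $s < T_\infty$. Consequently, on the event $\{T_\infty \leq t\}$, the process $Y$ stays inside the compact interval $[-M, M]$ with $M := |y| + t$ throughout its lifetime. Since $\lambda$ is assumed continuous, it is bounded on this compact set; let $\lambda_{\max} := \sup\{\lambda(z,k) : |z| \leq M,\ k = \pm 1\} < \infty$.

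Next, I would realize the inter-switch times via the standard inverse-compensator construction driven by an i.i.d.\ sequence $(E_i)_{i \geq 1}$ of $\mathrm{Exp}(1)$ variables (with $E_i$ independent of $\mathcal F_{T_{i-1}}$): set
\[ Z_i = \inf\left\{ s \geq 0 : \int_0^s \lambda\bigl(Y(T_{i-1}) + J(T_{i-1})u,\ J(T_{i-1})\bigr)\, d u \geq E_i \right\}. \]
Lemma~\ref{lem:finite-switching-times} guarantees the integrand's cumulative integral diverges, so the infimum is attained and the integral equals $E_i$ at $s = Z_i$. Crucially, the $E_i$ are i.i.d.\ and in particular the comparison variables $\widetilde Z_i := E_i/\lambda_{\max}$ are i.i.d.\ $\mathrm{Exp}(\lambda_{\max})$, so $\sum_{i \geq 1} \widetilde Z_i = \infty$ almost surely by the strong law (or Borel--Cantelli).

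Now I would combine the two ingredients. On $\{T_\infty \leq t\}$, for each $i$ the whole free-flight segment $\{Y(T_{i-1}) + J(T_{i-1})u : 0 \leq u \leq Z_i\}$ lies inside $[-M, M]$, so $\lambda \leq \lambda_{\max}$ along it, giving
\[ E_i = \int_0^{Z_i} \lambda\bigl(Y(T_{i-1}) + J(T_{i-1})u,\ J(T_{i-1})\bigr)\, d u \;\leq\; \lambda_{\max}\, Z_i, \]
and hence $Z_i \geq \widetilde Z_i$ for every $i$. Summing yields $T_\infty = \sum_i Z_i \geq \sum_i \widetilde Z_i = \infty$ on $\{T_\infty \leq t\}$, which is incompatible with $T_\infty \leq t$. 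Therefore $\P_{y,j}(T_\infty \leq t) = 0$, i.e.\ $\P_{y,j}(N(t) < \infty) = 1$.

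The only potentially delicate point is the measurability of the comparison $Z_i \geq \widetilde Z_i$ on the event $\{T_\infty \leq t\}$, which is a random event depending on all the $Z_j$; but this is handled cleanly by working on the almost-sure set where the explicit construction above produces the $(Z_i)$ from the $(E_i)$, so the pathwise inequality on $\{T_\infty \leq t\}$ is valid on this set and the contradiction argument goes through without any need for conditioning.
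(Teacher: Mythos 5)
Your argument is correct. The underlying idea is the same as the paper's: up to time $t$ the trajectory is confined to $[y-t,\,y+t]$, so the switching rate along the path is bounded by some $\lambda_{\max}$, and the switching times must therefore occur no faster than those of a homogeneous Poisson process of rate $\lambda_{\max}$, which has almost surely finitely many points in $[0,t]$. Where you differ is in how this comparison is implemented. The paper proves, by induction on $k$ and an integration by parts, the distributional bound $\P(T_k \leq s) \leq 1 - e^{-\lambda_{\max}s}\sum_{j=0}^{k-1}(\lambda_{\max}s)^j/j!$ (i.e.\ $T_k$ stochastically dominates an Erlang variable), and then concludes via Fatou's lemma. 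You instead realize the inter-switch times through the inverse-compensator construction $Z_i = \inf\{s : \int_0^s \lambda \geq E_i\}$ with i.i.d.\ $\mathrm{Exp}(1)$ clocks, obtaining the \emph{pathwise} domination $Z_i \geq E_i/\lambda_{\max}$ on $\{T_\infty \leq t\}$ and deriving a contradiction from $\sum_i E_i = \infty$ a.s. Your coupling route is more elementary in that it avoids the induction and the integration by parts entirely; its only cost is the (standard, and correctly flagged) need to justify that this realization has the prescribed conditional law of the $Z_i$, which the continuity of $\lambda$ and the divergence of the cumulative rate (your appeal to Lemma~\ref{lem:finite-switching-times}) indeed guarantee. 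Both proofs are valid; yours is arguably the cleaner of the two.
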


\begin{proof}
We assume $y,j$ are fixed and suppress the $(y,j)$-subscript in $\P_{y,j}$ etc. Introduce the notation $Y_k = Y(T_k)$, $J_k = J(T_k)$. 
Observe that on $\{T_{k-1} \leq t \leq T_k\}$,
\begin{align*} |Y(t) - y| & =  \left| J_{k-1} (t - T_{k-1}) + Y_{k-1} - y \right| \leq t - T_{k-1} +  \left| \sum_{i=1}^{k-1} (Y_i - Y_{i-1}) \right| \\
& \leq t- T_{k-1} + T_{k-1} = t.
\end{align*}
It follows that on $\{0  \leq t \leq T_k\}$, for every $s \leq t$,  $Y(s) \in [y- s, y+s] \subset [y- t, y+ t]$.
By Assumption~\ref{ass:construction}, $\lambda$ is bounded on $[y - t, y+ t]$, say by a constant $\lambda_{\max} > 0$.  

We will show by induction that, for $0 \leq s \leq t$ and $k \in \N \cup \{0\}$,
\[ \P(T_k \leq s) \leq 1 - \exp(-\lambda_{\max} s)  \sum_{j=0}^{k-1} \frac{(\lambda_{\max} s)^j}{j!}.
\]
For $k = 0$ this is trivial.
We have for any $k \in \N$ and $0 \leq s \leq t$,
\begin{align*} \P(T_k \leq s \mid T_{k-1}) & = \E \left[ \P(T_k \leq s \mid Y_{k-1},J_{k-1}) \1_{\{T_{k-1} \leq s\}}\mid T_{k-1}  \right] \\
 & = \E_{y,j} \left[ \left(1 - \exp \left( - \int_0^{s-T_{k-1}} \lambda(Y(r),J(r)) \ d r \right) \right) \1_{\{T_{k-1} \leq s\}} \mid T_{k-1} \right]  \\
 & \leq \left( 1 - \exp( -\lambda_{\max}(s-T_{k-1}) ) \right)\1_{\{T_{k-1} \leq s\}}.
\end{align*}
For $s \leq t$ it follows that
\begin{align}
\nonumber \P_{y,j}(T_k \leq s) & = \E_{y,j} \left[ \P_{y,j}(T_k \leq s  \mid T_{k-1}) \right] \leq  \E \left[  \left( 1 - \exp( -\lambda_{\max}(s-T_{k-1}) ) \right)\1_{\{T_{k-1} \leq s\}}  \right] \\
\label{eq:poisson-rate} & = \P(T_{k-1} \leq s) - \exp(-\lambda_{\max} s)\E \left[  \exp( \lambda_{\max} T_{k-1} )\1_{\{T_{k-1} \leq s\}}  \right].
\end{align}
Let $G$ denote the distribution function of $T_{k-1}$ and note by the induction hypothesis for $k-1$, 
\[ G(r) = \P(T_{k-1} \leq r) \leq 1 - \exp(-\lambda_{\max} r) \sum_{i=0}^{k-2} \frac{(\lambda_{\max} r)^i}{i!}, \quad 0 \leq r \leq t.\] 
Then
\begin{align*}
 & \E\left[ \exp( \lambda_{\max} T_{k-1} )\1_{\{T_{k-1} \leq s\}}  \right] \\
 & = \int_0^s \exp(\lambda_{\max} r) d G(r) = \left[ \exp(\lambda_{\max} r) G(r) \right]_0^s - \lambda_{\max} \int_0^s \exp(\lambda_{\max} r) G(r) \ d r \\
& \geq \exp(\lambda_{\max} s) \P(T_{k-1} \leq s) - \lambda_{\max} \int_0^s \exp(\lambda_{\max} r) \left( 1 -\exp(-\lambda_{\max} r) \sum_{i=0}^{k-2} \frac{(\lambda_{\max} r)^i}{i!} \right)\ d r  \\
& = \exp(\lambda_{\max} s) \P(T_{k-1} \leq s) + 1 - \exp(\lambda_{\max} s) + \sum_{i=0}^{k-2} \frac{(\lambda_{\max} s)^{i+1}}{(i+1)!}.
\end{align*}
Inserting this expression into~\eqref{eq:poisson-rate}, the induction hypothesis follows for $k$.
It now follows by the Fatou Lemma that
\[ P(N(t) = \infty) = \P(\liminf_{k \rightarrow \infty} \{T_k \leq t\}) \leq \liminf_{k \rightarrow \infty} \P(T_k \leq t) = 0.\]
\end{proof}

\subsubsection{Regularity}
\label{sec:proofs-regularity}

The total variation distance between measures on a Polish space is defined as usual by
\[ \| \nu - \mu\|_{\mathrm{TV}} := \sup_{A} | \nu(A) - \mu(A)|,\]
where the supremum is over all Borel sets.

\begin{proof}[Proof of Proposition~\ref{prop:feller}]
Let $\varphi \in C_0(E)$. The value of $P(t) \varphi(y,j)$ only depends on values of $\varphi$ within the bounded set $([y - t, y + t], \pm 1) \subset E$. Since $\varphi$ vanishes at infinity $P(t) \varphi$ vanishes at infinity as well. It remains to establish continuity of $P(t) \varphi$.

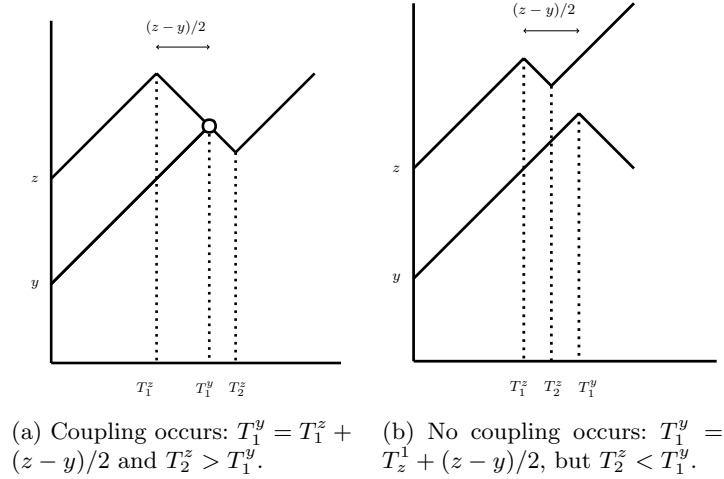
\begin{figure}[ht]
\begin{subfigure}[b]{0.3 \textwidth}\resizebox{\textwidth}{!}{
\ifx\du\undefined
  \newlength{\du}
\fi
\setlength{\du}{15\unitlength}
\begin{tikzpicture}
\pgftransformxscale{1.000000}
\pgftransformyscale{-1.000000}
\definecolor{dialinecolor}{rgb}{0.000000, 0.000000, 0.000000}
\pgfsetstrokecolor{dialinecolor}
\definecolor{dialinecolor}{rgb}{1.000000, 1.000000, 1.000000}
\pgfsetfillcolor{dialinecolor}
\pgfsetlinewidth{0.100000\du}
\pgfsetdash{}{0pt}
\pgfsetdash{}{0pt}
\pgfsetbuttcap
{
\definecolor{dialinecolor}{rgb}{0.000000, 0.000000, 0.000000}
\pgfsetfillcolor{dialinecolor}
\definecolor{dialinecolor}{rgb}{0.000000, 0.000000, 0.000000}
\pgfsetstrokecolor{dialinecolor}
\draw (5.000000\du,2.000000\du)--(5.000000\du,15.000000\du);
}
\pgfsetlinewidth{0.100000\du}
\pgfsetdash{}{0pt}
\pgfsetdash{}{0pt}
\pgfsetbuttcap
{
\definecolor{dialinecolor}{rgb}{0.000000, 0.000000, 0.000000}
\pgfsetfillcolor{dialinecolor}
\definecolor{dialinecolor}{rgb}{0.000000, 0.000000, 0.000000}
\pgfsetstrokecolor{dialinecolor}
\draw (16.000000\du,15.000000\du)--(5.000000\du,15.000000\du);
}
\pgfsetlinewidth{0.100000\du}
\pgfsetdash{}{0pt}
\pgfsetdash{}{0pt}
\pgfsetbuttcap
{
\definecolor{dialinecolor}{rgb}{0.000000, 0.000000, 0.000000}
\pgfsetfillcolor{dialinecolor}
\definecolor{dialinecolor}{rgb}{0.000000, 0.000000, 0.000000}
\pgfsetstrokecolor{dialinecolor}
\draw (5.000000\du,8.000000\du)--(9.000000\du,4.000000\du);
}
\pgfsetlinewidth{0.100000\du}
\pgfsetdash{}{0pt}
\pgfsetdash{}{0pt}
\pgfsetbuttcap
{
\definecolor{dialinecolor}{rgb}{0.000000, 0.000000, 0.000000}
\pgfsetfillcolor{dialinecolor}
\definecolor{dialinecolor}{rgb}{0.000000, 0.000000, 0.000000}
\pgfsetstrokecolor{dialinecolor}
\draw (9.000000\du,4.000000\du)--(12.000000\du,7.000000\du);
}
\pgfsetlinewidth{0.100000\du}
\pgfsetdash{{\pgflinewidth}{0.200000\du}}{0cm}
\pgfsetdash{{\pgflinewidth}{0.200000\du}}{0cm}
\pgfsetbuttcap
{
\definecolor{dialinecolor}{rgb}{0.000000, 0.000000, 0.000000}
\pgfsetfillcolor{dialinecolor}
\definecolor{dialinecolor}{rgb}{0.000000, 0.000000, 0.000000}
\pgfsetstrokecolor{dialinecolor}
\draw (9.000000\du,4.000000\du)--(9.000000\du,15.000000\du);
}
\pgfsetlinewidth{0.100000\du}
\pgfsetdash{{\pgflinewidth}{0.200000\du}}{0cm}
\pgfsetdash{{\pgflinewidth}{0.200000\du}}{0cm}
\pgfsetbuttcap
{
\definecolor{dialinecolor}{rgb}{0.000000, 0.000000, 0.000000}
\pgfsetfillcolor{dialinecolor}
\definecolor{dialinecolor}{rgb}{0.000000, 0.000000, 0.000000}
\pgfsetstrokecolor{dialinecolor}
\draw (11.000000\du,6.000000\du)--(11.000000\du,15.000000\du);
}
\definecolor{dialinecolor}{rgb}{0.000000, 0.000000, 0.000000}
\pgfsetstrokecolor{dialinecolor}
\node[anchor=west] at (4.000000\du,12.000000\du){$y$};
\definecolor{dialinecolor}{rgb}{0.000000, 0.000000, 0.000000}
\pgfsetstrokecolor{dialinecolor}
\node[anchor=west] at (4.000000\du,8.000000\du){$z$};
\definecolor{dialinecolor}{rgb}{0.000000, 0.000000, 0.000000}
\pgfsetstrokecolor{dialinecolor}
\node[anchor=west] at (8.000000\du,16.000000\du){$T_1^z$};
\definecolor{dialinecolor}{rgb}{0.000000, 0.000000, 0.000000}
\pgfsetstrokecolor{dialinecolor}
\node[anchor=west] at (10.2500000\du,16.000000\du){$T_1^y$};
\pgfsetlinewidth{0.100000\du}
\pgfsetdash{}{0pt}
\pgfsetdash{}{0pt}
\pgfsetbuttcap
{
\definecolor{dialinecolor}{rgb}{0.000000, 0.000000, 0.000000}
\pgfsetfillcolor{dialinecolor}
\definecolor{dialinecolor}{rgb}{0.000000, 0.000000, 0.000000}
\pgfsetstrokecolor{dialinecolor}
\draw (5.000000\du,12.000000\du)--(11.212132\du,5.787868\du);
}
\definecolor{dialinecolor}{rgb}{0.000000, 0.000000, 0.000000}
\pgfsetstrokecolor{dialinecolor}
\draw (5.000000\du,12.000000\du)--(10.823223\du,6.176777\du);
\pgfsetlinewidth{0.100000\du}
\pgfsetdash{}{0pt}
\pgfsetmiterjoin
\pgfsetbuttcap
\definecolor{dialinecolor}{rgb}{1.000000, 1.000000, 1.000000}
\pgfsetfillcolor{dialinecolor}
\pgfpathmoveto{\pgfpoint{11.176777\du}{5.823223\du}}
\pgfpathcurveto{\pgfpoint{11.265165\du}{5.911612\du}}{\pgfpoint{11.265165\du}{6.088388\du}}{\pgfpoint{11.176777\du}{6.176777\du}}
\pgfpathcurveto{\pgfpoint{11.088388\du}{6.265165\du}}{\pgfpoint{10.911612\du}{6.265165\du}}{\pgfpoint{10.823223\du}{6.176777\du}}
\pgfpathcurveto{\pgfpoint{10.734835\du}{6.088388\du}}{\pgfpoint{10.734835\du}{5.911612\du}}{\pgfpoint{10.823223\du}{5.823223\du}}
\pgfpathcurveto{\pgfpoint{10.911612\du}{5.734835\du}}{\pgfpoint{11.088388\du}{5.734835\du}}{\pgfpoint{11.176777\du}{5.823223\du}}
\pgfusepath{fill}
\definecolor{dialinecolor}{rgb}{0.000000, 0.000000, 0.000000}
\pgfsetstrokecolor{dialinecolor}
\pgfpathmoveto{\pgfpoint{11.176777\du}{5.823223\du}}
\pgfpathcurveto{\pgfpoint{11.265165\du}{5.911612\du}}{\pgfpoint{11.265165\du}{6.088388\du}}{\pgfpoint{11.176777\du}{6.176777\du}}
\pgfpathcurveto{\pgfpoint{11.088388\du}{6.265165\du}}{\pgfpoint{10.911612\du}{6.265165\du}}{\pgfpoint{10.823223\du}{6.176777\du}}
\pgfpathcurveto{\pgfpoint{10.734835\du}{6.088388\du}}{\pgfpoint{10.734835\du}{5.911612\du}}{\pgfpoint{10.823223\du}{5.823223\du}}
\pgfpathcurveto{\pgfpoint{10.911612\du}{5.734835\du}}{\pgfpoint{11.088388\du}{5.734835\du}}{\pgfpoint{11.176777\du}{5.823223\du}}
\pgfusepath{stroke}
\pgfsetlinewidth{0.000000\du}
\pgfsetdash{}{0pt}
\pgfsetdash{}{0pt}
\pgfsetbuttcap
{
\definecolor{dialinecolor}{rgb}{0.000000, 0.000000, 0.000000}
\pgfsetfillcolor{dialinecolor}
\pgfsetarrowsstart{to}
\pgfsetarrowsend{to}
\definecolor{dialinecolor}{rgb}{0.000000, 0.000000, 0.000000}
\pgfsetstrokecolor{dialinecolor}
\draw (9.000000\du,3.000000\du)--(11.000000\du,3.000000\du);
}
\definecolor{dialinecolor}{rgb}{0.000000, 0.000000, 0.000000}
\pgfsetstrokecolor{dialinecolor}
\node[anchor=west] at (8.400000\du,2.250000\du){$(z-y)/2$};
\pgfsetlinewidth{0.100000\du}
\pgfsetdash{}{0pt}
\pgfsetdash{}{0pt}
\pgfsetbuttcap
{
\definecolor{dialinecolor}{rgb}{0.000000, 0.000000, 0.000000}
\pgfsetfillcolor{dialinecolor}
\definecolor{dialinecolor}{rgb}{0.000000, 0.000000, 0.000000}
\pgfsetstrokecolor{dialinecolor}
\draw (12.000000\du,7.000000\du)--(15.000000\du,4.000000\du);
}
\definecolor{dialinecolor}{rgb}{0.000000, 0.000000, 0.000000}
\pgfsetstrokecolor{dialinecolor}
\node[anchor=west] at (11.500000\du,16.000000\du){$T_2^z$};
\pgfsetlinewidth{0.100000\du}
\pgfsetdash{{\pgflinewidth}{0.200000\du}}{0cm}
\pgfsetdash{{\pgflinewidth}{0.200000\du}}{0cm}
\pgfsetbuttcap
{
\definecolor{dialinecolor}{rgb}{0.000000, 0.000000, 0.000000}
\pgfsetfillcolor{dialinecolor}
\definecolor{dialinecolor}{rgb}{0.000000, 0.000000, 0.000000}
\pgfsetstrokecolor{dialinecolor}
\draw (12.000000\du,7.000000\du)--(12.000000\du,15.000000\du);
}
\end{tikzpicture}
}
\caption{Coupling occurs: $T_1^y = T_1^z + (z-y)/2$ and $T_2^z > T_1^y$.}
\label{fig:coupling}
\end{subfigure}
\quad 
\begin{subfigure}[b]{0.3 \textwidth}\resizebox{\textwidth}{!}{
\ifx\du\undefined
  \newlength{\du}
\fi
\setlength{\du}{15\unitlength}
\begin{tikzpicture}
\pgftransformxscale{1.000000}
\pgftransformyscale{-1.000000}
\definecolor{dialinecolor}{rgb}{0.000000, 0.000000, 0.000000}
\pgfsetstrokecolor{dialinecolor}
\definecolor{dialinecolor}{rgb}{1.000000, 1.000000, 1.000000}
\pgfsetfillcolor{dialinecolor}
\pgfsetlinewidth{0.100000\du}
\pgfsetdash{}{0pt}
\pgfsetdash{}{0pt}
\pgfsetbuttcap
{
\definecolor{dialinecolor}{rgb}{0.000000, 0.000000, 0.000000}
\pgfsetfillcolor{dialinecolor}
\definecolor{dialinecolor}{rgb}{0.000000, 0.000000, 0.000000}
\pgfsetstrokecolor{dialinecolor}
\draw (5.000000\du,2.000000\du)--(5.000000\du,15.000000\du);
}
\pgfsetlinewidth{0.100000\du}
\pgfsetdash{}{0pt}
\pgfsetdash{}{0pt}
\pgfsetbuttcap
{
\definecolor{dialinecolor}{rgb}{0.000000, 0.000000, 0.000000}
\pgfsetfillcolor{dialinecolor}
\definecolor{dialinecolor}{rgb}{0.000000, 0.000000, 0.000000}
\pgfsetstrokecolor{dialinecolor}
\draw (16.000000\du,15.000000\du)--(5.000000\du,15.000000\du);
}
\pgfsetlinewidth{0.100000\du}
\pgfsetdash{}{0pt}
\pgfsetdash{}{0pt}
\pgfsetbuttcap
{
\definecolor{dialinecolor}{rgb}{0.000000, 0.000000, 0.000000}
\pgfsetfillcolor{dialinecolor}
\definecolor{dialinecolor}{rgb}{0.000000, 0.000000, 0.000000}
\pgfsetstrokecolor{dialinecolor}
\draw (5.000000\du,8.000000\du)--(9.000000\du,4.000000\du);
}
\pgfsetlinewidth{0.100000\du}
\pgfsetdash{}{0pt}
\pgfsetdash{}{0pt}
\pgfsetbuttcap
{
\definecolor{dialinecolor}{rgb}{0.000000, 0.000000, 0.000000}
\pgfsetfillcolor{dialinecolor}
\definecolor{dialinecolor}{rgb}{0.000000, 0.000000, 0.000000}
\pgfsetstrokecolor{dialinecolor}
\draw (9.000000\du,4.000000\du)--(10.000000\du,5.000000\du);
}
\pgfsetlinewidth{0.100000\du}
\pgfsetdash{{\pgflinewidth}{0.200000\du}}{0cm}
\pgfsetdash{{\pgflinewidth}{0.200000\du}}{0cm}
\pgfsetbuttcap
{
\definecolor{dialinecolor}{rgb}{0.000000, 0.000000, 0.000000}
\pgfsetfillcolor{dialinecolor}
\definecolor{dialinecolor}{rgb}{0.000000, 0.000000, 0.000000}
\pgfsetstrokecolor{dialinecolor}
\draw (9.000000\du,4.000000\du)--(9.000000\du,15.000000\du);
}
\pgfsetlinewidth{0.100000\du}
\pgfsetdash{{\pgflinewidth}{0.200000\du}}{0cm}
\pgfsetdash{{\pgflinewidth}{0.200000\du}}{0cm}
\pgfsetbuttcap
{
\definecolor{dialinecolor}{rgb}{0.000000, 0.000000, 0.000000}
\pgfsetfillcolor{dialinecolor}
\definecolor{dialinecolor}{rgb}{0.000000, 0.000000, 0.000000}
\pgfsetstrokecolor{dialinecolor}
\draw (11.000000\du,6.000000\du)--(11.000000\du,15.000000\du);
}
\definecolor{dialinecolor}{rgb}{0.000000, 0.000000, 0.000000}
\pgfsetstrokecolor{dialinecolor}
\node[anchor=west] at (4.000000\du,12.000000\du){$y$};
\definecolor{dialinecolor}{rgb}{0.000000, 0.000000, 0.000000}
\pgfsetstrokecolor{dialinecolor}
\node[anchor=west] at (4.000000\du,8.000000\du){$z$};
\definecolor{dialinecolor}{rgb}{0.000000, 0.000000, 0.000000}
\pgfsetstrokecolor{dialinecolor}
\node[anchor=west] at (8.250000\du,16.000000\du){$T_1^z$};
\definecolor{dialinecolor}{rgb}{0.000000, 0.000000, 0.000000}
\pgfsetstrokecolor{dialinecolor}
\node[anchor=west] at (10.750000\du,16.000000\du){$T_1^y$};
\pgfsetlinewidth{0.100000\du}
\pgfsetdash{}{0pt}
\pgfsetdash{}{0pt}
\pgfsetbuttcap
{
\definecolor{dialinecolor}{rgb}{0.000000, 0.000000, 0.000000}
\pgfsetfillcolor{dialinecolor}
\definecolor{dialinecolor}{rgb}{0.000000, 0.000000, 0.000000}
\pgfsetstrokecolor{dialinecolor}
\draw (5.000000\du,12.000000\du)--(11.000000\du,6.000000\du);
}
\pgfsetlinewidth{0.000000\du}
\pgfsetdash{}{0pt}
\pgfsetdash{}{0pt}
\pgfsetbuttcap
{
\definecolor{dialinecolor}{rgb}{0.000000, 0.000000, 0.000000}
\pgfsetfillcolor{dialinecolor}
\pgfsetarrowsstart{to}
\pgfsetarrowsend{to}
\definecolor{dialinecolor}{rgb}{0.000000, 0.000000, 0.000000}
\pgfsetstrokecolor{dialinecolor}
\draw (9.000000\du,3.000000\du)--(11.000000\du,3.000000\du);
}
\definecolor{dialinecolor}{rgb}{0.000000, 0.000000, 0.000000}
\pgfsetstrokecolor{dialinecolor}
\node[anchor=west] at (8.400000\du,2.250000\du){$(z-y)/2$};
\pgfsetlinewidth{0.100000\du}
\pgfsetdash{}{0pt}
\pgfsetdash{}{0pt}
\pgfsetbuttcap
{
\definecolor{dialinecolor}{rgb}{0.000000, 0.000000, 0.000000}
\pgfsetfillcolor{dialinecolor}
\definecolor{dialinecolor}{rgb}{0.000000, 0.000000, 0.000000}
\pgfsetstrokecolor{dialinecolor}
\draw (10.000000\du,5.000000\du)--(13.000000\du,2.000000\du);
}
\pgfsetlinewidth{0.100000\du}
\pgfsetdash{}{0pt}
\pgfsetdash{}{0pt}
\pgfsetbuttcap
{
\definecolor{dialinecolor}{rgb}{0.000000, 0.000000, 0.000000}
\pgfsetfillcolor{dialinecolor}
\definecolor{dialinecolor}{rgb}{0.000000, 0.000000, 0.000000}
\pgfsetstrokecolor{dialinecolor}
\draw (11.000000\du,6.000000\du)--(13.000000\du,8.000000\du);
}
\pgfsetlinewidth{0.100000\du}
\pgfsetdash{{\pgflinewidth}{0.200000\du}}{0cm}
\pgfsetdash{{\pgflinewidth}{0.200000\du}}{0cm}
\pgfsetbuttcap
{
\definecolor{dialinecolor}{rgb}{0.000000, 0.000000, 0.000000}
\pgfsetfillcolor{dialinecolor}
\definecolor{dialinecolor}{rgb}{0.000000, 0.000000, 0.000000}
\pgfsetstrokecolor{dialinecolor}
\draw (10.000000\du,5.000000\du)--(10.000000\du,15.000000\du);
}
\definecolor{dialinecolor}{rgb}{0.000000, 0.000000, 0.000000}
\pgfsetstrokecolor{dialinecolor}
\node[anchor=west] at (9.500000\du,16.000000\du){$T_2^z$};
\end{tikzpicture}}
\caption{No coupling occurs: $T_1^y = T^1_z + (z-y)/2$, but $T_2^z < T_1^y$.}
\label{fig:no_coupling}
\end{subfigure}
\caption{Illustration of the coupling used in the proof of Proposition~\ref{prop:feller}.}
\label{fig:coupling-no-coupling}
\end{figure}

We construct a coupling of $(Y,J)$ starting from two different initial conditions, $(y,j)$ and $(z,j)$, as follows. 
Let $(y,j), (z,j) \in E$ and suppose $z \in \R$. Without loss of generality assume $j = +1$ and $z \geq y$. Let $\nu_1$ denote the distribution of $T_1 + (z-y)/2$, with initial condition $(z,j)$, i.e. $\nu_1$ has distribution function
\[ H_1(t;y,z) = \P_{z,j} (T_1 + (z-y)/2 \leq t) = \P_{z,j}(T_1 \leq t - (z-y)/2) = 1 - F(t-(z-y)/2; z,j),\]
and let $\nu_2$ denote the distribution of $T_1$ with initial condition $(y,j)$, i.e. $\nu_2$ has distribution function
\[ H_2(t;y,z) = \P_{y,j} (T_1 \leq t) = 1 - F(t;y,j).\]
Let $c_1(y,z) := \| \nu_1 - \nu_2 \|_{\mathrm{TV}}$. There exists a `maximal' coupling $(R_1, R_2)$ under a probability measure $\P$ of $\nu_1$ and $\nu_2$ such that $\P(R_1 \neq R_2) = c_1(y,z)$, see e.g. \cite[Theorem I.5.2]{Lindvall2002}.
Use $(Y^y,J^y)$ to denote the process starting from initial condition $(y,j)$ and $(Y^z,J^z)$ for the process starting from initial condition $(z,j)$. We introduce a dependence between the two processes through the distribution of the first replica switch time, $T_1$. Using the same superscript notation here, we let $T_1^z = R_1 - (z-y)/2$ and $T_1^y := R_2$.
Let all other switch times $T_i^y$ and $T_i^z$ be defined as usual, i.e. $T_{i+1}^y = T_i^y + Z_{i+1}^y$ where $\P(Z_{i+1} \geq \zeta) = F(\zeta; Y_{T_i^y}^y,J_{T_i^y}^y)$, etc. and construct the continuous time processes $(Y^y, J^y)$ and $(Y^z,J^z)$ as in Section~\ref{sec:construction}.
Define an event
\[ \Omega_{\mathrm{coupling}} := \{ R_1=R_2 \ \mbox{and} \ T_2^z > T_1^y\},
\]
i.e. on $\Omega_{\mathrm{coupling}}$ a coupling occurs between $R_1$ and $R_2$, and $(Y^z,J^z)$ does not switch a second time before $T_1^y$. On $\Omega_{\mathrm{coupling}}$, 
\[ T_1^y = R_2 = R_1 = T_1^z + (z-y)/2.\] 
and hence
\[ Y^z(T_1^y) = z + T_1^z - (T_1^y - T_1^z) = z + T_1^z - (z-y)/2 = (y+z)/2 + T_1^z,\]
and 
\[ Y^y(T_1^y) = y + T_1^y = y + T_1^z + (z-y)/2 = (y+z)/2 + T_1^z,\]
i.e. $Y^z(T_1^y) = Y^y(T_1^y)$. By the Strong Markov property, the process
\[ (\widetilde Y^y, \widetilde J^y)(t, \omega) := \left\{ \begin{array}{ll} (Y^z(t, \omega), J^z(t, \omega)) \quad & \mbox{$\omega \in \Omega_{\mathrm{coupling}}$, $t \geq T_1^y$}, \\
                                                           (Y^y(t, \omega), J^y(t, \omega)) \quad & \mbox{otherwise,}
                                                          \end{array} \right.
\]
is a Markov process with generator $L$.
Since $H_1$ and $H_2$ have densities, we may evaluate
\begin{align*}
& c_1(y,z)  = \| \nu_1 - \nu_2 \|_{\mathrm{TV}} \\
& = \half \int_0^{\infty} |H_1'(t) - H_2'(t)| \ d t = \half \int_0^{\frac{z-y}{2}} |H_2'(t)| \ d t + \half \int_{\frac{z-y}{2}}^{\infty} |H_1'(t) - H_2'(t)| \ d t  \\
 & = \half \int_0^{\frac{z-y}{2}} \lambda(y + t) F(t;y,j) \ d t \\
 & \quad + \half \int_{\frac{z-y}{2}} \left| \lambda(z + t-(z-y)/2) F(t - (z-y)/2; z, j) - \lambda(y + t) F(t;y,j)\right|  \ d t
\end{align*}
The second integrand is trivially dominated by
\[  \lambda(z + t-(z-y)/2) F(t - (z-y)/2; z, j) + \lambda(y + t) F(t;y,j),\] which is integrable (since it is the sum of two density functions). Since $\lambda$ and $F$ depend continuously on $y,z$, we may apply the dominated convergence theorem to conclude that $c_1(y,z)$ is continuous in $y,z$. Also note that $c_1(y,y) = c_1(z,z) = 0$. Hence $\lim_{y \rightarrow z} c_1(y,z) = \lim_{z \rightarrow y} c_1(y,z)= 0$.
Also let
\begin{align*}
 c_2(y,z) = \P_{z,j}(T_2^z \leq T_1^z + (z-y)/2) = 1 - \E_{z,j}[ F((z-y)/2; Y(T_1),J(T_1))],
\end{align*}
and note that $c_2$ is continuous in $(y,z)$ and $\lim_{y \rightarrow z} c_2(y,z) = 0$. We estimate
\begin{align*} \P(\Omega \setminus \Omega_{\mathrm{coupling}}) & = \P(R_1 \neq R_2 \ \mbox{or} \ T_2^z \leq T_1^y) = \P(R_1 \neq R_2 \ \mbox{or} \ T_2^z \leq T_1^z + (z-y)/2 )\\
 & \leq \P(R_1 \neq R_2) + \P(T_2^z \leq T_1^z + (z-y)/2) \\
 & = c_1(y,z) + c_2(y,z).
\end{align*}

Fix $t \geq 0$. Let $\varepsilon > 0$ and let $y \in \R$. Pick $\delta > 0$ such that $c_1(y,z) + c_2(y,z) < \varepsilon / (2 \|\varphi\|_{\infty})$ for all $z$ for which  $|y-z|<\delta$ and (using uniform continuity) $|\varphi(\zeta_1) - \varphi(\zeta_2)| < \varepsilon / 2$ for all $\zeta_1,\zeta_2: |\zeta_1 - \zeta_2| < \delta$ with $\zeta_i \in [y - t, z+t]$.
Then, for $|y-z|<\delta$, using that on $\Omega_{\mathrm{coupling}}$, the processes $Y^y(t)$ and $Y^z(t)$ remain within distance $|y-z|$ of each other and within $[y-t,z+t]$, we estimate
\begin{align*} |\E_{y,j} \varphi(Y(t),J(t)) - \E_{z,j} \varphi(Y(t),J(t))| & \leq \E |\varphi(Y^y(t),J^y(t)) - \varphi(Y^z(t), J^z(t))|  \\
 & \leq \P(\Omega_{\mathrm{coupling}}) \varepsilon/2 +  (c_1(y,z) + c_2(y,z)) \| \varphi \|_{\infty} \\
& < \varepsilon,
\end{align*}
which establishes continuity of $P(t)\varphi(y,j)$ in $y$ for $j=+1$. The case $j = -1$ is analogous.
\end{proof}

\subsubsection{Petite sets}

Let $K$ denote the resolvent Markov kernel given by
\[ K((y,j),A) = \int_0^{\infty} \exp(-t) \P_{y,j}((Y(t),J(t)) \in A)  \ d t, \quad (y,j) \in E, A \in \mathcal B(E).\]
The notion of a petite set plays an important role in establishing exponential ergodicity for a continuous time Markov process, see e.g. \cite{MeynTweedie1993-III}. A set $C \subset E$ is \emph{petite} for $K$ if there exists a nontrivial reference measure $\nu$ on $E$ such that $K((y,j),A) \geq \nu(A)$, for any $(y,j) \in C$ and $A \in \mathcal B(E)$.
The following lemma is instrumental in establishing exponential ergodicity (Theorem~\ref{thm:exponential-ergodicity}).

\begin{lemma}
\label{lem:compact-sets-are-petite}
Suppose Assumption~\ref{ass:construction} holds. Then every compact set $C \subset E$ is petite for $K$.
\end{lemma}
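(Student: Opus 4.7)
The plan is to exploit one-switch trajectories of the zig-zag process to produce an absolutely continuous lower bound on the transition distribution, uniform over the starting point in $C$, and then integrate against $e^{-t}$ to obtain the corresponding bound on the resolvent $K$. Since finite unions of petite sets are petite (a standard fact from the theory of Markov chain stability), it suffices to establish the property separately for $C^+ := C \cap (\R \times \{+1\})$ and $C^- := C \cap (\R \times \{-1\})$, and the latter will follow by an entirely symmetric argument, so I focus on $C^+$, picking $R > 0$ with $C^+ \subset [-R,R] \times \{+1\}$. The concrete target is to find $T > 0$, an interval $[a,b] \subset \R$ and a constant $c > 0$ such that
\[ \P_{y,+1}\bigl( (Y(T), J(T)) \in dz \times \{-1\} \bigr) \geq c \, \1_{[a,b]}(z) \, dz \quad \text{for all } y \in [-R, R]. \]

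First I would isolate the contribution from trajectories with exactly one switch before time $T$. If the switch occurs at time $s \in (0, T)$, then $(Y(T), J(T)) = (y + 2s - T, -1)$, and the associated density in $s$ equals $\lambda(y+s, +1) F(s; y, +1) F(T-s; y+s, -1)$, with $F$ the survival function of~\eqref{eq:survival}. The change of variables $z = y + 2s - T$ turns this into a density in $z \in (y - T, y + T)$ given by $\tfrac{1}{2} \lambda(y+s,+1) F(s;y,+1) F(T-s;y+s,-1)$ evaluated at $s = (T + z - y)/2$, and this is a pointwise lower bound on the actual transition density onto the $\{-1\}$-slice.

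For the uniform bound I would invoke Assumption~\ref{ass:construction}: one has $\lambda(y+s,+1) \geq \lambda_{\min}$ whenever $y + s \geq y_0$, which is guaranteed by restricting to $s \geq y_0 + R$, and continuity of $\lambda$ produces an upper bound $\bar\lambda := \sup\{ \lambda(\eta, \pm 1) : |\eta| \leq R + T\} < \infty$, yielding $F(s;y,+1) F(T-s;y+s,-1) \geq e^{-T \bar\lambda}$ on the relevant range. Taking $T > 3R + 2y_0$ and restricting $s$ to $[y_0 + R, T - y_0 - R]$, the corresponding $z$-range common to all $y \in [-R, R]$ is the non-degenerate interval $[3R + 2y_0 - T, T - 3R - 2y_0]$, on which the one-switch density is at least $\tfrac{1}{2} \lambda_{\min} e^{-T \bar\lambda}$. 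To translate this into a bound on the resolvent kernel $K$, I would observe that the same estimate, with uniform constants, applies for every $t$ in a small neighbourhood of $T$, and integrate $e^{-t}$ over this neighbourhood to produce a petite measure of the form $\nu(A) = c' \,\mathrm{Leb}\bigl(\{z : (z,-1) \in A\} \cap [a,b]\bigr)$ with $c' > 0$.

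The main obstacle is the possible vanishing of $\lambda$ in the interior: Assumption~\ref{ass:construction} controls the switching rate only in the tail region $\{jy \geq y_0\}$, so starting from $y \in [-R, R]$ the trajectory must traverse a potentially `dead' zone before its switch rate is guaranteed to be bounded below. This forces the admissible switch-time window to be pushed out to $[y_0 + R, T - y_0 - R]$, and hence compels $T$ to be chosen large enough that the image of this window under $z = y + 2s - T$ still contains a non-degenerate interval common to every $y \in [-R, R]$. Once this geometric condition is arranged, the remaining bounds on $\lambda$ and $F$ combine routinely.
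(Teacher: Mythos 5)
Your analytic core---the one-switch lower bound, the change of variables $z = y + 2s - T$ giving the density $\tfrac12\,\lambda(y+s,+1)\,F(s;y,+1)\,F(T-s;y+s,-1)$ on the opposite replica, the use of Assumption~\ref{ass:construction} to bound $\lambda$ from below only after the trajectory has entered the tail region $\{y+s\ge y_0\}$, and the choice of $T$ large enough that the admissible $z$-window is common to all $y\in[-R,R]$---is sound, and is essentially the same computation as the paper's Claim (ii) (the display culminating in~\eqref{eq:estimate_one_switch}). The gap is in the assembly step. With the definition of petite used in this paper, $C$ is petite when a \emph{single} nontrivial measure $\nu$ satisfies $K((y,j),\cdot)\ge\nu$ for \emph{all} $(y,j)\in C$. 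Your argument produces, for $C^+$, a minorizing measure supported on an interval of the $\{-1\}$-replica, while the ``entirely symmetric'' argument for $C^-$ produces one supported on an interval of the $\{+1\}$-replica. These two measures are mutually singular, so they have no common nontrivial minorant and no single $\nu$ is obtained for $C^+\cup C^-$. The fact that finite unions of petite sets are petite is a statement about the Meyn--Tweedie notion of petiteness, in which the sampling distribution is allowed to change and $\psi$-irreducibility of the process is presupposed; it does not hand you a minorization of the fixed resolvent kernel $K$ on the union, and irreducibility has not been established at this point in the development (this lemma is effectively what establishes it). As written, the union step is therefore a genuine hole.

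The repair is to make both halves of $C$ land on the \emph{same} interval of the \emph{same} replica. The paper does this by sending $j=+1$ starters along the zero-switch trajectory (probability $F(t;y,+1)>0$ of no switch, landing at $y+t$ on the $\{+1\}$-replica, with no appeal to $\lambda_{\min}$ at all) and $j=-1$ starters along a one-switch trajectory whose switch occurs below $-y_0$, where Assumption~\ref{ass:construction} bounds $\lambda(\cdot,-1)$ from below; both yield a Lebesgue lower bound on the common window $[y_1,(1+\delta)y_1]\times\{+1\}$, so one reference measure serves all of $C$. Alternatively, you could keep your one-switch route for $C^+$ and append a second switch below $-y_0$ so that those trajectories also terminate on the $\{+1\}$-replica, at the cost of a two-dimensional switch-time integral. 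Either way, note that your one-switch scheme for $C^+$ is more elaborate than necessary: since you are free to choose the target replica, the zero-switch trajectory already gives an absolutely continuous lower bound for $j=+1$ starters.
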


\begin{proof}
Let $y_0 \geq 0$ and $\lambda_{\min} > 0$ be as defined in Assumption~\ref{ass:construction}(ii).
Without loss of generality, it is sufficient to show that any set $C$ of the form $C := [-y_1, y_1] \times \{-1,1\}$, with $y_1 \geq y_0$, is petite.  Indeed, given a compact set $\widetilde C$ choose $y_1 \geq y_0$ sufficiently large such that $\widetilde C \subset C$. If $C$ is petite then clearly $\widetilde C$ is petite.

Let $C = [-1,1] \times \{-1,1\}$ and $\delta > 0$. We will show that for any $\delta > 0$ there exists a constant $c > 0$ such that for every $(y,j) \in C$ and $\varphi \in B_b(E)$, $\varphi \geq 0$,
\begin{equation} \label{eq:petite}\int_0^{\infty} \exp(-t) \E_{y,j} [\varphi(Y(t),J(t))] \ d t \geq c \int_{y_1}^{(1+ \delta)y_1} \varphi(z,+1) \ d z.\end{equation}
This then establishes that $C$ is $\nu$-petite with $\nu$ proportional to Lebesgue measure on $[y_1,(1+\delta)y_1]\times\{+1\}$.

\begin{figure}[ht] 
\begin{subfigure}[b]{0.48 \textwidth}
\resizebox{0.7 \textwidth}{!}{ 
\ifx\du\undefined
  \newlength{\du}
\fi
\setlength{\du}{15\unitlength}
\begin{tikzpicture}
\pgftransformxscale{1.000000}
\pgftransformyscale{-1.000000}
\definecolor{dialinecolor}{rgb}{0.000000, 0.000000, 0.000000}
\pgfsetstrokecolor{dialinecolor}
\definecolor{dialinecolor}{rgb}{1.000000, 1.000000, 1.000000}
\pgfsetfillcolor{dialinecolor}
\pgfsetlinewidth{0.100000\du}
\pgfsetdash{}{0pt}
\pgfsetdash{}{0pt}
\pgfsetbuttcap
{
\definecolor{dialinecolor}{rgb}{0.000000, 0.000000, 0.000000}
\pgfsetfillcolor{dialinecolor}
\definecolor{dialinecolor}{rgb}{0.000000, 0.000000, 0.000000}
\pgfsetstrokecolor{dialinecolor}
\draw (6.000000\du,11.000000\du)--(6.000000\du,26.000000\du);
}
\pgfsetlinewidth{0.100000\du}
\pgfsetdash{}{0pt}
\pgfsetdash{}{0pt}
\pgfsetbuttcap
{
\definecolor{dialinecolor}{rgb}{0.000000, 0.000000, 0.000000}
\pgfsetfillcolor{dialinecolor}
\definecolor{dialinecolor}{rgb}{0.000000, 0.000000, 0.000000}
\pgfsetstrokecolor{dialinecolor}
\draw (6.000000\du,20.000000\du)--(20.000000\du,20.000000\du);
}
\definecolor{dialinecolor}{rgb}{0.000000, 0.000000, 0.000000}
\pgfsetstrokecolor{dialinecolor}
\node[anchor=west] at (5.00000\du,16.000000\du){$y_1$};
\definecolor{dialinecolor}{rgb}{0.000000, 0.000000, 0.000000}
\pgfsetstrokecolor{dialinecolor}
\node[anchor=west] at (4.400000\du,24.000000\du){$-y_1$};
\definecolor{dialinecolor}{rgb}{0.000000, 0.000000, 0.000000}
\pgfsetstrokecolor{dialinecolor}
\node[anchor=west] at (3.000000\du,13.000000\du){$(1+\delta) y_1$};
\pgfsetlinewidth{0.000000\du}
\pgfsetdash{}{0pt}
\pgfsetdash{}{0pt}
\pgfsetmiterjoin
\definecolor{dialinecolor}{rgb}{0.678431, 0.847059, 0.901961}
\pgfsetfillcolor{dialinecolor}
\fill (6.000000\du,13.000000\du)--(6.000000\du,16.000000\du)--(20.000000\du,16.000000\du)--(20.000000\du,13.000000\du)--cycle;
\definecolor{dialinecolor}{rgb}{0.000000, 0.000000, 0.000000}
\pgfsetstrokecolor{dialinecolor}
\draw (6.000000\du,13.000000\du)--(6.000000\du,16.000000\du)--(20.000000\du,16.000000\du)--(20.000000\du,13.000000\du)--cycle;
\pgfsetlinewidth{0.100000\du}
\pgfsetdash{}{0pt}
\pgfsetdash{}{0pt}
\pgfsetbuttcap
{
\definecolor{dialinecolor}{rgb}{0.000000, 0.000000, 0.000000}
\pgfsetfillcolor{dialinecolor}
\definecolor{dialinecolor}{rgb}{0.000000, 0.000000, 0.000000}
\pgfsetstrokecolor{dialinecolor}
\draw (6.000000\du,24.000000\du)--(17.000000\du,13.000000\du);
}
\pgfsetlinewidth{0.100000\du}
\pgfsetdash{}{0pt}
\pgfsetdash{}{0pt}
\pgfsetbuttcap
{
\definecolor{dialinecolor}{rgb}{0.000000, 0.000000, 0.000000}
\pgfsetfillcolor{dialinecolor}
\definecolor{dialinecolor}{rgb}{0.000000, 0.000000, 0.000000}
\pgfsetstrokecolor{dialinecolor}
\draw (6.000000\du,18.000000\du)--(11.000000\du,13.000000\du);
}
\pgfsetlinewidth{0.000000\du}
\pgfsetdash{{\pgflinewidth}{0.200000\du}}{0cm}
\pgfsetdash{{\pgflinewidth}{0.200000\du}}{0cm}
\pgfsetbuttcap
{
\definecolor{dialinecolor}{rgb}{0.000000, 0.000000, 0.000000}
\pgfsetfillcolor{dialinecolor}
\definecolor{dialinecolor}{rgb}{0.000000, 0.000000, 0.000000}
\pgfsetstrokecolor{dialinecolor}
\draw (8.000000\du,16.000000\du)--(8.000000\du,20.000000\du);
}
\pgfsetlinewidth{0.000000\du}
\pgfsetdash{{\pgflinewidth}{0.200000\du}}{0cm}
\pgfsetdash{{\pgflinewidth}{0.200000\du}}{0cm}
\pgfsetbuttcap
{
\definecolor{dialinecolor}{rgb}{0.000000, 0.000000, 0.000000}
\pgfsetfillcolor{dialinecolor}
\definecolor{dialinecolor}{rgb}{0.000000, 0.000000, 0.000000}
\pgfsetstrokecolor{dialinecolor}
\draw (11.000000\du,20.000000\du)--(11.000000\du,13.000000\du);
}
\pgfsetlinewidth{0.000000\du}
\pgfsetdash{{\pgflinewidth}{0.200000\du}}{0cm}
\pgfsetdash{{\pgflinewidth}{0.200000\du}}{0cm}
\pgfsetbuttcap
{
\definecolor{dialinecolor}{rgb}{0.000000, 0.000000, 0.000000}
\pgfsetfillcolor{dialinecolor}
\definecolor{dialinecolor}{rgb}{0.000000, 0.000000, 0.000000}
\pgfsetstrokecolor{dialinecolor}
\draw (17.000000\du,13.000000\du)--(17.000000\du,20.000000\du);
}
\definecolor{dialinecolor}{rgb}{0.000000, 0.000000, 0.000000}
\pgfsetstrokecolor{dialinecolor}
\node[anchor=west] at (7.500000\du,20.500000\du){$t_0$};
\definecolor{dialinecolor}{rgb}{0.000000, 0.000000, 0.000000}
\pgfsetstrokecolor{dialinecolor}
\node[anchor=west] at (10.500000\du,20.500000\du){$t_1$};
\definecolor{dialinecolor}{rgb}{0.000000, 0.000000, 0.000000}
\pgfsetstrokecolor{dialinecolor}
\node[anchor=west] at (16.000000\du,20.500000\du){$t_{\max}$};
\definecolor{dialinecolor}{rgb}{0.000000, 0.000000, 0.000000}
\pgfsetstrokecolor{dialinecolor}
\node[anchor=west] at (5.200000\du,18.000000\du){$y$};
\end{tikzpicture} }
\caption{Claim (i)}
\end{subfigure}
\begin{subfigure}[b]{0.48 \textwidth}
\resizebox{\textwidth}{!}{  
\ifx\du\undefined
  \newlength{\du}
\fi
\setlength{\du}{15\unitlength}
\begin{tikzpicture}
\pgftransformxscale{1.000000}
\pgftransformyscale{-1.000000}
\definecolor{dialinecolor}{rgb}{0.000000, 0.000000, 0.000000}
\pgfsetstrokecolor{dialinecolor}
\definecolor{dialinecolor}{rgb}{1.000000, 1.000000, 1.000000}
\pgfsetfillcolor{dialinecolor}
\pgfsetlinewidth{0.100000\du}
\pgfsetdash{}{0pt}
\pgfsetdash{}{0pt}
\pgfsetbuttcap
{
\definecolor{dialinecolor}{rgb}{0.000000, 0.000000, 0.000000}
\pgfsetfillcolor{dialinecolor}
\definecolor{dialinecolor}{rgb}{0.000000, 0.000000, 0.000000}
\pgfsetstrokecolor{dialinecolor}
\draw (6.000000\du,12.000000\du)--(6.000000\du,27.000000\du);
}
\pgfsetlinewidth{0.100000\du}
\pgfsetdash{}{0pt}
\pgfsetdash{}{0pt}
\pgfsetbuttcap
{
\definecolor{dialinecolor}{rgb}{0.000000, 0.000000, 0.000000}
\pgfsetfillcolor{dialinecolor}
\definecolor{dialinecolor}{rgb}{0.000000, 0.000000, 0.000000}
\pgfsetstrokecolor{dialinecolor}
\draw (6.000000\du,20.000000\du)--(31.000000\du,20.000000\du);
}
\definecolor{dialinecolor}{rgb}{0.000000, 0.000000, 0.000000}
\pgfsetstrokecolor{dialinecolor}
\node[anchor=west] at (5.000000\du,16.000000\du){$y_1$};
\definecolor{dialinecolor}{rgb}{0.000000, 0.000000, 0.000000}
\pgfsetstrokecolor{dialinecolor}
\node[anchor=west] at (4.400000\du,24.000000\du){$-y_1$};
\definecolor{dialinecolor}{rgb}{0.000000, 0.000000, 0.000000}
\pgfsetstrokecolor{dialinecolor}
\node[anchor=west] at (3.000000\du,13.000000\du){$(1+\delta) y_1$};
\pgfsetlinewidth{0.000000\du}
\pgfsetdash{}{0pt}
\pgfsetdash{}{0pt}
\pgfsetmiterjoin
\definecolor{dialinecolor}{rgb}{0.678431, 0.847059, 0.901961}
\pgfsetfillcolor{dialinecolor}
\fill (6.000000\du,13.000000\du)--(6.000000\du,16.000000\du)--(30.000000\du,16.000000\du)--(30.000000\du,13.000000\du)--cycle;
\definecolor{dialinecolor}{rgb}{0.000000, 0.000000, 0.000000}
\pgfsetstrokecolor{dialinecolor}
\draw (6.000000\du,13.000000\du)--(6.000000\du,16.000000\du)--(30.000000\du,16.000000\du)--(30.000000\du,13.000000\du)--cycle;
\definecolor{dialinecolor}{rgb}{0.000000, 0.000000, 0.000000}
\pgfsetstrokecolor{dialinecolor}
\node[anchor=west] at (5.000000\du,18.000000\du){$y$};
\pgfsetlinewidth{0.100000\du}
\pgfsetdash{}{0pt}
\pgfsetdash{}{0pt}
\pgfsetbuttcap
{
\definecolor{dialinecolor}{rgb}{0.000000, 0.000000, 0.000000}
\pgfsetfillcolor{dialinecolor}
\definecolor{dialinecolor}{rgb}{0.000000, 0.000000, 0.000000}
\pgfsetstrokecolor{dialinecolor}
\draw (6.000000\du,16.000000\du)--(16.000000\du,26.000000\du);
}
\pgfsetlinewidth{0.100000\du}
\pgfsetdash{}{0pt}
\pgfsetdash{}{0pt}
\pgfsetbuttcap
{
\definecolor{dialinecolor}{rgb}{0.000000, 0.000000, 0.000000}
\pgfsetfillcolor{dialinecolor}
\definecolor{dialinecolor}{rgb}{0.000000, 0.000000, 0.000000}
\pgfsetstrokecolor{dialinecolor}
\draw (16.000000\du,26.000000\du)--(29.000000\du,13.000000\du);
}
\pgfsetlinewidth{0.000000\du}
\pgfsetdash{{\pgflinewidth}{0.200000\du}}{0cm}
\pgfsetdash{{\pgflinewidth}{0.200000\du}}{0cm}
\pgfsetbuttcap
{
\definecolor{dialinecolor}{rgb}{0.000000, 0.000000, 0.000000}
\pgfsetfillcolor{dialinecolor}
\definecolor{dialinecolor}{rgb}{0.000000, 0.000000, 0.000000}
\pgfsetstrokecolor{dialinecolor}
\draw (29.000000\du,13.000000\du)--(29.000000\du,20.000000\du);
}
\definecolor{dialinecolor}{rgb}{0.000000, 0.000000, 0.000000}
\pgfsetstrokecolor{dialinecolor}
\node[anchor=west] at (27.500000\du,21.000000\du){$t_{\max}$};
\pgfsetlinewidth{0.100000\du}
\pgfsetdash{}{0pt}
\pgfsetdash{}{0pt}
\pgfsetbuttcap
{
\definecolor{dialinecolor}{rgb}{0.000000, 0.000000, 0.000000}
\pgfsetfillcolor{dialinecolor}
\definecolor{dialinecolor}{rgb}{0.000000, 0.000000, 0.000000}
\pgfsetstrokecolor{dialinecolor}
\draw (6.000000\du,18.000000\du)--(13.000000\du,25.000000\du);
}
\pgfsetlinewidth{0.100000\du}
\pgfsetdash{}{0pt}
\pgfsetdash{}{0pt}
\pgfsetbuttcap
{
\definecolor{dialinecolor}{rgb}{0.000000, 0.000000, 0.000000}
\pgfsetfillcolor{dialinecolor}
\definecolor{dialinecolor}{rgb}{0.000000, 0.000000, 0.000000}
\pgfsetstrokecolor{dialinecolor}
\draw (13.000000\du,25.000000\du)--(25.000000\du,13.000000\du);
}
\pgfsetlinewidth{0.000000\du}
\pgfsetdash{{\pgflinewidth}{0.200000\du}}{0cm}
\pgfsetdash{{\pgflinewidth}{0.200000\du}}{0cm}
\pgfsetbuttcap
{
\definecolor{dialinecolor}{rgb}{0.000000, 0.000000, 0.000000}
\pgfsetfillcolor{dialinecolor}
\definecolor{dialinecolor}{rgb}{0.000000, 0.000000, 0.000000}
\pgfsetstrokecolor{dialinecolor}
\draw (12.000000\du,24.000000\du)--(6.000000\du,24.000000\du);
}
\pgfsetlinewidth{0.000000\du}
\pgfsetdash{{\pgflinewidth}{0.200000\du}}{0cm}
\pgfsetdash{{\pgflinewidth}{0.200000\du}}{0cm}
\pgfsetbuttcap
{
\definecolor{dialinecolor}{rgb}{0.000000, 0.000000, 0.000000}
\pgfsetfillcolor{dialinecolor}
\definecolor{dialinecolor}{rgb}{0.000000, 0.000000, 0.000000}
\pgfsetstrokecolor{dialinecolor}
\draw (12.000000\du,20.000000\du)--(12.000000\du,24.000000\du);
}
\definecolor{dialinecolor}{rgb}{0.000000, 0.000000, 0.000000}
\pgfsetstrokecolor{dialinecolor}
\node[anchor=west] at (11.000000\du,19.500000\du){$t_0$};
\pgfsetlinewidth{0.000000\du}
\pgfsetdash{{\pgflinewidth}{0.200000\du}}{0cm}
\pgfsetdash{{\pgflinewidth}{0.200000\du}}{0cm}
\pgfsetbuttcap
{
\definecolor{dialinecolor}{rgb}{0.000000, 0.000000, 0.000000}
\pgfsetfillcolor{dialinecolor}
\definecolor{dialinecolor}{rgb}{0.000000, 0.000000, 0.000000}
\pgfsetstrokecolor{dialinecolor}
\draw (13.000000\du,25.000000\du)--(13.000000\du,20.000000\du);
}
\definecolor{dialinecolor}{rgb}{0.000000, 0.000000, 0.000000}
\pgfsetstrokecolor{dialinecolor}
\node[anchor=west] at (13.000000\du,19.500000\du){$s$};
\definecolor{dialinecolor}{rgb}{0.000000, 0.000000, 0.000000}
\pgfsetstrokecolor{dialinecolor}
\node[anchor=west] at (14.500000\du,27.50000\du){$\tau$};
\pgfsetlinewidth{0.000000\du}
\pgfsetdash{{\pgflinewidth}{0.200000\du}}{0cm}
\pgfsetdash{{\pgflinewidth}{0.200000\du}}{0cm}
\pgfsetbuttcap
{
\definecolor{dialinecolor}{rgb}{0.000000, 0.000000, 0.000000}
\pgfsetfillcolor{dialinecolor}
\definecolor{dialinecolor}{rgb}{0.000000, 0.000000, 0.000000}
\pgfsetstrokecolor{dialinecolor}
\draw (16.000000\du,27.000000\du)--(16.000000\du,26.000000\du);
}
\pgfsetlinewidth{0.000000\du}
\pgfsetdash{{\pgflinewidth}{0.200000\du}}{0cm}
\pgfsetdash{{\pgflinewidth}{0.200000\du}}{0cm}
\pgfsetbuttcap
{
\definecolor{dialinecolor}{rgb}{0.000000, 0.000000, 0.000000}
\pgfsetfillcolor{dialinecolor}
\definecolor{dialinecolor}{rgb}{0.000000, 0.000000, 0.000000}
\pgfsetstrokecolor{dialinecolor}
\draw (14.000000\du,24.000000\du)--(14.000000\du,27.000000\du);
}
\end{tikzpicture}}
\caption{Claim (ii)}
\end{subfigure}
\caption{Illustration of the proof of Lemma~\ref{lem:compact-sets-are-petite}}
\end{figure}
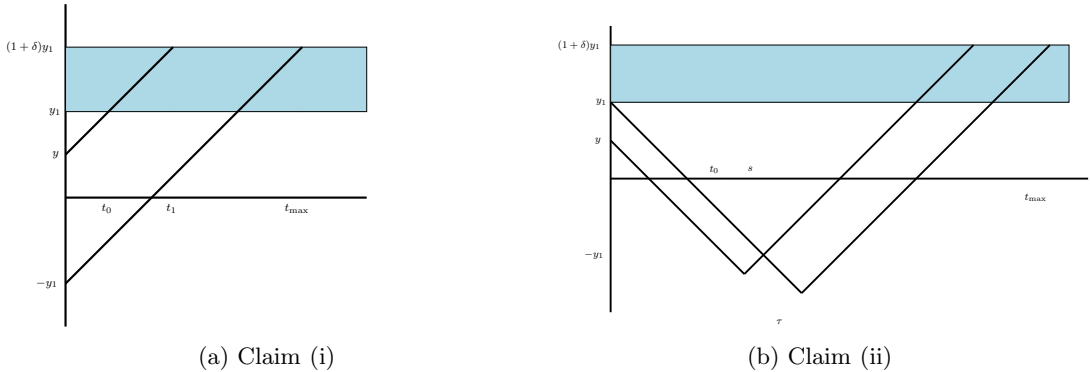

\emph{Claim (i): There exists a constant $c > 0$ such that for $\varphi \in B_b(E)$, $\varphi \geq 0$, and  $y \in [-y_1, y_1]$,~\eqref{eq:petite} holds for $j = +1$.}

\emph{Proof of Claim (i):} Let $j = +1$. Let $\lambda_{\max} := \max_{y \in [-y_1, (1+\delta)y_1]} \lambda(y,+1)$, which is finite by Assumption~\ref{ass:construction}(i). The time of reaching $(1+ \delta) y_1$ from $-y_1$ is $t_{\max} := (2+\delta) y_1$.
Let $c := \exp(-(\lambda_{\max} + 1) t_{\max})$.

Let $\varphi \in B_b(E)$, $\varphi \geq 0$, $y \in [-y_1,y_1]$ and $0 \leq t \leq t_{\max}$. Then
\begin{align*}
 \exp(-t)\E_{y,j} [ \varphi(Y(t),J(t))] & \geq \exp(-t)\E_{y,+1} \left[\varphi(Y(t),J(t)) \1_{\{T_1 \geq t\}} \right] \\
 & = \exp(-t) \varphi(y + t, +1) F(t;y,+1) \\
 & \geq \exp(-(\lambda_{\max} +1) t_{\max}) \varphi(y + t, + 1) = c \varphi(y + t, + 1).
\end{align*}
Define $t_0(y) := \inf\{ t \geq 0 : y + t = y_1 \} = y_1-y$ and $t_1(y) := \inf\{ t \geq 0: y + t = (1+ \delta) y_1\} = (1+\delta) y_1 - y \leq t_{\max}$. Then
\begin{align*} 
\int_0^{\infty} \exp(-t) \E_{y,j} [\varphi(Y(t),J(t))] \ d t & \geq c \int_{t_0}^{t_1} \varphi(y + t, +1) \ dt= c \int_{y_1}^{(1+\delta) y_1} \varphi(z,+1) \ d z.
\end{align*}
$\hfill \diamond$

\emph{Claim (ii): There exists a constant $c > 0$ such that for $\varphi \in B_b(E)$, $\varphi \geq 0$, and  $y \in [-y_1, y_1]$,~\eqref{eq:petite} holds for $j = -1$.}

\emph{Proof of Claim (ii):} Let $j = -1$. We know by Assumption~\ref{ass:construction}(ii), that $\lambda(z, -1)$ is bounded from below for $z \leq -y_1$ by $\lambda_{\min}$. Heuristically, in order to obtain a uniformly positive probability of switching to the $+$-replica, we need to spend at least a certain amount of time, $\tau > 0$ say, in the region $(-\infty, -y_1]$. For definiteness, let $ \tau \in (0, 2 y_1)$. Hence from a given $y \in [-y_1, y_1]$ we will travel for a certain amount of time $t_0(y) := y_1 + y$ until we reach $-y_1$, and then continue moving in the negative direction up to time $t_1(y) := t_0(y) + \tau$. We will then have to move back in the positive direction from $-y_1 - \tau$ until reaching $y_1(1+\delta)$. The maximum amount of time required is obtained if we start from $y = + y_1$, which results in a value $t_{\max} := (4 + \delta) y_1  + 2 \tau$.

Let 
\[ \lambda_{\max} := \sup_{0 \leq t \leq t_{\max}} \lambda(y-t,-1) \vee \lambda(y + t, +1) \vee \lambda(y-t,+1) \vee \lambda(y +t, +1),\]
a crude but effective upper bound for the switching rate in either replica. Define \[c := \lambda_{\min} \exp(-(\lambda_{\max} + 1) t_{\max}) \tau.\]
Let $\varphi \in B_b(E)$, $\varphi \geq 0$, $y \in [-y_1,y_1]$ and $0 \leq t \leq t_{\max}$. Then
\begin{equation}
\label{eq:estimate_one_switch} 
\begin{aligned} 
& \E_{y,j} [ \exp(-t) \varphi(Y(t),J(t)) \1_{\{T_1 \leq t, T_2 > t\}}]  \\
& = \E_{y,j} \left[\exp(-t) \int_0^t \varphi(y - s + (t-s),+1) \1_{\{T_1 \in d s , T_2 > t\}} \right]  \\
& = \exp(-t) \int_0^t \varphi(y - 2 s + t,+1) \lambda(y - s, -1) F(s;y,-1) F(t-s; y - s, +1) \ d s \\
& \geq \int_{t_0(y)}^t \varphi(y - 2 s + t,+1) \lambda_{\min} \exp(-(\lambda_{\max}+1)t) \ d s  =\frac{c }{\tau} \int_{t_0(y)}^t \varphi(y - 2 s + t,+1)  \ d s.
\end{aligned}
\end{equation}
We have
\begin{align*}
 & \int_0^{\infty} \E_{y,j} [ \exp(-t) \varphi(Y(t),J(t))] \ d t \\
 & \geq \int_0^{t_{\max}}  \E_{y,j} [ \exp(-t) \varphi(Y(t),J(t)) \1_{\{T_1 \leq t, T_2 > t\}}] \ dt \\
 & \geq c /\tau \int_0^{t_{\max}}  \int_{t_0(y)}^t \varphi(y-2 s + t, +1) \ d s \ d t  & \mbox{(by \eqref{eq:estimate_one_switch})}  \\
 & \geq c /\tau \int_{t_0(y)}^{t_1(y)} \int_{t_1(y)}^{t_{\max}}  \varphi(y-2s + t, +1) \ d s \ d t & \mbox{(Fubini, reduced integration area)} \\
& \geq c /\tau \int_{t_0(y)}^{t_1(y)} \int_{y_1-y + 2 s}^{(1+\delta)y_1 - y + 2 s}  \varphi(y-2s + t, +1) \ d t \ d s & \mbox{($\star$)} \\
 & = c /\tau \int_{t_0(y)}^{t_1(y)} \int_{y_1}^{(1+\delta) y_1} \varphi(z, +1) \ d z \ d s = c \int_{y_1}^{(1+\delta)y_1} \varphi(z, +1)  \ d z & \mbox{($z = y-2s +t$)}
\end{align*}
In the step labelled ($\star$) we have reduced the integration area: For $s \geq t_0(y)$, since $\tau \leq 2 y_1$,
\[ y_1 - y + 2s \geq y_1 - y+2 t_0(y) = 3 y_1 + y \geq y_1 + y + \tau = t_1(y),\]
and for $s \leq t_1(y)$,
\[ (1+\delta)y_1 - y + 2 s \leq (1 + \delta) y_1 - y + 2 (y + y_1) + 2 \tau  \leq (4 + \delta)y_1 + 2 \tau = t_{\max}. \]
This establishes the claim. $\hfill \diamond$

By taking the minimum over the constants $c$ obtained in Claims (i) and (ii), the inequality~\eqref{eq:petite} follows for all $(y,j) \in C$.
\end{proof}

\subsubsection{Foster-Lyapunov function}

The following lemma, in particular the choice of the Lyapunov function $V$, is based on the proof of \cite[Proposition 2.8]{Fontbona2015}.

\begin{lemma}[Existence of a Foster-Lyapunov function]
\label{lem:lyapunov}
Suppose Assumption~\ref{ass:lyapunov-improved} holds. Then there are constants $c > 0$ and $y_1>0$ and a continuously differentiable function $V: E \rightarrow (0,\infty)$ such that $V(y,j) \rightarrow \infty$ as $|y| \rightarrow \infty$, and $ L V(y,j) \leq - c V(y,j)$  for $(y,j) \in E$ with $|y| > y_1$.
\end{lemma}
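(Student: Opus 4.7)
The plan is to look for $V$ of the symmetric product form
\begin{equation*}
V(y,j) = \begin{cases} e^{\kappa y}(1 + \gamma j), & y \geq y_1, \\ e^{-\kappa y}(1 - \gamma j), & y \leq -y_1, \end{cases}
\end{equation*}
with $y_1 \geq y_0$, $\kappa > 0$, and $\gamma \in (0,1)$ to be chosen below, and to extend $V$ across $|y| < y_1$ by any strictly positive $C^1$ interpolation matching the boundary data at $y = \pm y_1$; such an interpolation exists because the prescribed boundary values $e^{\kappa y_1}(1 \pm \gamma)$ are strictly positive. Clearly $V(y,j) \to \infty$ as $|y| \to \infty$, so only the drift condition $LV \leq -cV$ for $|y| > y_1$ remains to be verified.

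A direct computation from~\eqref{eq:abstract-generator} (with $a = 1$) gives, on the right tail $y \geq y_1$,
\begin{equation*}
LV(y,+1) = e^{\kappa y}\bigl[\kappa(1+\gamma) - 2\gamma\lambda^+(y)\bigr], \qquad LV(y,-1) = e^{\kappa y}\bigl[-\kappa(1-\gamma) + 2\gamma\lambda^-(y)\bigr],
\end{equation*}
with an analogous pair on the left tail by the symmetry $(y,j) \leftrightarrow (-y,-j)$. Writing $\lambda^+_{\min} := \inf_{y \geq y_0} \lambda^+(y)$, $\lambda^-_{\max} := \sup_{y \geq y_0} \lambda^-(y)$, and the analogous $\widetilde\lambda^-_{\min}$, $\widetilde\lambda^+_{\max}$ on $(-\infty, -y_0]$, the target inequality $LV \leq -cV$ on the tails reduces (via the trivial bounds $\lambda^+ \geq \lambda^+_{\min}$ and $\lambda^- \leq \lambda^-_{\max}$, and their counterparts on the left) to the four linear conditions
\begin{equation*}
\kappa + c \leq \frac{2\gamma \lambda^+_{\min}}{1+\gamma} \wedge \frac{2\gamma \widetilde\lambda^-_{\min}}{1+\gamma}, \qquad \kappa - c \geq \frac{2\gamma \lambda^-_{\max}}{1-\gamma} \vee \frac{2\gamma \widetilde\lambda^+_{\max}}{1-\gamma}.
\end{equation*}

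The final step is the feasibility of this system. A solution with $c > 0$ exists provided $\gamma$ is strictly below both of the thresholds $(\lambda^+_{\min} - \lambda^-_{\max})/(\lambda^+_{\min} + \lambda^-_{\max})$ and $(\widetilde\lambda^-_{\min} - \widetilde\lambda^+_{\max})/(\widetilde\lambda^-_{\min} + \widetilde\lambda^+_{\max})$, both of which are strictly positive precisely by Assumption~\ref{ass:lyapunov-improved}. Fixing any such $\gamma$ and then choosing $(\kappa, c)$ anywhere in the resulting nonempty window (for instance $\kappa - c$ just above the right-hand maximum and $\kappa + c$ just below the left-hand minimum) yields parameters that simultaneously control both tails. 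I expect the main, and essentially only, obstacle to be precisely this simultaneous balancing of left and right tails through a single pair $(\kappa, c)$, which the separable ansatz and the uniform two-sided gap condition reduce to the elementary inequality above; the $C^1$ interpolation on $[-y_1, y_1]$ is then routine.
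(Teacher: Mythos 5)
Your ansatz is, up to reparametrisation, exactly the paper's: the paper takes $V(y,j)=\exp(\alpha^+ y+\beta^+ j)$ on the right tail, and writing $e^{2\beta^+}=(1+\gamma)/(1-\gamma)$ turns this into your $e^{\kappa y}(1+\gamma j)$ times a harmless constant. Your computation of $LV$ on each tail and the reduction to the inequalities $\kappa+c\leq 2\gamma\lambda^+_{\min}/(1+\gamma)$ and $\kappa-c\geq 2\gamma\lambda^-_{\max}/(1-\gamma)$ (and their left-tail analogues) are correct, as is the routine positive $C^1$ interpolation across $[-y_1,y_1]$.

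The gap is in the feasibility step, which you yourself identify as the crux. The system $\kappa+c\leq a_1\wedge a_2$, $\kappa-c\geq b_1\vee b_2$ requires \emph{all four} inequalities $b_i<a_j$, but taking $\gamma$ below your two thresholds only delivers the diagonal ones $b_1<a_1$ and $b_2<a_2$; the cross conditions are not implied by Assumption~\ref{ass:lyapunov-improved}. Concretely, take $\lambda^+_{\min}=10$, $\lambda^-_{\max}=1$ on the right tail and $\widetilde\lambda^-_{\min}=0.2$, $\widetilde\lambda^+_{\max}=0.1$ on the left: both assumptions hold and both thresholds are positive, yet $b_1=2\gamma\cdot 1/(1-\gamma)>2\gamma\cdot 0.2/(1+\gamma)=a_2$ for every $\gamma\in(0,1)$, so no admissible $(\kappa,c)$ exists (this persists even if you allow a different $c$ on each tail, since you would still need a single $\kappa$ in $(b_1,a_1)\cap(b_2,a_2)=\emptyset$). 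The source of the problem is that you impose one pair $(\kappa,\gamma)$ on both tails, whereas the drift condition for $|y|>y_1$ is entirely local and the two tails decouple. The fix is exactly what the paper does: choose separate parameters $(\kappa^+,\gamma^+)$ for $y\geq y_1$ and $(\kappa^-,\gamma^-)$ for $y\leq -y_1$ (the paper's $(\alpha^\pm,\beta^\pm)$), obtain constants $c^+$ and $c^-$ from each tail's own two-inequality system (which is feasible precisely under your per-tail thresholds), and set $c:=c^+\wedge c^-$. With that modification your argument goes through and coincides with the paper's proof.
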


\begin{proof}
Define 
\begin{align*}
m^+ & := \sup_{y \geq y_0} \lambda^-(y), \quad  & M^+ & := \inf_{y \geq y_0} \lambda^+(y), \\
m^- & := \sup_{y \leq - y_0} \lambda^+(y), \quad & M^- & := \inf_{y \leq - y_0} \lambda^-(y).
\end{align*}

By Assumption~\ref{ass:lyapunov-improved}, $M^+ > m^+$ and $M^- > m^-$. In particular there exist constants $\beta^+ > 0$, $\beta^- > 0$ such that $m^+ \exp(2 \beta^+) < M^+$ and $m^- \exp(2 \beta^-) < M^-$.
It follows that 
\[  m^{\pm}\left(\exp(2 \beta^{\pm}) - 1 \right) = m^{\pm} \exp(2 \beta^{\pm}) \left(1 - \exp(-2 \beta^{\pm}) \right)    <  M^{\pm} \left(1 - \exp(-2 \beta^{\pm}) \right).\]
Therefore we can pick positive constants $\alpha^{\pm} \in \left( m^{\pm} \left(\exp(2 \beta^{\pm}) - 1 \right), M^{\pm} \left(1 - \exp(-2 \beta^{\pm}) \right) \right)$. 
Let $y_1 \geq y_0$ be undefined for now. As a Lyapunov function we take a function $V$ such that, outside of $(-y_1,+y_1)$, and for $j \in \{-1,+1\}$,
\begin{equation} \label{eq:lyapunov_function} V(y,j) = \begin{cases} \exp( \alpha^+ y + \beta^+ \sign(j)), & \mbox{if} \ y \geq y_1, \\
             \exp(- \alpha^- y - \beta^- \sign(j)), & \mbox{if} \ y \leq - y_1.
            \end{cases}\end{equation}
and such that $V$ is positive and continuously differentiable on $(-y_1, +y_1)$. As long as $y_1 \geq y_0$ is taken sufficiently large then $V$ thus defined is positive and continuously differentiable on $E$.
Now on $y \geq y_1$, we have
\begin{align*} L V(y, +1) & = \left(\alpha^+ - \lambda^+(y) \left( 1-  \exp(- 2 \beta^+) \right) \right) V(y,+1), \\
 LV(y,-1) & = \left( - \alpha^+ + \lambda^-(y) \left( \exp(2 \beta^+) - 1 \right) \right)V(y,-1).
\end{align*}
By the choice of $\alpha^{\pm}$, we have
\begin{align*} \alpha^+ - \lambda^+(y) \left(1- \exp(- 2 \beta^+)\right) & \leq \alpha^+ - M^+ \left(1- \exp(- 2 \beta^+) \right)< 0 \quad \mbox{and} \\
- \alpha^+ + \lambda^-(y) \left( \exp(2 \beta^+) - 1 \right) & \leq -\alpha^+ + m^+ \left( \exp(2 \beta^+) - 1 \right) < 0. 
\end{align*}
It follows that there exists a constant $c^+ > 0$ such that $LV(y,j) \leq - c^+ V(y,j)$ for $y \geq y_1$ and $j \in \{-1,+1\}$. Analogously, there exists a constant $c^- > 0$ such that $LV(y,j) \leq - c^- V(y,j)$ for $y \leq - y_1$ and $j \in \{-1,+1\}$. The proof is completed by taking $c := c^- \wedge c^+$.
\end{proof}

\section*{Acknowledgements}
We acknowledge the suggestions for improvement of two anonymous referees and the associate editor, which have significantly contributed towards the accuracy and clarity of this work.

\end{document}